\documentclass[11pt,a4paper,reqno]{amsart}

\usepackage{amscd}
\usepackage{amssymb}
\usepackage{amsthm}
\usepackage{graphicx}
\usepackage{color}
\usepackage[all]{xy}
\usepackage{mathrsfs}
\usepackage{marvosym}
\usepackage{scalerel,amssymb}
\usepackage{hyperref}

\newtheorem{theorem}{Theorem}[section]
\newtheorem{lemma}[theorem]{Lemma}

\newtheorem{conjecture}[theorem]{Conjecture}

\newtheorem{example}[theorem]{Example}
\newtheorem{question}[theorem]{Question}

\theoremstyle{remark}
\newtheorem{remark}[theorem]{Remark}
\numberwithin{equation}{section}
\numberwithin{figure}{section}

\newcommand{\ZZ} {\mathbb{Z}}

\newcommand{\RR} {\mathbb{R}}
\newcommand{\CC} {\mathbb{C}}
\newcommand{\PP} {\mathbb{P}}
\newcommand{\VV} {\mathbb{V}}

\newcommand {\cN}  {\mathcal{N}}
\newcommand {\fop}  {\mathfrak{p}}
\newcommand{\cP}{\mathcal{P}}

\newcommand {\Hom}  {\operatorname{Hom}}
\newcommand {\Spec} {\operatorname{Spec}}
\newcommand{\Rea} {\mathrm{Re}}
\newcommand{\Ima} {\mathrm{Im}}
\newcommand{\hooklongrightarrow}{\lhook\joinrel\longrightarrow}

\begin{document}

\title[Holomorphic Floer theory and Donaldson-Thomas invariants]{Holomorphic Floer theory and Donaldson--Thomas invariants}

\author[P.\,Bousseau]{Pierrick Bousseau}
\address{University of Georgia, Department of Mathematics, Athens, GA 30605}
\email{Pierrick.Bousseau@uga.edu}

\date{}

\begin{abstract}
We present several expected properties of the holomorphic Floer theory of a holomorphic symplectic manifold. 
In particular, we propose a conjecture relating holomorphic Floer theory of Hitchin integrable systems and Donaldson-Thomas invariants of non-compact Calabi-Yau 3-folds. More generally, we conjecture that the BPS spectrum of a 4-dimensional
$\mathcal{N}=2$ 
quantum field theory can be recovered from the holomorphic Floer theory of the corresponding Seiberg-Witten integrable system.
\end{abstract}

\maketitle

\setcounter{tocdepth}{1}
\tableofcontents
\setcounter{section}{-1}
\section{Introduction}
The goal of this paper is to present an overview of the expected formal aspects of holomorphic Floer theory of 
holomorphic symplectic manifolds and to propose a conjectural relation between holomorphic Floer theory and Donaldson-Thomas (DT) theory.

\subsection{Holomorphic Floer theory}
\label{sec:intro_HFT}
Holomorphic Floer theory is an example of infinite-dimensional complex Morse theory. We first review the more familiar case of Floer theory as an example of infinite-dimensional Morse theory.
\subsubsection{Floer theory as Morse theory}
Floer theory of a symplectic manifold 
$(M,\omega)$ is formally the result of applying Morse theory to the 
infinite-dimensional manifold $\mathcal{P}$ of paths between pairs of Lagrangian submanifolds $L_1,L_2 \subset M$, endowed with the action functional 
\begin{align*} f: \mathcal{P} &\longrightarrow \RR \\
\mathfrak{p}&\longmapsto -\int_{D_\mathfrak{p}} \omega\,,
\end{align*}
where $D_\mathfrak{p}$ is the surface in $M$ formed by a one-parameter family of paths from a reference path $\mathfrak{p}_0$ to the path $\mathfrak{p}$ \cite{Flo1}.
Critical points of $f$ are constant paths located at the intersection points $L_1 \cap L_2$.
Moreover, the choice of a $\omega$-compatible almost complex structure $J$ on $M$ defines a Riemannian metric on $\mathcal{P}$, and the corresponding
gradient flow lines of $f$ are $J$-holomorphic disks in $M$ with boundary on $L_1 \cup L_2$.
Hence, the Morse complex of $f$ is the Floer complex $CF(L_1,L_2)$ generated by the intersection points $L_1 \cap L_2$ and whose differential is given by counts of $J$-holomorphic disks. 
More generally, counting $J$-holomorphic disks with boundary on finite unions of Lagrangian submanifolds leads to the definition of the Fukaya category of 
$(M,\omega)$, whose objects are Lagrangian submanifolds and spaces of morphisms are Floer homology groups \cite{Fukcat}.

\subsubsection{Complex Morse theory}
Whereas Morse theory deals with a real-valued smooth function on a Riemannian manifold, complex Morse theory considers a holomorphic function on a Kähler manifold. 
This includes the Picard-Lefschetz theory of critical points and vanishing cycles of holomorphic functions, and also its categorification given by the Fukaya-Seidel category \cite{Seid1, Seid2}.

The original definition of the Fukaya-Seidel category is based on the geometry of Lefschetz thimbles, but we will use a conjecturally equivalent alternative definition due to  Haydys \cite{Hayd} and Gaiotto-Moore-Witten \cite{GMWinfrared}\footnote{Haydys's proposal for a definition of the Fukaya-Seidel category was made rigorous very recently by Donghao Wang \cite{FS_rig}. }. 
Given a Kähler manifold $\mathcal{P}$ and a holomorphic Morse function
\[ W \colon 
\mathcal{P} \longrightarrow \CC\,,\]
this definition is based on counts of gradient flow lines of the real Morse functions $\mathrm{Re}(e^{-i\theta} W)$ connecting pairs of critical points, and on counts of solutions to a $W$-perturbed holomorphic curve equation asymptotically approaching gradient flow lines.
In physics, a real-valued Morse function on a Riemannian manifold defines a $\mathcal{N}=2$
supersymmetric quantum mechanics, whose space of vacua is Morse homology \cite{WMorse}, whereas a holomorphic Morse function on a Kähler manifold defines a Landau-Ginzburg
2-dimensional $\mathcal{N}=(2,2)$ supersymmetric quantum field theory, whose category of boundary conditions is the Fukaya-Seidel category \cite{GMWinfrared}.

\subsubsection{Holomorphic Floer theory as complex Morse theory}
The main advantage of the construction of the Fukaya-Seidel category in terms of counts of gradient flow lines and
$W$-perturbed holomorphic curves is that it can be formally applied to infinite dimensional set-ups \cite{Hayd}. Known examples reviewed in more details in 
\S \ref{sec_inf_ex}
include the complex Chern-Simons theory for complex connections on a 3-manifold 
\cite{Hayd, Wanalytic}\cite[\S 4.2]{khan2021categorical} and the holomorphic Chern-Simons theory for $\overline{\partial}$-connections on Calabi-Yau 3-folds \cite{DTgauge, DS, Hayd}.

Holomorphic Floer theory is another example, obtained by taking for $\mathcal{P}$
the space of paths between two holomorphic Lagrangian submanifolds $L_1$ and $L_2$ in a holomorphic symplectic manifold $(M, \Omega)$ and for $W$ the holomorphic action functional 
\begin{align*}
W : \mathcal{P}& \longrightarrow \CC \\
 \mathfrak{p} &\longmapsto - \int_{D_\mathfrak{p}} \Omega
\end{align*}
obtained by integrating the holomorphic symplectic form 
$\Omega$.
In this case, critical points are intersection points of $L_1$ and $L_2$, 
gradient flow lines are holomorphic curves in $M$ for hyperkähler rotated complex structures and the $W$-perturbed holomorphic curve equation is a 3d Fueter equation for maps $\RR^2 \times [0,1] \rightarrow M$, see \S \ref{sec:details_HFT} for details. 

Applying formally the constructions of \cite{GMWinfrared}, one should be able to define:
\begin{enumerate}
\item a vector space $H(p,p')$ for every pair 
of intersection points $p,p' \in L_1 \cap L_2$, obtained as the cohomology of a complex generated by holomorphic curves and with differential given by counts of Fueter maps. 
\item a $A_\infty$-category 
$C(L_1,L_2)$ obtained as the ``Fukaya-Seidel category of $(\mathcal{P},W)$". 
\end{enumerate}
The collection of all holomorphic Lagrangian submanifolds in 
$(M,\Omega)$ should form a $2$-category\footnote{More precisely a stable $(\infty,2)$-category.} $\mathrm{Ft}(M,\Omega)$, that we refer to as the 
\emph{Fueter 2-category} of $(M, \Omega)$, with the categories $C(L_1,L_2)$ as Hom-categories:
\[ C(L_1,L_2)=\Hom_{\mathrm{Ft}(M, \Omega)}(L_1,L_2) \,.\]
The holomorphic action functional is in general multivalued, and so considering a cover of the space of paths $\mathcal{P}$
is necessary in general for the definition of the vector spaces $H(p,p')$ and of the categories $C(L_1,L_2)$. This point is essential in the formulation of the relation with Donaldson-Thomas invariants in the following section.

As reviewed in more details in \S\ref{sec:related_works}, various aspects of holomorphic Floer theory are also discussed by Doan-Rezchikov \cite{DR}, 
Khan \cite{khan2021categorical} and Kontsevich-Soibelman \cite{KS_HFT}.
The main contribution of the present paper is to formulate several expected properties of holomorphic Floer theory, and in particular its conjectural relation with DT theory. 

\subsection{DT theory and holomorphic Floer theory}
\label{sec:dt_intro}

\subsubsection{DT theory and complex Morse theory}
DT invariants are counts of stable objects in a 3-dimensional Calabi--Yau triangulated category, such as for example the derived category of coherent sheaves or the Fukaya category of a Calabi-Yau 3-fold. As a function of the stability, DT invariants jump across real codimension one loci in a way determined by a universal wall-crossing formula \cite{JS, kontsevich2008stability}. 

Many of the algebraic structures underlying DT invariants and their wall-crossing are formally similar to the ones appearing in complex Morse theory for a holomorphic Morse function on a Kähler manifold. For example, the wall-crossing formula can be viewed as an iso-Stokes condition \cite{ BTL, Jhol}, and the DT invariants naturally define Riemann-Hilbert problems and geometric structures formally similar to Frobenius manifold structures \cite{BRH, BgeomDT}. In this analogy, DT invariants play the role of counts of gradient flow lines in complex Morse theory.

Whereas complex Morse theory of a holomorphic Morse function with $n$ critical points leads to algebraic structures based on the finite-dimensional Lie algebra
$\mathfrak{gl}_n$ of $n \times n$ matrices, the algebraic structures coming from DT theory are based on the infinite-dimensional Lie algebra $\mathfrak{g}_{DT}$ of vector fields preserving a holomorphic symplectic form on a complex torus. 
Nevertheless, it is natural to ask if this relation between DT theory and complex Morse theory theory is deeper than a mere analogy:

\begin{question} \label{q1}
Is DT theory actually an example of complex Morse theory? 
\end{question}
\vspace{0.1cm}
\noindent
Answering Question \ref{q1} means finding a Kähler manifold $\mathcal{P}$ with a holomorphic function
$W \colon \mathcal{P} \rightarrow \CC$ such that a formal application of complex Morse theory to $(\mathcal{P},W)$ reproduces DT theory. 
In this context, a natural puzzle is to understand the origin of the Lie algebra $\mathfrak{g}_{DT}$: as $\mathfrak{g}_{DT} \neq \mathfrak{gl}_n$, a necessary condition is that $W$ has infinitely many critical points.  

Question \ref{q1} admits a natural reformulation in terms of physics. DT invariants can often be interpreted as counts of BPS states in  
4-dimensional 
$\mathcal{N}=2$ supersymmetric
quantum field theories or string theory compactifications. On the other hand, a holomorphic Morse function on a Kähler manifold
defines a Landau-Ginzburg  
2-dimensional $\mathcal{N}=(2,2)$ quantum field theory. In this context, the analogy \cite{GMN1} is that the wall-crossing formula for the BPS states of 4-dimensional $\mathcal{N}=2$ theories is formally similar to the Cecotti-Vafa wall-crossing formula for the BPS states in 2-dimensional 
$\mathcal{N}=(2,2)$
theories \cite{CV}. Therefore, answering Question \ref{q1} seems to require the construction of an appropriate 2-dimensional $\mathcal{N}=(2,2)$ theory from a 4-dimensional $\mathcal{N}=2$ theory.

\subsubsection{DT invariants and holomorphic Floer theory of complex integrable systems}
\label{sec_intro_dt}

We will argue that holomorphic Floer theory gives a positive answer to Question \ref{q1} in the particular case of DT invariants describing the BPS spectrum of a $4$-dimensional $\mathcal{N}=2$ field theory (without dynamical gravity). This includes for example string compactifications on non-compact Calabi-Yau 3-folds, but not string compactifications on compact Calabi--Yau 3-folds.

Let $\mathcal{T}$ be a 4-dimensional 
$\mathcal{N}=2$
field theory. The low energy physics of $\mathcal{T}$ is controlled by its Seiberg-Witten complex integrable system 
\[ \pi \colon M \longrightarrow B\,.\] 
Here $B$ is the Coulomb branch of $\mathcal{T}$ on $\RR^4$ and $M$ is the Coulomb branch of 
$\mathcal{T}$ on $\RR^3 \times S^1$. The space $M$ is hyperkähler and  the holomorphic symplectic form $\Omega_I$ for the complex structure $I$ in which $\pi$ is holomorphic. The fibers $F_b:= \pi^{-1}(b)$ over points $b \in B$ in the complement of the discriminant locus $\Delta$ of $\pi$
are abelian varieties and holomorphic Lagrangian submanifolds of $(M,\Omega_I)$.
The lattice of charges for states in the vacuum $b \in B \setminus \Delta$ is 
\[ \Gamma_b := \pi_2(M,F_b) \]
and for every $\gamma \in \Gamma_b$, we have a space 
$BPS_\gamma^b$ of BPS states of charge $\gamma$
\cite{MR1293681, MR1306869, SW_3d, GMN1, GMN2}.

In many cases, there is an associated 3-dimensional Calabi-Yau (CY3) triangulated category $\mathcal{C}$, the base 
$B \setminus \Delta$ maps to the space of Bridgeland stability conditions on $\mathcal{C}$ \cite{MR2373143}, the central charge of the stability corresponding to a point $b\in B\setminus \Delta$ is given by 
\[ Z_\gamma(b)=\int_\gamma \Omega_I \,,\]
and the spaces of BPS states $BPS_\gamma^b$
are mathematically realized as a cohomological DT invariants of $\mathcal{C}$ counting $b$-stable objects of class $\gamma$. A concrete example is provided by class $\mathcal{S}$ theories of type $A_1$ defined by a Riemann surface $C$ (possibly with punctures)\cite{GMN1, GMN2}. In this case, $M$ is a moduli space of 
$SL(2,\CC)$ Higgs bundles on $C$, $B$ is a space of quadratic differentials on $C$, and $\pi$ is the Hitchin map. Moreover,  $\mathcal{C}$
is a 3-dimensional Calabi-Yau triangulated category described by a quiver with potential defined in terms of ideal triangulations of $C$ \cite{BS}, and can also be viewed as the Fukaya category of a non-compact Calabi-Yau 3-fold constructed from $C$ \cite{Smith1}.

Our main claim is that the DT theory of $\mathcal{C}$ for the stability $u$ is equivalent to the holomorphic Floer theory of $M$ for the pair of holomorphic Lagrangian submanifolds given by the torus fiber $F_b :=\pi^{-1}(b)$ and a section $S$ of $\pi$.
Physically, $S \subset M$ is a holomorphic Lagrangian section of 
$\pi$ defined by putting $\mathcal{T}$ on a cigar geometry, see \S \ref{sec_main_hft_conj} for details. For example, $S$ is the Hitchin section when $M$ is a moduli space of Higgs bundles.
In other words, our proposed answer to Question \ref{q1} is positive in this setting and one should recover DT theory by applying complex Morse theory to the infinite dimensional Kähler manifold
$\mathcal{P}$ of paths between $S$ and $F_b$, equipped with the holomorphic action functional $W: \mathfrak{p} \mapsto -\int_{D_{\mathfrak{p}}} \Omega_I$.

As $S$ is a section of $\pi$ and $F_b$ is a fiber, the intersection 
$S \cap F_b$ consists of a single point. However,
$\mathcal{P}$ is not simply connected: as we will show in \S\ref{sec_main_hft_conj}, we have 
\[ \pi_1(\mathcal{P}) \simeq \pi_2(M,F_b)= \Gamma_b \,.\]
The holomorphic action functional $W$ becomes a single-valued function on the universal cover $\widetilde{\cP}$ of $\cP$, and has infinitely many critical points which form a torsor under $\Gamma_b$. 
Fixing a reference critical point $0$, we identify the set of critical points with $\Gamma_b$. We can now state our main conjecture relating DT theory of the CY3 category 
$\mathcal{C}$ and the holomorphic Floer theory of the Seiberg-Witten integrable system $(M,\Omega_I)$.

\begin{conjecture} \label{conj_intro}
For every $b \in B \setminus \Delta$ and $\gamma \in \Gamma_b$, the cohomological DT invariant $BPS_\gamma^b$ is isomorphic to the vector space $H(0,\gamma)$
attached by holomorphic Floer theory of the pair of holomorphic Lagrangian submanifolds $S$ and $F_b$ in $(M,\Omega_I)$
to the two critical points $0$ and $\gamma$
of the holomorphic action functional:
\[ BPS_\gamma^b \simeq H(0,\gamma) \,.\]
\end{conjecture}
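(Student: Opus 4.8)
The plan is to establish the isomorphism $BPS_\gamma^b \simeq H(0,\gamma)$ not by a direct comparison of the two constructions---which live in very different analytic worlds---but by showing that both families of vector spaces, as $b$ ranges over $B \setminus \Delta$, are governed by the same wall-crossing structure with identical spectral input. The first step is to pin down this shared spectral data. Lifting to the universal cover $\widetilde{\cP}$, the critical points of $W$ form a $\Gamma_b$-torsor, and the difference of critical values is the relative period $W(\gamma)-W(0)=\int_\gamma \Omega_I = Z_\gamma(b)$. Thus the Stokes rays $\arg Z_\gamma(b)$ of the holomorphic Morse function $W$ coincide exactly with the BPS rays of the Bridgeland stability condition attached to $b$. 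This identity of central charges is the geometric backbone of the conjecture and is an immediate consequence of the integrable-system description of $M$.

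The second step is to identify the gradient-flow data of $(\widetilde{\cP},W)$ with the combinatorial objects that compute DT invariants. For a generic phase $\theta$, the gradient flow lines of $\mathrm{Re}(e^{-i\theta}W)$ connecting $0$ to $\gamma$ should project under $\pi$ to trajectories on $B$ coinciding with the spectral networks of Gaiotto-Moore-Neitzke \cite{GMN1, GMN2}. The claim is then that the count of Fueter maps defining the differential on the complex computing $H(0,\gamma)$ reproduces, phase by phase, the count of BPS states of charge $\gamma$; in the class-$S$ $A_1$ case this should reduce to the known dictionary between WKB networks, ideal triangulations of $C$, and the quiver-with-potential description of $\mathcal{C}$ from \cite{BS}.

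The third step is to run the two wall-crossing formulas in parallel. As $b$ crosses a wall in $B \setminus \Delta$, the jump of $BPS_\gamma^b$ is governed by the Kontsevich-Soibelman formula valued in the Lie algebra $\mathfrak{g}_{DT}$, while the jump of $H(0,\gamma)$ is governed by the Cecotti-Vafa iso-Stokes wall-crossing \cite{CV} for the family of holomorphic Morse functions $W$. The heart of the argument is to show that these two formulas agree: both are encoded by the same automorphism of the algebra of functions on the complex torus, so that once the two sides are matched in a single chamber the agreement propagates throughout $B \setminus \Delta$. A convenient base chamber is one near a tame degeneration of the integrable system, such as a conformal or large-complex-structure limit, where both sides admit an explicit description.

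The main obstacle is analytic and foundational rather than structural: the vector spaces $H(0,\gamma)$ are so far only conjecturally defined, since the counts of Fueter maps require a compactness and transversality theory for the $3$d Fueter equation on maps $\RR^2 \times [0,1] \to M$ that is not yet available in this generality. Establishing that the differential squares to zero, controlling bubbling and the non-compactness caused by the multivaluedness of $W$, and matching the resulting $A_\infty$-structure of the Fueter $2$-category with the CY3 category $\mathcal{C}$ are each substantial problems; the recent rigorous constructions of Doan-Rezchikov and Wang \cite{DR, FS_rig} are the natural starting points but do not yet cover the noncompact hyperkähler targets arising from Seiberg-Witten integrable systems. Until this foundational layer is in place, the second and third steps can be carried out only at the level of the expected wall-crossing formalism.
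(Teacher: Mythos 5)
The paper does not (and cannot, at present) prove Conjecture \ref{conj_intro} mathematically; its own justification is the heuristic physics derivation of \S\ref{sec_derivation}, which is entirely different from your route. There, the theory $\mathcal{T}$ is placed on $\RR^2\times D$ with $D$ a cigar and the vacuum $b$ imposed at the boundary; the lowest-energy states of charge $\gamma$ are identified once with $BPS_\gamma^b$ (excitations localize at the tip, where the geometry looks like $\RR^4$), and once more, after shrinking first the circle and then the interval, with the space $H((p,0),(p,\gamma))$ of 2d BPS states of the Landau--Ginzburg model $(\cP,W)$ on the path space between $S$ and $F_b$ --- a single physical system computed in a UV and an IR description. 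Your strategy --- match central charges, then propagate an identification from one chamber to all of $B\setminus\Delta$ via agreement of wall-crossing formulas --- is genuinely different and closer in spirit to how such correspondences are actually proven (it is the structure of the quiver and local $\PP^2$ results cited after Conjecture \ref{conj_main}); your step 1 is indeed immediate from Lemma \ref{lem_pi_1} and the torsor structure of the critical locus.

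There are, however, two genuine gaps between what your argument could deliver and what the conjecture asserts. First and most serious: the wall-crossing formulas you invoke --- Kontsevich--Soibelman valued in $\mathfrak{g}_{DT}$, i.e.\ in automorphisms of the function algebra of a complex torus, and the Cecotti--Vafa iso-Stokes condition --- are constraints on \emph{numerical} invariants (BPS indices, Euler characteristics of the complexes). Propagation from a base chamber via these formulas can therefore establish at best equality of indices, whereas Conjecture \ref{conj_intro} asserts an isomorphism of vector spaces $BPS_\gamma^b\simeq H(0,\gamma)$; two families of complexes with matching Euler characteristics in every chamber need not have isomorphic cohomology. To address the actual statement, your step 3 must be run at the categorified level --- on the Floer side this is precisely the 2d categorical wall-crossing with twisted masses of Khan \cite[\S 3]{khan2021categorical}, which the paper flags in \S\ref{sec:related_works} as the relevant tool --- and the agreement of the two \emph{categorical} wall-crossing structures is then the entire difficulty, not a formal consequence of matching central charges. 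Second, the base-chamber matching is not available ``for free'': for a general Seiberg--Witten system there is no chamber in which $H(0,\gamma)$ admits an explicit description independent of the missing analytic foundations, so the step you defer to a tame degeneration is exactly the direct comparison of the two theories that your strategy was designed to avoid. (A minor point: the projections to $B$ of the relevant $J_\zeta$-holomorphic curves are attractor flow trees, Figure \ref{fig10}; spectral networks live on the surface $C$ in the class-$S$ setting and pertain to the surface-defect Conjecture \ref{conj_general_2d_4d}, not to the fibration $\pi\colon M\to B$.)
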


In \S \ref{sec_derivation}, we give a physics argument in favor of Conjecture 
\ref{conj_intro}. In particular,
we will obtain an answer to the physics reformulation
of Question \ref{q1}: the relevant
2d $\mathcal{N}=(2,2)$ theory is obtained by compactification of 
$\mathcal{T}$ on a cigar geometry with appropriate boundary conditions.
In \S\ref{sec_lines}, we present further conjectures involving the 
$A_\infty$-category 
\[ C(S,F_b)=\Hom_{\mathrm{Ft}(M,\Omega_I)}(S,F_b)\] 
attached by holomorphic Floer theory to the pair of
holomorphic Lagrangian submanifolds $S$ and $F_b$.

\subsection{Related works}
\label{sec:related_works}
While thinking about Question \ref{q1} in 2017, the author stumbled upon the idea to consider the Fukaya-Seidel of the holomorphic action functional and did the exercise to derive the corresponding Fueter equation.
Recently, Doan-Rezchikov \cite{DR} independently derived the 2-categorical picture of holomorphic Floer theory. More importantly, they started the analysis of the compactness properties of the Fueter equation. 
Any attempt to rigorously discuss the conjectures of our paper should probably start with their work.  

In his thesis
\cite[\S 1.1.1]{khan2021categorical} Khan also considered the holomorphic area functional and derived the Fueter equation as the
corresponding $\zeta$-instanton equation. Another important part of this work is a systematical study of the 2d categorical wall-crossing \cite{khan2020categorical}] and of the 2d categorical wall-crossing with ``twisted masses" \cite[\S 3]{khan2021categorical}, which applies in particular to multivalued Landau-Ginzburg potential. In particular, formal aspects of the 4d wall-crossing appear in this context. From the point of view of our Conjecture \ref{conj_intro}, this is not surprizing because we claim that the 4d wall-crossing is an example of 2d wall-crossing for the multivalued holomorphic action functional.  
In particular, applying \cite[\S 3]{khan2021categorical} to the holomorphic area functional should help to make more precise the algebraic structures involved in Conjecture 
\ref{conj_intro}.

The 2-categories attached to holomorphic symplectic varieties conjecturally enter in 3d mirror symmetry. In this context, the B-side is the Rozansky-Witten 2-category, based on the category of matrix factorizations. The A-side should be a gauged version of the Fueter category, based on the Fukaya-Seidel category \cite{gammage2022perverse2, gammage2022perverse}: concrete examples of the A-side in the literature rely on explicit algebraic models of the 2-category and it is an interesting problem to clarify the relation with the geometric model given by the Fueter 2-category. 

Finally, Kontsevich-Soibelman have an ongoing work on holomorphic Floer theory \cite{KS_HFT}, not focused on the 2-categorical aspects related to the Fueter equation, but rather on the exploration of the 1-categorical level in many directions that we do not touch upon, such as the relation with deformation quantization or a general theory of resurgence, see \S\ref{sec_rw}. 

\subsection{Plan of the paper}
After an introduction to finite-dimensional complex Morse theory in \S\ref{sec:fd_complex_morse}, we introduce holomorphic Floer theory in \S\ref{sec:details_HFT}. Conjectures relating holomorphic Floer theory and the structure of 2d $\mathcal{N}=(2,2)$ theories are discussed in \S\ref{sec_2d}. Finally, the relation between holomorphic Floer theory and DT invariants is the subject of \S\ref{sec_dt}. Whereas the conjectures in \S\ref{sec:details_HFT} are stated entirely mathematically, conjectures in \S\ref{sec_2d}-\S\ref{sec_dt} are stated in physical terms for maximal generality, even if they have mathematically well-formulated special cases.

\subsection{Acknowledgment}
I thank Tom Bridgeland and Richard Thomas for discussions at an early stage in 2017. I also thank the organizers of the String Math 2022 conference at the
University of Warsaw, 
July 11-15, 2022,
and of the MSRI Workshop New Four-Dimensional Gauge Theories, October 24-28 2022,
for invitation to give talks where this work was presented.

\section{Complex Morse theory}
\label{sec:fd_complex_morse}

As announced in \S \ref{sec:intro_HFT}, holomorphic Floer theory should be viewed as an example of infinite-dimensional 
complex Morse theory. In this section, we first briefly review aspects of finite-dimensional complex Morse theory. 


\subsection{Solitons and instantons}
Let $X$ be a Kähler manifold, with complex structure $J$ and Kähler form $\omega$, such that $c_1(X)=0$.
Let $W \colon X \rightarrow \CC$
be a holomorphic function which is Morse, that is with non-degenerate critical points, and which has distinct critical values. In this section, we assume that $\omega=d\lambda$ is exact and that the set of critical points of $W$ is finite. We will have to eventually relax these assumptions for the infinite-dimensional set-up of holomorphic Floer theory, see \S \ref{sec_cat-pairs_lag}.

Following Haydys \cite{Hayd} and Gaiotto-Moore-Witten \cite{GMWinfrared}, complex Morse theory for $(X,W)$ is based on the following two sets of equations, parametrized by a phase $\zeta \in \CC$, $|\zeta|=1$:
\begin{enumerate}
    \item for a path 
    \begin{align}
        u \colon \RR &\longrightarrow X \\
\nonumber        x &\longmapsto u(x) \,,
    \end{align}
the equation 
\begin{equation} \label{eq_soliton}
    \partial_x u + \mathrm{grad} \left(\Rea \left(\zeta^{-1}W(u)\right)\right)=0
\end{equation}
is the gradient flow line equation for the Morse function $\Rea (\zeta^{-1} W)$, or equivalently the Hamiltonian flow equation for $\Ima (\zeta^{-1} W)$. 
    \item for a plane
\begin{align}
    u \colon \RR^2 &\longrightarrow X \\
    \nonumber (t,x) &\longmapsto u(t,x) \,,
\end{align}
the equation 
\begin{equation} \label{eq_instanton}
    \partial_t u + J(u) \left(\partial_x u +  \mathrm{grad} \left(\Rea \left(\zeta^{-1} W(u)\right)\right) \right)=0
\end{equation}
is the holomorphic curve equation perturbed by the Hamiltonian $\Ima(\zeta^{-1} W)$. 
\end{enumerate}

Equations \eqref{eq_soliton} and \eqref{eq_instanton} are respectively the critical point equation and the gradient flow line equation for the functional 
\[ u \mapsto \int_\RR \left(u^* \lambda - \Ima(\zeta^{-1} W)dx \right)\]
on the space of paths $\mathrm{Maps}(\RR,X)=\{u \colon \RR \rightarrow X\} $. In \cite[\S 11.1]{GMWinfrared}, 
\eqref{eq_soliton} and \eqref{eq_instanton} are respectively called the \emph{$\zeta$-soliton equation} and the \emph{$\zeta$-instanton equation}\footnote{Our convention for 
$\zeta$ agrees with \cite{KKS}, differs from 
\cite{khan2020categorical} by a factor of $-1$,
and differs from 
\cite{GMWinfrared} by a factor of $i=\sqrt{-1}$.} of the 2-dimensional
$\mathcal{N}=(2,2)$
massive Landau-Ginzburg theory defined by $(X,W)$.\footnote{In the context of FJRW invariants, \eqref{eq_instanton} is sometimes called the Witten equation \cite{FJR}.} Following this terminology, we will use \emph{$\zeta$-soliton} for a solution of \eqref{eq_soliton} and \emph{$\zeta$-instanton} for a solution of \eqref{eq_instanton}.

\subsection{2d BPS states} \label{sec_2d_bps}
Let $\VV$ be the set of critical points of $W$.
Complex Morse theory attaches a vector space $H(p,p')$, referred to as the  \emph{space of 2d BPS states}, to every pair $p,p' \in \VV$ of distinct critical points. The 
vector space $H(p,p')$ should be defined as the cohomology of a complex $(C(p,p'),d)$, where 
\begin{enumerate}
    \item $C(p,p')$ is the vector space generated by 
    paths \begin{align*}
        u_i \colon \RR &\longrightarrow X \\
        x &\longmapsto u_i(x) \,,
    \end{align*} 
    considered up to $\RR$-translation, asymptotic to $p$ and $p'$,
    \[ \lim_{x \rightarrow -\infty} u_i(x)=p\,,\,\,
    \lim_{x \rightarrow +\infty} u_i(x) =p'\,,\]
    and which are $\zeta_{p,p'}$-solitons, that is solutions of the gradient flow line equation \eqref{eq_soliton} with 
    \[ \zeta:= \zeta_{p,p'}:=\frac{W(p)-W(p')}{|W(p)-W(p')|}\,.\]
    \item $d \colon C(p,p') \rightarrow C(p,p')$ is the differential defined by 
    \[ d u_i = \sum_j n_{ij} u_j \,,\]
    where $n_{ij}$ is a signed count of planes 
    \begin{align*}
    u \colon \RR^2 &\longrightarrow X \\
    (t,x) &\longmapsto u(t,x) \,,
        \end{align*}
    asymptotic to $u_i$ and $u_j$,
    \[ \lim_{t \rightarrow -\infty}u(t,x)=u_i(x)\,,\,\, 
    \lim_{t \rightarrow +\infty} u(t,x)=u_j(x)\,,\]
    and which are $\zeta_{p,p'}$-instantons, that is
    solutions of \eqref{eq_instanton} with $\zeta=\zeta_{p,p'}$, and considered up to the action of $\RR$ induced by translation of the $t$-variable, see Figure \ref{fig1}.
\end{enumerate}
In the terminology of \cite{GMWinfrared}, $H(p,p')$ is the space of 2d BPS states connecting the vacua $p$ and $p'$
of the 2-dimensional $\mathcal{N}=(2,2)$
massive Landau-Ginzburg theory defined by $(X,W)$.

\begin{figure}[ht!]
\centering
\includegraphics[width=50mm]{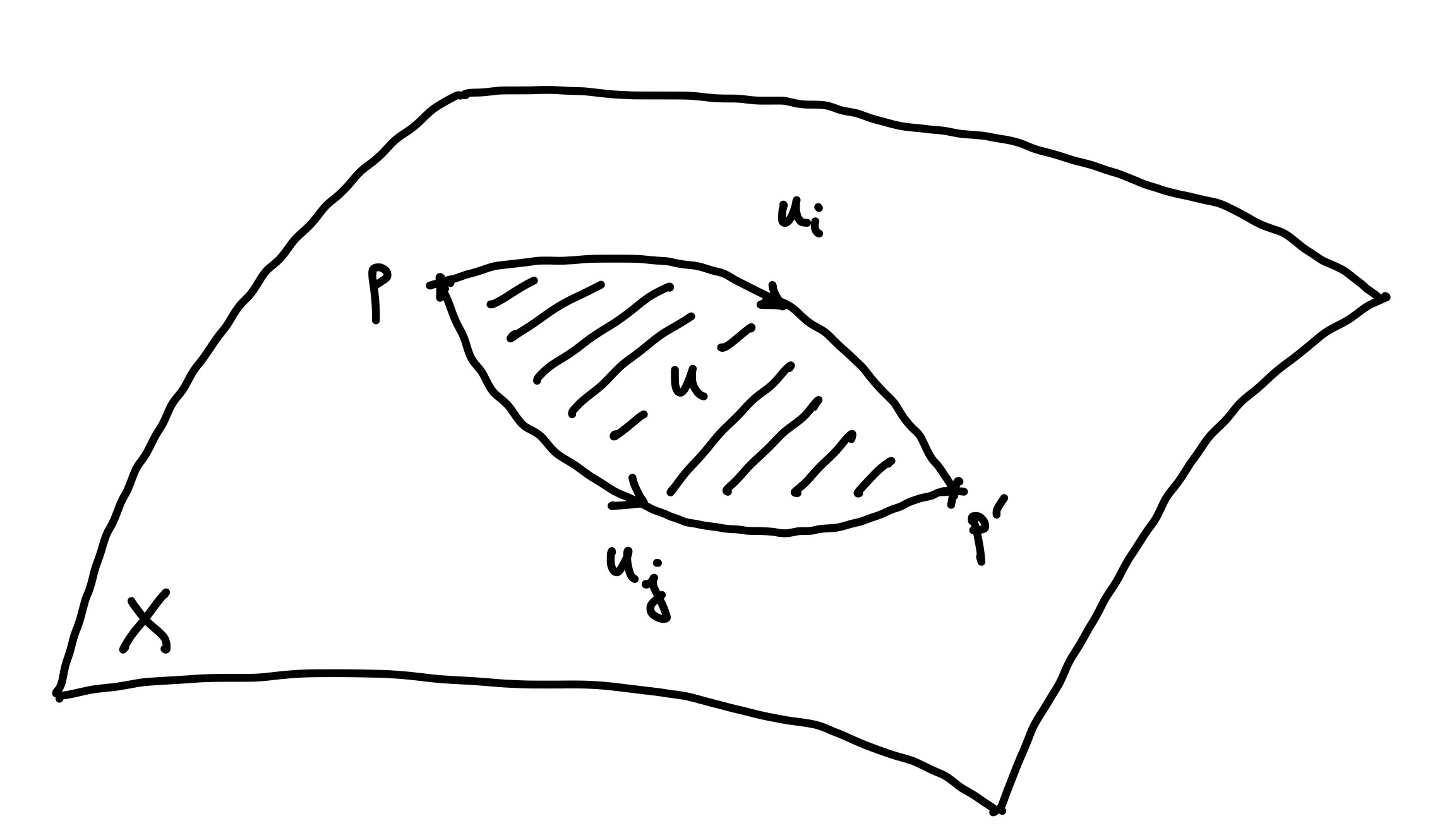}
\caption{Contribution to the differential of a $\zeta_{p,p'}$-instantons $u$ asymptotic to gradient flow lines $u_i$ and $u_j$ between critical points $p$ and $p'$. \label{fig1}}
\end{figure}

\subsection{Fukaya-Seidel category} \label{sec_FS_category}
Complex Morse theory also attaches a $A_\infty$-category $FS(X,W)$ to $(X,W)$, which is conjecturally equivalent to the Fukaya-Seidel category of $(X,W)$. We describe in a few words the web-based construction of Gaiotto-Moore-Witten \cite{GMWinfrared}, reformulated in the dual language of polygons
by Kapranov-Kontsevich-Soibelman \cite{KKS} (see also \cite{kapranov2020perverse}). 
We denote by $\CC_w$ the copy of $\CC$ where $W$ takes its values.
Let $\CC_w \cup S^1_\infty$ be the compactification of $\CC_w$
obtained by adding a circle of oriented directions at infinity. We fix a phase $\zeta \in \CC$, $
|\zeta|=1$, and we denote by $w_\infty \in S^1_\infty$ the corresponding point at infinity. 
We think of $w_\infty$ as being the limit $R \rightarrow +\infty$ of the point $R \zeta_\infty \in \CC_w$, with $R>0$.
Finally, we assume that $\zeta \neq \zeta_{p,p'}$
for every $p,p' \in \mathbb{V}$.

For every pair $p$ and $p'$ of distinct critical points, we define the complex
\begin{equation} 
\label{eq_complex}
R_{p,p'}:= \bigoplus_{Q \in \mathscr{Q}_{p,p'} } R_Q \,, \end{equation}
where the direct sum is over the set $\mathscr{Q}_{p,p'}$ 
of all the convex polygons $Q$ in $\CC_w \cup S^1_\infty$ with vertices in clockwise order given by $w_\infty, W(p), W(p_1), \dots, W(p_k), W(p')$, where $p_1, \dots, p_k \in \mathbb{V}$ are distinct critical points, also distinct from $p$ and $p'$, see Figure \ref{fig2}, and where $R_Q$ is the complex
\begin{equation}\label{eq_RQ}
R_Q := C(p,p_1)\otimes C(p_1,p_2) \otimes \cdots \otimes C(p_{k},p') \,.\end{equation} 
Moreover, for every critical point $p$, we set $R_{p,p}=\CC$ with trivial differential.

\begin{figure}[ht!]
\centering
\includegraphics[width=40mm]{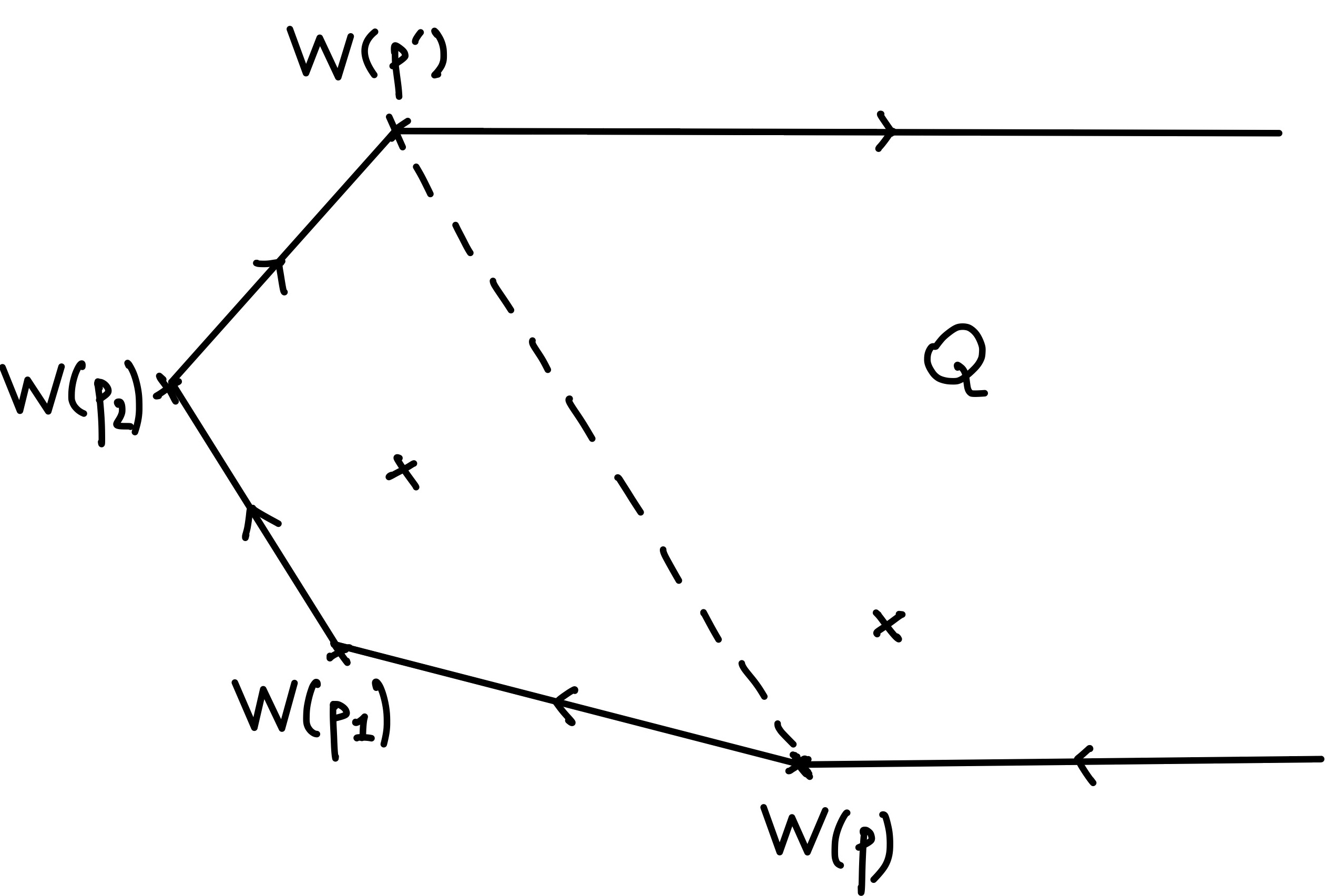}
\caption{A polygon $Q\in \mathscr{Q}_{p,p'}$ with $\zeta=1$. \label{fig2}}
\end{figure}

Let $R$ be the dg-category with objects $T_p(\zeta)$ indexed by 
the critical points $p$, Hom complexes given by 
\[ \Hom_R(T_p(\zeta),T_{p'}(\zeta))=R_{p,p'} \,,\]
and where the composition maps 
\begin{align*} R_{p,p'} \otimes R_{p',p''}& \longrightarrow R_{p,p''}  \\
a \otimes b& \longmapsto ab
\end{align*}
are defined as follows: for 
$Q \in \mathscr{Q}_{p,p'}$, $Q' \in \mathscr{Q}_{p',p''}$, $a \in R_Q$ and $b\in R_{Q'}$, $ab \in R_{Q \cup Q'}$ is the natural concatenation of $a$ and $b$ if the polygon $Q \cup Q'$ obtained by gluing $Q$ and $Q'$ along their common edge $[p',w_\infty]$ is convex, and $ab=0$ otherwise. In what follows, we view the
dg-category $R$ as an example of $A_\infty$-category with trivial higher operations $(m_k)_{k \geq 3}$. By construction, $\Hom_R(T_p(\zeta),T_p'(\zeta))=R_{p,p'}=0$ if the triangle with ordered vertices $w_\infty, W(p), W(p')$ is oriented counterclockwise, and so $R$ is an example of directed $A_\infty$-category.

To obtain a $A_\infty$-category with non-trivial higher operations, one considers the following counting problems. 
Let $\mathcal{P}$ be the set of convex polygons $P$ in $\CC_w$
with vertices in clockwise order given by 
$W(p_1), \dots, W(p_k)$, where $p_1, \dots, p_k \in \mathbb{V}$ are distinct critical points. Given such a polygon $P$, we define a map
\[ \mu_P \colon C(p_1,p_2) \otimes \cdots \otimes C(p_{k-1},p_k) 
\longrightarrow \ZZ \]
as follows: given $\zeta_{p_i,p_{i+1}}$-solitons generators $\alpha_i$ of $C(p_i,p_{i+1})$ for $1 \leq i \leq k-1$,
\[ \mu_P(\alpha_1, \dots,\alpha_{k-1})\] 
is a count of $\zeta$-instantons
\[ u \colon \CC \longrightarrow X \]
such that the restriction of $u$ to a direction perpendicular to $\RR_{\geq 0}(W(p_{i+1})-W(p_i))$ asymptotically approaches the $\zeta_{p_{i+1},p_i}$-soliton $\alpha_i$ rotated by a phase $\zeta \zeta_{p_i,p_{i+1}}^{-1}$. 

The main result of \cite{GMWinfrared} is that 
$\sum_{P \in \mathscr{P}} \mu_P$ naturally defines a Maurer-Cartan element for $R$, and so one can construct a new $A_\infty$-algebra $\tilde{R}$ by deforming $R$ by this Maurer-Cartan element.
Explicitly, higher $A_\infty$-operations of
$\tilde{R}$ are defined by deforming the naive concatenation of polygons $Q \in \mathscr{Q}_{p,p'}$ by terms indexed by regular polyhedral decompositions of the concatenated polygons, and where each finite piece $P$ of a polyhedral decomposition contributes a factor $\mu_P$. Passing to the category of twisted complexes of $\tilde{R}$-modules, we obtain a triangulated $A_\infty$-category $FS(X,W)$ which is the model of \cite{GMWinfrared, KKS} for the Fukaya-Seidel of $(X,W)$.

The categories $R$ and $\tilde{R}$ depend on the choice of the phase $\zeta$, but the triangulated category $FS(X,W)$ is independent of these choices.
By construction, for a fixed choice of $\zeta$, we have objects $(T_p(\zeta))_{p \in \mathbb{V}}$ which form an exceptional collection of $FS(X,W)$ when ordered according to the values of $\Ima (\zeta^{-1}W(p))$. When $\zeta$ crosses a value $\zeta_{p,p'}$, this exceptional collection changes by a mutation. Comparing with the original construction of the Fukaya-Seidel category, $T_p(\zeta)$ corresponds to the Lefschetz thimble emanating from $p$, which consists of the gradient flow lines starting at $p$ and going to $\Rea(\zeta^{-1}W)=-\infty$, and whose image by $W$ is the half-line $W(p)-\RR_{\geq 0}\zeta$
in $\CC_w$.

Following the terminology of \cite{kapranov2020perverse}, the above construction of the Fukaya-Seidel category of \cite{GMWinfrared, KKS} follows a \emph{rectilinear} approach, based on $\zeta_{p,p'}$-solitons whose images by $W$ are the straight line segments $[W(p),W(p')]$. In \cite{Hayd}, a \emph{curvilinear} approach is presented instead, based on a system of possibly curvy lines connecting the critical values $W(p)$ to a given base point $-R \zeta$ with $R>>0$. This approach is in a way more direct, as it avoids the non-trivial web/convex geometry of \cite{GMWinfrared, KKS}, but is also less explicit because it relies on versions of $\zeta$-solitons and $\zeta$-instanton equations where $\zeta$ is $x$ or $(t,x)$-dependent. Intuitively, one goes from the curvilinear approach of \cite{Hayd}
to the rectilinear approach of \cite{GMWinfrared, KKS} by starting with a possibly curvy line connecting $W(p)$, $W(p')$ passing through $-R\zeta$, and then trying to straighten it while relaxing the condition to pass through $-R \zeta$: the convex polygonal lines connecting $W(p)$, $W(p')$ appear upon hitting critical values, see Figure \ref{fig3}.

\begin{figure}[ht!]
\centering
\includegraphics[width=40mm]{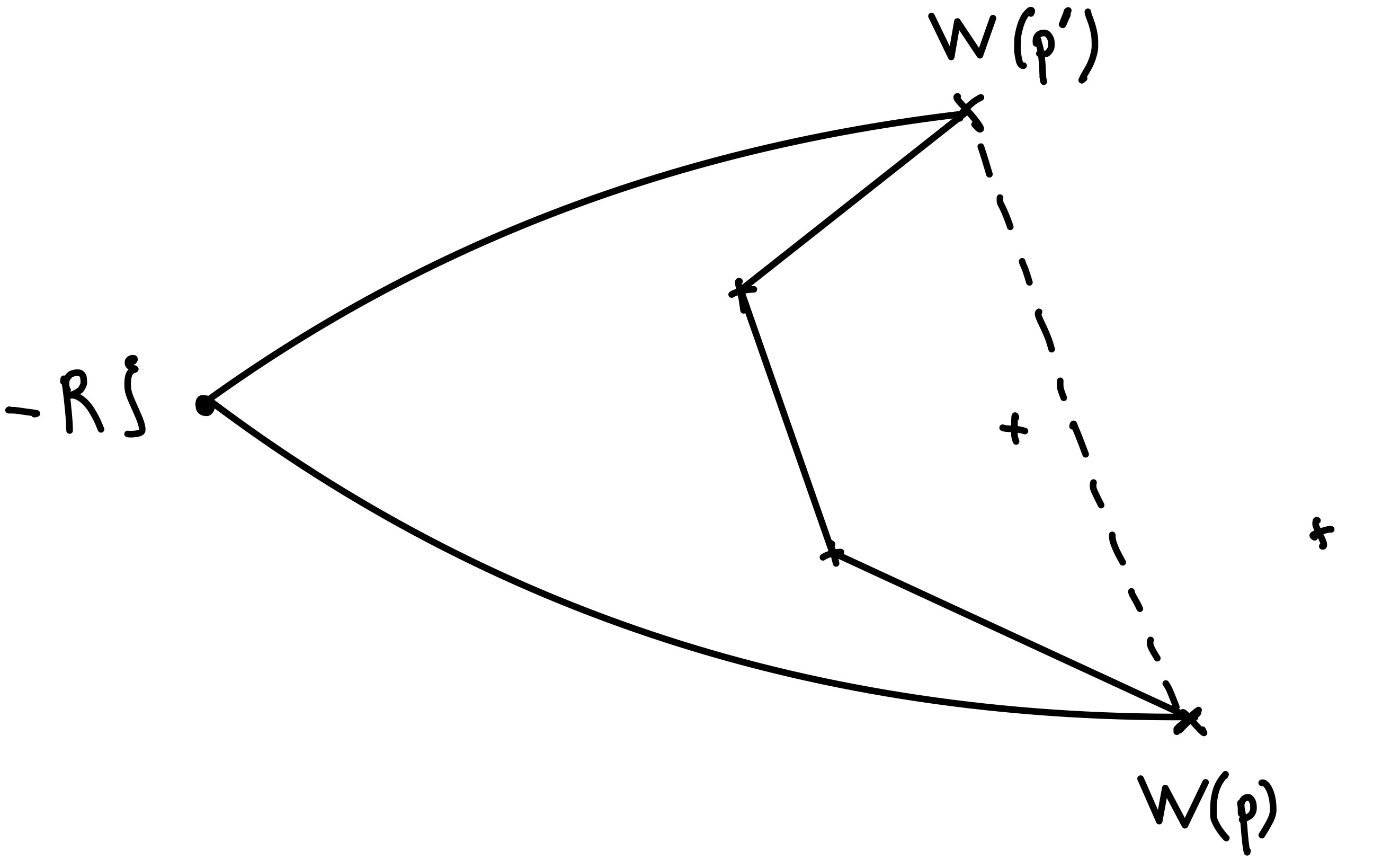}
\caption{Comparison of the curvilinear and rectilinear approaches to complex Morse theory. \label{fig3}}
\end{figure}



\subsection{Infinite-dimensional examples}
\label{sec_inf_ex}
Complex Morse theory can be formally applied to pairs $(X,W)$ where $X$ is infinite-dimensional. 
For example, one can take for $X$ the space of $G_\CC$-connections on a 3-manifold $M$, where $G_\CC$ is a complex reductive group, and for $W$ the complex-valued Chern-Simons functional \cite{Hayd, Wanalytic}\cite[\S 4.2]{khan2021categorical}. In this case,
\begin{enumerate}
    \item critical points of $W$ are flat $G_\CC$-connections on $M$,
    \item  the $\zeta$-soliton equation \eqref{eq_soliton}
    is the Kapustin-Witten equation in 4-dimensional gauge theory on $M \times \RR$,
    \item the $\zeta$-instanton equation \eqref{eq_instanton}
    is the Haydys-Witten equation in 5-dimensional gauge theory on $M \times \RR^2$.
\end{enumerate}
Another example is obtained by taking
for $X$ the space of $(0,1)$-connections on a vector bundle $V$ on a Calabi-Yau 3-fold and for $W$ the holomorphic Chern-Simons functional \cite{DTgauge, DS, Hayd}. 
In this case,
\begin{enumerate}
\item critical points of $W$ are holomorphic structures on $V$,
\item the $\zeta$-soliton equation \eqref{eq_soliton} is the equation for $G_2$-instantons in 7-dimensional gauge theory on $M \times \RR$,
\item
the $\zeta$-instanton equation \eqref{eq_instanton}
is the equation for 
$Spin_7$-instantons in 8-dimensional gauge theory
on $X \times \RR^2$.
\end{enumerate}
As explained in the following section \S\ref{sec:details_HFT}, holomorphic Floer theory is another example obtained when $X$ is the space of paths stretched between two holomorphic Lagrangian submanifolds $L_1$ and $L_2$ of a holomorphic symplectic manifold $(M,\Omega)$, and $W$ is the holomorphic action functional. In this case,
\begin{enumerate}
\item critical points of $W$ are intersection points of $L_1$ and $L_2$,
\item the $\zeta$-soliton equation \eqref{eq_soliton} is the equation for holomorphic curves $\RR \times [0,1] \rightarrow M$ with respect to hyperk\"ahler rotated complex structures (see Lemma \ref{lem_soliton}),
\item
the $\zeta$-instanton equation \eqref{eq_instanton}
is the 3d Fueter equation for maps $\RR^2 \times [0,1] \rightarrow M$ (see Lemma \ref{lem_instanton}).
\end{enumerate}

\section{Holomorphic Floer theory}
\label{sec:details_HFT}

In this section, we introduce holomorphic Floer theory as an example of infinite dimensional complex Morse theory applied to the holomorphic action functional on the space of paths connecting two holomorphic Lagrangian submanifolds in a holomorphic symplectic variety. We only describe a conjectural picture ignoring all analytic difficulties. We refer to the work of Doan-Rezchikov \cite{DR}
for first steps towards solving these analytic problems.

\subsection{Holomorphic symplectic geometry}
\label{sec_hsg}
Let $M$ be a holomorphic symplectic manifold, with complex structure $I$ and holomorphic symplectic form 
$\Omega_I$. We fix a compatible hyperk\"ahler structure on $M$, that is a Riemannian metric $g$ and complex structures $J$ and $K$ such that:
\begin{enumerate}
    \item $I$, $J$, $K$ form a quaternionic triple, that is $IJK=-1$.
    \item $g$ is K\"ahler with respect to $I$, $J$, $K$. We denote by 
    \[ \omega_I:=g(I-,-)\,,\,\, \omega_J:=g(J-,-)\,,\,\, \omega_K:=g(K-,-)\] the corresponding K\"ahler forms.
    \item $\Omega_I= \omega_J+i \omega_K$.
\end{enumerate}

Given a hyperk\"ahler structure as above, the metric $g$ is K\"ahler with respect to any complex structure in the \emph{twistor sphere} spanned by $I$, $J$, $K$, that is any complex structure of the form $aI+bJ+cK$ with $a,b,c \in \RR$ such that $a^2+b^2+c^2=1$. For any phase $\zeta \in \CC$, $|\zeta|=1$, we denote \[ J_\zeta := (\Rea\, \zeta) J + (\Ima\, \zeta) K\,.\] 
If $I$ and $-I$ are viewed as respectively the North and South poles of the twistor sphere, then the complex structures of the form $J_\zeta$ with $|\zeta|=1$ are exactly the complex structures on the Equator of the twistor sphere.
The K\"ahler form of the metric $g$ for the complex structure $J_\zeta$ is 
\begin{equation} \label{eq_omega_zeta}
\omega_\zeta:= \Rea (\zeta^{-1} \Omega_I)=(\Rea\,\zeta)\omega_J+(\Ima\,\zeta)\omega_K\,.
\end{equation}

\subsection{The holomorphic action functional} 
\label{sec_hol_action}
Let $L_1$ and $L_2$ be two connected holomorphic Lagrangian submanifolds of $(M, I, \Omega_I)$, that is $L_1$ and $L_2$ are $I$-holomorphic half-dimensional submanifolds
of $M$
and $\Omega_I|_{L_1}=\Omega_I|_{L_2}=0$.
Assume that the intersection $L_1 \cap L_2$ consists of finitely many transverse intersection points.

Let $\mathcal{P}$ be the infinite-dimensional manifold of smooth paths stretched between $L_1$ and $L_2$:
\[ \mathcal{P}:= \{ \mathfrak{p}: [0,1] \longrightarrow M \,|\, \mathfrak{p}(0) \in L_1 \,, \mathfrak{p}(1) \in L_2 \} \,.\]
It is naturally an infinite-dimensional K\"ahler manifold, with Riemannian metric 
\begin{equation} \label{eq_g_P}
g_{\mathcal{P}}(\delta \mathfrak{p}_1, \delta \mathfrak{p}_2) := \int_{0}^1 g(\delta \mathfrak{p}_1(y),
\delta \mathfrak{p}_2(y)) dy \,,\end{equation}
complex structure
\begin{equation}\label{eq_I}
(I_{\mathcal{P}}(\mathfrak{p})(\delta \mathfrak{p}))(y):= I(\mathfrak{p}(y))(\delta \mathfrak{p}(y))\,,
\end{equation}
and K\"ahler form
\begin{equation} \label{eq_omega}
\omega_{I,\mathcal{P}} (\delta \mathfrak{p}_1, \delta \mathfrak{p}_2) := \int_{0}^1 \omega(\delta \mathfrak{p}_1(y),
\delta \mathfrak{p}_2(y)) dy\,.\end{equation}

For every intersection point $p \in L_1 \cap L_2$, we still denote by $p$ the corresponding point of $\cP$ given by the constant path sitting at $p_0$.
As $L_1$ and $L_2$ are connected, there exists a unique connected component $\mathcal{P}_{0}$ of $\cP$
containing all these constant paths sitting at the intersection points of $L_1$ and $L_2$. For every $p,p' \in L_1 \cap L_2$, we denote by $\pi_1(p,q)$ the set of homotopy classes of paths in $\mathcal{P}_0$ from $p$ to $p'$.

We fix an intersection point $p_0 \in L_1 \cap L_2$. For every $\fop \in \cP_0$ and $w$ a path in 
$\cP_0$ connecting $p_0$ to $\fop$, that is a map
\begin{align*}
    w \colon [0,1] &\longrightarrow \cP_0 \\
    x &\longmapsto w_x 
\end{align*}
such that $w_0=p_0$ and $w_1 =\fop$,
one views $w$ as a map 
\begin{align*}
    w \colon [0,1] \times [0,1] \longrightarrow M \\
    (x,y) \longmapsto w_x(y)\,.
\end{align*}
and one defines 
\begin{equation} \label{eq_W}
W(\fop, w) := -\int_{[0,1] \times [0,1]} w^{*} \Omega_I \,.\end{equation}
As $\Omega_I$ is closed and $L_1$, $L_2$
are $I$-holomorphic Lagrangian submanifolds
($\Omega_I|_{L_1}=\Omega_I|_{L_2}=0$), 
the value
$W(\fop, w)$ only depends on the homotopy class $[w] \in \pi_1(p_0,p)$ of $w$, and so we obtain a well-defined function on the universal cover $\widetilde{\cP}_{0}$ of $\cP_{0}$:
\begin{align*} 
W \colon   \widetilde{\cP}_{0} &\longrightarrow \CC \\
    (\fop, [w]) &\longmapsto W(\fop, [w]) := -\int_{[0,1] \times [0,1]} w^{*} \Omega_I  \,,
\end{align*}
called the \emph{holomorphic action functional}. As $\Omega_I$ is a $I$-holomorphic 2-form on $M$, 
$W$ is a holomorphic function on the infinite-dimensional K\"ahler manifold $\widetilde{\cP}_{0}$ endowed with the complex structure $I_{\cP}$.

\subsection{Infinite-dimensional complex Morse theory}
\label{sec_inf_dim}
The holomorphic action functional $W$ is a holomorphic function on the
infinite-dimensional K\"ahler manifold $\widetilde{\cP}_0$.
The critical points of $W$ are of the form 
$(p,\gamma)$, where $p \in L_1 \cap L_2$ is an 
intersection point, and where $\gamma \in \pi_1(p_0,p)$ is a homotopy class of paths from $p_0$ to $p$ in $\cP_0$. These critical points are non-degenerate under the assumption that $L_1$ and $L_2$
intersect transversally.
Hence, we can apply formally complex Morse theory to $(\widetilde{\cP}_0, W)$. 

\begin{lemma} \label{lem_soliton}
For every $\zeta \in \CC$ with $|\zeta|=1$, a map
\begin{align*} u \colon  \RR
&\longrightarrow \widetilde{\cP}_0  \\
x &\longmapsto (y \mapsto u(x,y))
\end{align*}
is a solution of the $\zeta$-soliton equation for $(\widetilde{\cP}_0, W)$ if and only if 
\begin{align*}
u \colon \RR \times [0,1]& \longrightarrow M  \\
(x,y) &\longmapsto u(x,y)
\end{align*} 
is a solution of the $J_\zeta$-holomorphic curve equation 
\begin{equation} \label{eq_zeta_holomorphic}
    \partial_x u + J_\zeta(u) \partial_y u=0 \,.
\end{equation}
\end{lemma}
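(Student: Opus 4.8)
The plan is to unwind both sides of the claimed equivalence by writing down what the $\zeta$-soliton equation \eqref{eq_soliton} means concretely for the infinite-dimensional K\"ahler manifold $(\widetilde{\cP}_0, W)$ and then matching it term-by-term with the $J_\zeta$-holomorphic curve equation \eqref{eq_zeta_holomorphic}. First I would recall that a path $u\colon\RR\to\widetilde{\cP}_0$, $x\mapsto(y\mapsto u(x,y))$, is really a map $u\colon\RR\times[0,1]\to M$ with $u(x,0)\in L_1$, $u(x,1)\in L_2$, and that its $x$-derivative $\partial_x u$ is the tangent vector $\delta\fop$ to $\cP_0$ appearing in \eqref{eq_soliton}. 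The soliton equation reads $\partial_x u + \mathrm{grad}\,(\Rea(\zeta^{-1}W(u)))=0$, so the main task is to compute the gradient, with respect to the metric $g_{\cP}$ of \eqref{eq_g_P}, of the function $\Rea(\zeta^{-1}W)$ on $\widetilde{\cP}_0$.

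The key computation is the first variation of $W$. Since $W(\fop,w)=-\int w^*\Omega_I$ and $\Omega_I$ is closed, differentiating in the direction of a variation $\delta\fop$ of the endpoint path picks up only a boundary term: $dW(\delta\fop)=-\int_0^1 \Omega_I(\partial_y u,\delta\fop(y))\,dy$, where the contributions along $y=0,1$ drop out because $L_1,L_2$ are $I$-Lagrangian, so $\Omega_I$ restricts to zero there. Using $\Omega_I=\omega_J+i\omega_K$ and the defining relation $J_\zeta=(\Rea\zeta)J+(\Ima\zeta)K$, I would then show $\Rea(\zeta^{-1}\Omega_I)=\omega_\zeta=g(J_\zeta-,-)$ exactly as in \eqref{eq_omega_zeta}. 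Feeding this into $d(\Rea(\zeta^{-1}W))(\delta\fop)=-\int_0^1 \omega_\zeta(\partial_y u,\delta\fop(y))\,dy=-\int_0^1 g(J_\zeta\,\partial_y u,\delta\fop(y))\,dy$ and comparing with $d(\Rea(\zeta^{-1}W))(\delta\fop)=g_{\cP}(\mathrm{grad}\,(\Rea(\zeta^{-1}W)),\delta\fop)=\int_0^1 g(\mathrm{grad}(\dots),\delta\fop(y))\,dy$, I can read off $\mathrm{grad}\,(\Rea(\zeta^{-1}W))=-J_\zeta\,\partial_y u$ pointwise in $y$. Substituting into the soliton equation gives $\partial_x u - J_\zeta\,\partial_y u=0$; a sign check (whether the convention yields $\partial_x u + J_\zeta\partial_y u=0$ or its $J_\zeta\to -J_\zeta$ variant, which differs only by orientation of the $y$-interval) then produces \eqref{eq_zeta_holomorphic}, and every step is reversible, giving the "if and only if."

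The conceptually important but routine ingredient is to verify $\omega_\zeta=g(J_\zeta-,-)$, which follows directly from the hyperk\"ahler axioms in \S\ref{sec_hsg} ($\omega_J=g(J-,-)$, $\omega_K=g(K-,-)$) and linearity. The genuine subtlety, and the step I expect to require the most care, is the boundary-term analysis in the first variation: one must argue that varying through paths with $\fop(0)\in L_1$, $\fop(1)\in L_2$ produces tangent variations $\delta\fop(0)\in T_{\fop(0)}L_1$, $\delta\fop(1)\in T_{\fop(1)}L_2$, and that $\Omega_I|_{L_i}=0$ forces the endpoint contributions $\Omega_I(\partial_x u,\delta\fop)\big|_{y=0,1}$ to vanish, so that the multivaluedness of $W$ genuinely disappears upon differentiation and the gradient is well-defined on $\widetilde{\cP}_0$. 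This is precisely where the Lagrangian hypothesis is used, and it is the only place analytic transversality or integrability concerns could intrude; everything else is a formal manipulation of the $L^2$-metric $g_{\cP}$ and the pointwise hyperk\"ahler identities.
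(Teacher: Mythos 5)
Your proposal is correct and takes essentially the same route as the paper's proof: compute the first variation of $W$, use the hyperk\"ahler identity $\omega_\zeta=\Rea(\zeta^{-1}\Omega_I)$ and the $L^2$-metric \eqref{eq_g_P} to identify $\mathrm{grad}\,(\Rea(\zeta^{-1}W))$ with $J_\zeta\,\partial_y u$ pointwise in $y$, and substitute into \eqref{eq_soliton}. Two small points of comparison: with the orientation convention $dx\wedge dy$ implicit in \eqref{eq_W}, the variation vector is the \emph{first} argument, $d(\Rea(\zeta^{-1}W))(\xi)=-\int_0^1\omega_\zeta\bigl(\xi(y),\partial_y u\bigr)\,dy$, which yields $\mathrm{grad}=+J_\zeta\,\partial_y u$ directly and removes the sign ambiguity you defer to a ``sign check''; and the Lagrangian hypothesis is not needed to kill boundary terms in the variation (none arise, since varying the endpoint path only adds an infinitesimal strip to the integration domain) but rather, as the paper establishes in \S\ref{sec_hol_action}, to make $W$ well defined on $\widetilde{\cP}_0$ in the first place.
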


\begin{proof}
The $\zeta$-soliton equation \eqref{eq_soliton} for $(\widetilde{\cP}_0, W)$ is the gradient flow line equation
of the functional $\Rea (\zeta^{-1}W)$. Using \eqref{eq_omega_zeta} and \eqref{eq_W}, one finds that $\Rea (\zeta^{-1}W)$
is the action functional for the symplectic form $\omega_\zeta$, and so its gradient flow line  equation is the $J_\zeta$-holomorphic curve equation in $M$. This latter result is the well-known starting point of Floer theory \cite[(1.9-1.10)]{Flo1}, but we briefly review its derivation for convenience. Using \eqref{eq_W}, the differential
of $W$
at the point $u(x,-) :=(y \mapsto u(x,y))$ evaluated on the tangent vector field $\xi =(y \mapsto \xi(y))$ is given by 
\[ dW_{u(x,-)}(\xi)=-\int_{0}^1 \omega_\zeta (\xi(y), \partial_y u(x,y))\,.\]
As $\omega_\zeta(-,-)=-g(-,J_\zeta -)$, it follows from the definition \eqref{eq_g_P} of the metric on $\widetilde{\cP}_0$
that 
\[ (\mathrm{grad}_{u(x,-)} W)(y) = J_{\zeta}(u(x,y)) \partial_y u(x,y) \,,\]
and so the gradient flow line equation \eqref{eq_soliton} indeed reduces to the $J_\zeta$-holomorphic curve equation
\eqref{eq_zeta_holomorphic}.
\end{proof}

\begin{lemma} \label{lem_instanton}
For every $\zeta \in \CC$ with $|\zeta|=1$, a map
\begin{align*} u \colon  \RR^2
&\longrightarrow \widetilde{\cP}_0 \\
(t,x) &\longmapsto (y \mapsto u(t,x,y))
\end{align*}
is a solution of the $\zeta$-instanton equation for $(\widetilde{\cP}_0, W)$ if and only if 
\begin{align*}
u \colon \RR^2 \times [0,1]& \longrightarrow M  \\
(t,x,y) &\longmapsto u(t,x,y)
\end{align*} 
is a solution of the \emph{3d $\zeta$-Fueter equation}
\begin{equation} \label{eq_fueter}
\partial_t u +I(u)(\partial_x u +J_\zeta(u) \partial_y u)=0\,.    
\end{equation}
\end{lemma}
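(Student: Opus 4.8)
The plan is to reduce the $\zeta$-instanton equation on $(\widetilde{\cP}_0, W)$ to a pointwise identity on $M$, exactly along the lines of the proof of Lemma~\ref{lem_soliton}, so that essentially no work beyond careful bookkeeping is needed. The key observation is that the complex structure on $\widetilde{\cP}_0$ playing the role of the ambient $J$ in the instanton equation \eqref{eq_instanton} is $I_{\cP}$, which by its definition \eqref{eq_I} acts pointwise in the variable $y$ by $I$.

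First I would write out \eqref{eq_instanton} for $X=\widetilde{\cP}_0$ and the holomorphic action functional $W$, namely
\[ \partial_t u + I_{\cP}(u)\bigl(\partial_x u + \mathrm{grad}\,(\Rea(\zeta^{-1}W(u)))\bigr)=0\,,\]
where $u\colon \RR^2 \to \widetilde{\cP}_0$, $(t,x)\mapsto (y\mapsto u(t,x,y))$. The gradient term is precisely the one already computed in the proof of Lemma~\ref{lem_soliton}: for each fixed $(t,x)$, viewing $u(t,x,-)$ as a point of $\widetilde{\cP}_0$, one has
\[ \bigl(\mathrm{grad}\,(\Rea(\zeta^{-1}W))\bigr)(y) = J_\zeta(u(t,x,y))\,\partial_y u(t,x,y)\,,\]
which follows from \eqref{eq_omega_zeta}, \eqref{eq_W}, the identity $\omega_\zeta(-,-)=-g(-,J_\zeta-)$, and the definition \eqref{eq_g_P} of the metric on $\widetilde{\cP}_0$. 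Thus the only genuine analytic input is the Floer-theoretic computation that the gradient of $\Rea(\zeta^{-1}W)$ is $J_\zeta\,\partial_y u$, and this is imported verbatim from the previous lemma.

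Next, since $I_{\cP}$ acts pointwise in $y$ by $I$, and the path-space derivatives $\partial_t u$ and $\partial_x u$ correspond pointwise to the $M$-valued partial derivatives $\partial_t u(t,x,y)$ and $\partial_x u(t,x,y)$, I would evaluate the displayed instanton equation at each $y\in[0,1]$. Substituting the gradient formula, this yields
\[ \partial_t u(t,x,y) + I(u)\bigl(\partial_x u(t,x,y) + J_\zeta(u)\,\partial_y u(t,x,y)\bigr)=0\,,\]
which is exactly the $3$d $\zeta$-Fueter equation \eqref{eq_fueter}. Running the same substitution in reverse establishes the converse implication, and hence the claimed equivalence.

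I do not expect any real obstacle: the argument carries no analytic content and is a direct corollary of Lemma~\ref{lem_soliton}. The only point that requires care is the bookkeeping of the three complex structures $I$, $J_\zeta$, and $I_{\cP}$, together with the identification of which of the variables $t$, $x$, $y$ each derivative differentiates once the path-space map $u\colon\RR^2\to\widetilde{\cP}_0$ is unpacked into a map $\RR^2\times[0,1]\to M$.
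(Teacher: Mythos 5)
Your proposal is correct and follows exactly the paper's own argument: the paper's proof likewise consists of combining the definition \eqref{eq_instanton} of the $\zeta$-instanton equation, the pointwise definition \eqref{eq_I} of the complex structure $I_{\cP}$, and the gradient computation from Lemma \ref{lem_soliton}. You have simply written out in full the substitution that the paper leaves implicit, with the same bookkeeping of $I$, $J_\zeta$, and $I_{\cP}$.
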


\begin{proof}
The result follows using the definition \eqref{eq_instanton} of the $\zeta$-instanton equation, definition 
\eqref{eq_I} of the complex structure on $\widetilde{\cP}_0$, and Lemma \ref{lem_soliton} computing the 
$\zeta$-soliton equation.
\end{proof}

\subsection{Vector spaces from pairs of intersection points}
\label{sec_2d_bps_HFT}

For every pair of intersection points $p,p' \in L_1 \cap L_2$ and homotopy classes of paths $\gamma \in \pi_1(p_0,p)$ and $\gamma'\in \pi_1(p_0,p')$ in $\mathcal{P}_0$, 
one should be able by formally applying \S \ref{sec_2d_bps}
to define a vector space
\[ H((p,\gamma),(p',\gamma'))\] 
of 2d BPS states for $(\widetilde{\cP}_0, W)$. 
Explicitly, $H((p,\gamma),(p',\gamma'))$ is defined as the cohomology of the complex $(C(p,\gamma), (p',\gamma')),d)$, where 
\begin{enumerate}
    \item $C((p,\gamma),(p',\gamma'))$ is the vector space
    generated by $J_\zeta$-holomorphic curves 
    \begin{align*}
        u_i: \RR \times [0,1] &\longrightarrow M \\
        (x,y)& \longmapsto u_i(x,y) \,,
    \end{align*}
    considered up to $\RR$-translation of $x$, 
    such that 
    \[u_i(.,0) \in L_1\,, u_i(0,.)\in L_2\,,\lim_{x \rightarrow -\infty} u_i(x,.)=p\,, \lim_{x \rightarrow +\infty} u_i(x,.)=p'\,,\] 
    \[[u_i]=[\gamma' \circ \gamma^{-1}]\,,\] where $u_i$ is viewed as a path from $p$ to $p'$ in $\widetilde{\cP}_0$ and $\gamma^{-1}$ is the path obtained from $\gamma$ by reversing the orientation, and 
    \begin{equation} \label{eq_zeta}
    \zeta =\zeta_{(p,\gamma),(p',\gamma')}:=\frac{\int_{\gamma' \circ \gamma^{-1}} \Omega_I}{|\int_{\gamma' \circ \gamma^{-1}} \Omega_I|} \,,\end{equation}
    where the class $\gamma' \circ \gamma^{-1} \in \pi_1(p,p')$ 
    is viewed as defining an element in the relative homology group
    $H_2(M, L_1 \cup L_2,\ZZ)$.
    \item $d:C((p,\gamma),(p',\gamma')) \rightarrow C((p,\gamma),(q,\gamma')) $ is the differential defined by 
    \begin{equation} \label{eq_diff}
    du_i =\sum_j n_{ij} u_j \,,\end{equation}
    where $n_{ij}$ is a signed count of maps 
     \begin{align*}
    u \colon \RR^2 \times [0,1] &\longrightarrow M \\
    (t,x,y) &\longmapsto u_i(t,x,y) \,,
        \end{align*}
    such that \[u(t,x,0) \in L_1\,, u(t,x,.)\in L_2\,,\lim_{x \rightarrow -\infty} u(.,x,.)=p\,, \lim_{x \rightarrow +\infty} u(.,x,.)=p'\,,\] 
    asymptotic to $u_i$ and $u_j$,
    \[ \lim_{t \rightarrow -\infty}u(t,x,y)=u_i(x,y)\,,\,\, 
    \lim_{t \rightarrow +\infty} u(t,x)=u_j(x,y)\,,\]
    which are solutions of the 3d $\zeta_{p,p'}$-Fueter equation
\begin{equation*}
\partial_t u +I(u)(\partial_x u +J_\zeta(u) \partial_y u)=0\,,    
\end{equation*}
and considered up to $\RR$-translation of $t$.
\end{enumerate}

\begin{remark}
As $L_1$ and $L_2$ are holomorphic Lagrangian submanifolds of $(M,\Omega_I)$, they are in particular Lagrangian submanifolds of $(M,\omega_\zeta)$ for every $\zeta \in \CC$, $|\zeta|=1$, and so define natural boundary conditions for $J_\zeta$-holomorphic curves.
\end{remark}

\begin{remark}
We are making the simplifying assumption that the sum \eqref{eq_diff} is finite and we are suppressing the discussion of Novikov coefficients which would be required in general.
\end{remark}


\subsection{Categories from pairs of holomorphic Lagrangian submanifolds}
\label{sec_cat-pairs_lag}

Applying formally \S \ref{sec_FS_category}, one should be able to define a $A_\infty$-category 
\[ FS(\widetilde{\cP}_{0}, W) \]
as the Fukaya-Seidel category of $(\widetilde{\cP}_{0}, W)$. Objects are pairs 
\[(p,\gamma)\] 
where $p \in L_1 \cap L_2$ and $\gamma$ is a homotopy class of paths from $\fop_0$ to $p$ in $\widetilde{\cP}_{0}$, spaces of morphisms have bases consisting of formal tensor products of $J_{\zeta_{(p,\gamma),(p',\gamma')}}$-holomorphic curves with boundary on $L_1 \cup L_2$, and $A_\infty$-morphisms are given by counts of solutions of the 3d $\zeta$-Fueter equation asymptotic to sequences of $J_{\zeta_{(p,\gamma),(p',\gamma')}}$-holomorphic curves.

To make sense of this construction, one needs to deal with the issue that the set of critical points $(p,\gamma)$ of $W$ might be infinite if the fundamental group 
$\pi_1(\cP_0)$ is infinite,
whereas we assumed in our review of complex Morse theory in \S \ref{sec:fd_complex_morse} the finiteness of the set of critical points. 
We address this issue below by considering a categorical version of the Novikov coefficients of Floer theory.


First remark that by construction the chain complexes 
$C((p,\gamma),(p',\gamma'))$ only depend on the class $\gamma' 
\circ \gamma^{-1} \in \pi_1(p,p')$. For every $\alpha \in 
\pi_1(p,p')$, we denote $C(p,p',\alpha)$ for $C((p,\gamma),(p',\gamma'))$ with $\gamma' 
\circ \gamma^{-1}=\alpha$.
For every intersection points $p, p' \in L_1\cap L_2$, we denote by \[r: \pi_1(p,p') \rightarrow H_2(M,L_0 \cup L_1,\ZZ)\] 
the natural projection. By construction, $C(p,p',\alpha) \neq 0$
if and only if there exists $J_{\zeta_\alpha}$-holomorphic curves of class $\alpha$, where 
\begin{equation} \label{eq_zeta_alpha}
\zeta_\alpha = \frac{\int_{r(\alpha)} \Omega_I}{|\int_{r(\alpha)} \Omega_I|}\,.\end{equation}

\begin{lemma}
\label{lem_area}
Fix $p, p' \in L_1\cap L_2$ and $\alpha \in \pi_1(p,p')$. 
If $C$ is a $J_{\zeta_\alpha}$-holomorphic curve of class
$\alpha$, then the area of $C$ is $|\int_{r(\alpha)} \Omega_I|$.
\end{lemma}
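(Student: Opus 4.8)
The plan is to reduce the statement to the classical fact that a holomorphic curve is \emph{calibrated} by its ambient Kähler form, and then simply to unwind the definition \eqref{eq_zeta_alpha} of $\zeta_\alpha$. First I would record that, by the hyperkähler assumptions of \S\ref{sec_hsg}, the metric $g$ is Kähler with respect to every complex structure in the twistor sphere, and in particular with respect to $J_{\zeta_\alpha}$, whose Kähler form is $\omega_{\zeta_\alpha}=\Rea(\zeta_\alpha^{-1}\Omega_I)$ as in \eqref{eq_omega_zeta}. For any $J_{\zeta_\alpha}$-holomorphic curve $C$, the pointwise Wirtinger identity — the equality case of Wirtinger's inequality for complex submanifolds, equivalently the energy identity for $J_{\zeta_\alpha}$-holomorphic maps — shows that $\omega_{\zeta_\alpha}$ restricts to the Riemannian area form of $C$. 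Hence the area of $C$ equals $\int_C \omega_{\zeta_\alpha}$.

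Next I would argue that $\int_C \omega_{\zeta_\alpha}$ depends only on the relative homology class $r(\alpha)\in H_2(M,L_1\cup L_2,\Z)$. Indeed $\omega_{\zeta_\alpha}$ is closed, and since $L_1$ and $L_2$ are holomorphic Lagrangian for $\Omega_I$, the identity $\omega_{\zeta_\alpha}=\Rea(\zeta_\alpha^{-1}\Omega_I)$ together with $\Omega_I|_{L_1}=\Omega_I|_{L_2}=0$ gives $\omega_{\zeta_\alpha}|_{L_1}=\omega_{\zeta_\alpha}|_{L_2}=0$. By Stokes' theorem the integral is therefore unchanged under homologies whose boundary is supported on $L_1\cup L_2$, so that $\int_C \omega_{\zeta_\alpha}=\int_{r(\alpha)}\omega_{\zeta_\alpha}$.

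Finally I would combine these with the definition of $\zeta_\alpha$. Using \eqref{eq_omega_zeta} and the linearity of integration,
\[ \int_{r(\alpha)}\omega_{\zeta_\alpha}=\int_{r(\alpha)}\Rea\!\left(\zeta_\alpha^{-1}\Omega_I\right)=\Rea\!\left(\zeta_\alpha^{-1}\int_{r(\alpha)}\Omega_I\right). \]
Substituting the definition \eqref{eq_zeta_alpha} of $\zeta_\alpha$, the quantity $\zeta_\alpha^{-1}\int_{r(\alpha)}\Omega_I$ equals the nonnegative real number $\left|\int_{r(\alpha)}\Omega_I\right|$, which equals its own real part. Chaining the three displayed equalities gives that the area of $C$ is $\left|\int_{r(\alpha)}\Omega_I\right|$, as claimed.

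I do not expect a serious obstacle here: the only step demanding genuine care is the calibration identity of the first paragraph, and even this is standard once one uses that $g$ is simultaneously Kähler for every complex structure in the twistor sphere, so that $\omega_{\zeta_\alpha}$ is an honest Kähler form and $J_{\zeta_\alpha}$-holomorphic curves are its calibrated submanifolds. The closedness/Stokes argument and the final algebraic manipulation of $\zeta_\alpha$ are then routine.
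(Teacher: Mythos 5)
Your proof is correct and follows essentially the same route as the paper's: the paper asserts in one line that the area of $C$ equals $\int_{r(\alpha)}\omega_{\zeta_\alpha}$ and then performs exactly your final algebraic manipulation with \eqref{eq_omega_zeta} and \eqref{eq_zeta_alpha}. Your first two paragraphs (the Wirtinger/calibration identity and the Stokes argument showing the integral only depends on $r(\alpha)$) simply make explicit what the paper's opening sentence takes for granted.
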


\begin{proof}
As $C$ is a $J_{\zeta_\alpha}$-holomorphic curve of class $\alpha$, 
its area is $\int_{r(\alpha)} \omega_\zeta$. Using \eqref{eq_omega_zeta}, this can be rewritten as 
$\int_{r(\alpha)} \Rea(\zeta_\alpha^{-1}\Omega_I)= \Rea(\zeta_\alpha^{-1}\int_{r(\alpha)} \Omega_I)$. By \eqref{eq_zeta_alpha}, this is equal to $ \Rea(|\int_{r(\alpha)} \Omega_I|)
=|\int_{r(\alpha)} \Omega_I|$.
\end{proof}

\begin{lemma}
\label{lem_bound1}
Fix a norm $|\!|-|\!|$ on $H_2(M,L_1 \cup L_2,\ZZ) \otimes \RR$.
There exists a constant $A>0$ such that for every $p,p' \in L_1 \cap L_2$ and $\alpha \in \pi_1(p,p')$ with 
$C(p,p',\alpha) \neq 0$, we have 
\[ |\!|r(\alpha)|\!| \leq A \,|\int_{r(\alpha)} \Omega_I| \,.\]
\end{lemma}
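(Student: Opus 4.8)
The plan is to deduce the bound directly from Lemma \ref{lem_area} together with the elementary comass (Wirtinger-type) inequality relating the integral of a closed $2$-form over a holomorphic curve to its Riemannian area. By construction, the hypothesis $C(p,p',\alpha)\neq 0$ guarantees the existence of a $J_{\zeta_\alpha}$-holomorphic curve $C$ of class $\alpha$, and by Lemma \ref{lem_area} its Riemannian area equals $|\int_{r(\alpha)}\Omega_I|$. The task is thus reduced to bounding the topological norm $|\!|r(\alpha)|\!|$ by a uniform multiple of $\mathrm{Area}(C)$, uniformity meaning independence of $p$, $p'$, and $\alpha$.

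First I would exploit the finite-dimensionality of $V := H_2(M, L_1\cup L_2;\RR)$. Its dual is $H^2(M, L_1\cup L_2;\RR)$, and by the relative de Rham theorem every class there is represented by a closed $2$-form vanishing on $L_1\cup L_2$. Fixing closed forms $\eta_1,\dots,\eta_r$ whose classes form a basis of $H^2$, the assignment $\beta \mapsto \max_i |\langle[\eta_i],\beta\rangle|$ is a norm on $V$, hence equivalent to $|\!|-|\!|$: there is a constant $c>0$, depending only on the basis and the metric, with $|\!|\beta|\!| \le c\,\max_i|\langle[\eta_i],\beta\rangle|$ for all $\beta\in V$.

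Next, for each $i$ I would apply the comass inequality to the holomorphic curve $C$. Since $r(\alpha)$ is the relative class of $C$ and $\eta_i$ vanishes on $L_1\cup L_2$, we have $\langle[\eta_i],r(\alpha)\rangle = \int_C \eta_i$, and the comass bound gives $|\int_C\eta_i| \le \|\eta_i\|_{\mathrm{com}}\,\mathrm{Area}(C)$, where $\|\eta_i\|_{\mathrm{com}}$ denotes the global comass of $\eta_i$ (recall that for a $J_{\zeta_\alpha}$-holomorphic curve the Riemannian area agrees with the symplectic area, as used in the proof of Lemma \ref{lem_area}). Combining these estimates with $\mathrm{Area}(C)=|\int_{r(\alpha)}\Omega_I|$ and setting $A := c\,\max_i\|\eta_i\|_{\mathrm{com}}$ yields $|\!|r(\alpha)|\!|\le A\,|\int_{r(\alpha)}\Omega_I|$, with $A$ manifestly uniform in $p$, $p'$, $\alpha$.

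The main obstacle is ensuring that $A$ is finite when $M$ is non-compact, as it is for Seiberg-Witten integrable systems: the comass $\|\eta_i\|_{\mathrm{com}}$ is a supremum over all of $M$ and need not be finite for an arbitrary representative. I would address this by selecting the $\eta_i$ to have bounded comass, which is possible under the bounded-geometry hypotheses satisfied in the relevant examples. Note that the parallel Kähler forms $\omega_J,\omega_K$ are calibrations, hence already have comass $1$, and they detect exactly the directions paired by $\int\Omega_I$; the genuine content of the argument is therefore to control the complementary directions of $H^2$ by forms of bounded comass. On a compact $M$ this finiteness is automatic, since the comass is a continuous function attaining a finite maximum.
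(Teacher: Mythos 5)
Your proposal is correct and follows essentially the same route as the paper: the paper also pairs $r(\alpha)$ against a fixed finite basis of relative $2$-forms (it takes harmonic representatives relative to the Lagrangians rather than arbitrary closed ones), bounds each pairing by the sup-norm of the form times the area of the $J_{\zeta_\alpha}$-holomorphic curve, and invokes Lemma \ref{lem_area} to identify that area with $|\int_{r(\alpha)}\Omega_I|$. Your treatment of the non-compact case (finiteness of the comass under bounded geometry) matches the paper's caveat that the suprema are finite when $M$ is bounded or appropriately convex at infinity, so the two arguments differ only in the choice of representatives and in your more explicit handling of the norm-equivalence step.
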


\begin{proof}
Let $(h_i)$ be a basis of harmonic 2-forms on $M$ relative to $L_1 \cap L_2$ \cite{Hodge_nc}. It is enough to bound $|\int_{r(\alpha)} h_i|$ for every $i$. Let $A_i$ be the supremum of $|h_i|$ over all of $M$, which is finite if $M$ is bounded or appropriately convex at infinity. If $C(p,p',\alpha)\neq 0$, there exists a $J_{\zeta_\alpha}$-holomorphic curve $C$ of class $\alpha$, and so  
$|\int_{r(\alpha)} h_i|$
is bounded by $A_i$ times the area of $C$, which is equal to $|\int_{r(\alpha)} \Omega_I|$ by Lemma \ref{lem_area}.
\end{proof}

\begin{remark}
The inequality of Lemma \ref{lem_bound1} is formally similar to the support property in DT theory \cite{kontsevich2008stability} and its proof is parallel to the argument given in \cite[\S 1.2, Remark 1]{kontsevich2008stability} for the validity of the support property in the context of special Lagrangian submanifolds.
Under the correspondence between holomorphic Floer theory and DT theory described in \S\ref{sec_dt}, the inequality of Lemma \ref{lem_bound1} will exactly match with the support property in DT theory.
\end{remark}

\begin{lemma}
\label{lem_bound_2}
Fix $p, p' \in L_1 \cap L_2$ and $\beta \in H_2(M,L_1 \cup L_2,\ZZ)$.
Then, there exists finitely many $\alpha \in \pi_1(p,p')$ such that $C(p,p',\alpha) \neq 0$ and $r(\alpha)=\beta$. 
\end{lemma}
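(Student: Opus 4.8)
The plan is to argue by a Gromov compactness argument for holomorphic strips, the point being that it is holomorphicity, and not merely the homology class, that cuts the count down to a finite number. First I would suppose, for contradiction, that there are infinitely many pairwise distinct classes $\alpha_n \in \pi_1(p,p')$ with $r(\alpha_n)=\beta$ and $C(p,p',\alpha_n)\neq 0$. By definition of the chain groups in \S\ref{sec_2d_bps_HFT}, each nonvanishing $C(p,p',\alpha_n)$ provides a $J_{\zeta_{\alpha_n}}$-holomorphic strip $u_n\colon \RR\times[0,1]\to M$ of class $\alpha_n$, with boundary on $L_1\cup L_2$ and asymptotic to $p$ and $p'$ as $x\to-\infty$ and $x\to+\infty$. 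The crucial observation is that, since $r(\alpha_n)=\beta$ is fixed, the phase $\zeta_{\alpha_n}=\zeta_\beta$ of \eqref{eq_zeta_alpha} is \emph{the same} for every $n$, so all the $u_n$ solve one and the same $J_{\zeta_\beta}$-holomorphic curve equation; moreover, by Lemma \ref{lem_area} they all have the same area $|\int_\beta \Omega_I|$.

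Second, I would invoke Gromov--Floer compactness for this sequence. Let $\mathcal{M}$ denote the moduli space of $J_{\zeta_\beta}$-holomorphic strips of homology class $\beta$ connecting $p$ and $p'$ with Lagrangian boundary on $L_1\cup L_2$. Since the members of my sequence share the same energy (equal to the area $|\int_\beta\Omega_I|$), the same asymptotics $p,p'$, and the same boundary conditions, a subsequence converges in the Gromov topology to a possibly broken, possibly bubbled stable configuration $u_\infty$. This step requires $M$ to be compact or suitably convex at infinity, which is exactly the hypothesis already used in Lemma \ref{lem_bound1}, so that no energy escapes to the ends and the limit retains total homology class $\beta$ and total area $|\int_\beta\Omega_I|$.

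Third, I would conclude from the structure of Gromov convergence. The assignment of a homotopy class defines a function $\mathcal{M}\to\pi_1(p,p')$ into the discrete set $\pi_1(p,p')$, and this function is locally constant: it is continuous along families of smooth strips, and it extends to a locally constant function on the compactification $\overline{\mathcal{M}}$, where broken and bubbled configurations carry the concatenated homotopy class. Hence for all sufficiently large $n$ in the convergent subsequence, $u_n$ is obtained by gluing and smoothing the fixed limit $u_\infty$ and therefore lies in the single homotopy class determined by $u_\infty$; this forces the $\alpha_n$ to be eventually constant, contradicting their pairwise distinctness. Equivalently and more directly: the homotopy-class map is continuous from the compact space $\overline{\mathcal{M}}$ to a discrete set, hence has finite image, and this image contains every $\alpha$ with $r(\alpha)=\beta$ and $C(p,p',\alpha)\neq 0$.

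The hard part will be the analytic foundations underlying the compactness step: the moduli theory of the $\zeta$-soliton (holomorphic strip) equation in this hyperk\"ahler, possibly noncompact setting is not developed rigorously here, and one must secure the usual a priori energy and derivative bounds together with control of the ends of $M$ (cf.\ the forthcoming analysis of the soliton and Fueter equations in Doan--Rezchikov \cite{DR}). Granting the standard compactness package for holomorphic curves with Lagrangian boundary---the same input implicitly used throughout \S\ref{sec_2d_bps_HFT}---the finiteness follows as above. I would stress that no purely topological argument can replace this input: the kernel of $r$ on $\pi_1(\cP_0)$ can well be infinite, and two strips in the same homology class but different homotopy classes differ by a null-homologous loop of vanishing symplectic area, so it is the holomorphicity of the representatives, enforced through compactness, rather than the homology class alone, that reduces the count to finitely many.
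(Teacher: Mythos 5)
Your proposal follows essentially the same route as the paper's proof: you observe that fixing $r(\alpha)=\beta$ fixes both the complex structure $J_{\zeta_\beta}$ and (via Lemma \ref{lem_area}) the area of the holomorphic strips, and then you invoke Gromov compactness to conclude finiteness of the possible homotopy classes. The paper states this in three sentences; your version merely fills in the standard details of how compactness forces the homotopy class, which is locally constant on the compactified moduli space, to take only finitely many values, which is a correct elaboration rather than a different argument.
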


\begin{proof}
The complex structure $\zeta_\alpha$ and the area of $J_{\zeta_\alpha}$-holomorphic curves given by Lemma \ref{lem_area} only depend on $r(\alpha)$. Hence, we are considering a set of holomorphic curves with respect to a fixed complex structure and with fixed area, and so the result follows 
by Gromov's compactness for holomorphic curves.
\end{proof}

Let $(p,\gamma)$ and $(p',\gamma')$ be two critical points of $W$. We describe how to define a completed version $\hat{R}_{(p,\gamma),(p',\gamma')}$ of the Hom complex \eqref{eq_complex} used in the case of finitely many critical points.
Fix $N>0$.
For every $Q$ as in \eqref{eq_RQ}, corresponding to a sequence 
$(p,\gamma), (p_1,\gamma_1), \dots, (p_k,\gamma_k)$ of critical points, we say that 
$Q<N$ if 
\[\sum_j |\int_{r(\gamma_{j+1})-r(\gamma_j)} 
\Omega_I|<N\,.\] 
It follows from Lemmas \ref{lem_bound1}-\ref{lem_bound_2} 
that there are finitely many $Q$ such that $Q<N$ and 
\[R_Q := C((p,\gamma),(p_1,\gamma_1))\otimes C((p_1,\gamma_1),(p_2,\gamma_2)) \otimes \cdots \otimes C((p_{k},\gamma_k), (p',\gamma')) \neq 0\,.\]
Hence, the direct sum 
\[R_{(p,\gamma),(p',\gamma')}^{<N}:=
\bigoplus_{Q<N} R_Q \]
contains only finitely many non-zero summands and all the categorical constructions described in \S\ref{sec_FS_category} in the case of finitely many critical points make sense with $R_{(p,\gamma),(p',\gamma')}^{<N}$ and we obtain a truncated Fukaya-Seidel category $FS^{<N}(\widetilde{\cP}_{0}, W)$ with Hom-complexes $R_{(p,\gamma),(p',\gamma')}^{<N}$.
Finally, we define the Fukaya-Seidel category 
$FS(\widetilde{\cP}_{0}, W)$ by taking the categorical limit $C \rightarrow +\infty$, 
with Hom complexes
\[\hat{R}_{(p,\gamma),(p',\gamma')}=\lim_{N \rightarrow +\infty}R_{(p,\gamma),(p',\gamma')}^{<N}\,.\]


\subsection{2-categories from holomorphic symplectic manifold}
In the same way that Floer homology groups of pairs of Lagrangian submanifolds of a symplectic manifold can be realized as Hom spaces of the Fukaya category, it is natural to expect that the categories $FS(\widetilde{\mathcal{P}}_0,W)$
attached to pairs of holomorphic Lagrangian submanifolds of a holomorphic symplectic manifold can be realized as Hom categories of a 2-category.

\begin{conjecture}\label{conj_holom}
Let $M$ be a holomorphic symplectic manifold, with complex structure $I$ and holomorphic symplectic form $\Omega_I$.
The
$I$-holomorphic Lagrangian submanifolds of $M$
should form a linear $2$-category,  
such that the category of morphisms
 $\Hom (L_1,L_2)$ 
between two objects $L_1$ and $L_2$ 
is the Fukaya-Seidel category of the 
holomorphic action functional
on the infinite-dimensional space of paths between $L_1$ and $L_2$.
\end{conjecture}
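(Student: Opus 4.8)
The plan is to supply the structural data of a $2$-category on top of the Hom-categories already built in \S\ref{sec_cat-pairs_lag}. With objects the $I$-holomorphic Lagrangian submanifolds of $M$ and $\Hom(L_1,L_2):=FS(\widetilde{\cP}_0,W)$ (the latter depending on the pair $L_1,L_2$), what remains is: (i) a horizontal composition functor $\Hom(L_1,L_2)\times\Hom(L_2,L_3)\to\Hom(L_1,L_3)$ for every triple $L_1,L_2,L_3$; (ii) a unit object of each $\Hom(L,L)$; and (iii) associativity and unit $2$-morphisms satisfying the pentagon and triangle coherences. Since each Hom-category is already $A_\infty$, hence stable, the natural target is a linear $2$-category, indeed a stable $(\infty,2)$-category, and (iii) should be read as coherence up to a contractible space of choices rather than as strict equalities.

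The main construction is horizontal composition, and I would build it from concatenation of paths: a path from $L_1$ to $L_2$ and a path from $L_2$ to $L_3$ meeting at a common point of $L_2$ concatenate to a path from $L_1$ to $L_3$, and under this operation the holomorphic action \eqref{eq_W} is additive, so the central charges of $W_{12},W_{23},W_{13}$ add and the three Fukaya--Seidel categories sit over a single plane $\C_w$. The polygonal/web formalism of \cite{GMWinfrared,KKS} recalled in \S\ref{sec_FS_category} then applies once the edges of the polygons are allowed to carry the three distinct Lagrangian labels; this is exactly the \emph{composition of interfaces} built into the framework of \cite{GMWinfrared}. Composition is defined by counting rigid solutions $u\colon\RR^2\times[0,1]\to M$ of the $3$d $\zeta$-Fueter equation \eqref{eq_fueter} with $u(t,x,0)\in L_1$, $u(t,x,1)\in L_3$, and an internal seam along which $u$ is matched across $L_2$ (a Fueter analogue of a holomorphic quilt), whose image under $W$ is controlled by a convex polygon $\Sigma\subset\C_w$ with consecutive edges labeled $(L_1,L_2)$, $(L_2,L_3)$, $(L_1,L_3)$ and whose ends are asymptotic to the $J_\zeta$-holomorphic strips (solitons) generating the relevant morphism complexes. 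The finiteness Lemmas \ref{lem_bound1}--\ref{lem_bound_2} ensure that, after truncating by total action $<N$, only finitely many polygons contribute, so composition is defined on each $FS^{<N}$ and passes to the categorical limit $N\to\infty$ exactly as in \S\ref{sec_cat-pairs_lag}.

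The unit of $\Hom(L,L)$ I would produce, as in the Fukaya-categorical formalism, as a fundamental-class/PSS-type element; the only new point is that $L$ fails to meet itself transversally, so $L_1=L_2=L$ must be made admissible by a perturbation internal to the web formalism rather than by a genuine Hamiltonian pushoff, which need not preserve $I$-holomorphicity. The associativity and unit coherences of (iii) I would then deduce by the standard Floer-theoretic degeneration argument: every structure map is a count of $0$-dimensional (rigid) Fueter moduli, and each quadratic relation or higher coherence is read off from the classification of the codimension-one boundary strata of the associated $1$-dimensional moduli space, the only such strata being the breaking of a $J_\zeta$-holomorphic strip (a vertical composite), the bubbling of a Fueter solution at a polygon vertex (a horizontal composite), and seam degenerations implementing the interchange law. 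This is precisely the mechanism producing the Maurer--Cartan and $A_\infty$ relations for a single $(X,W)$ in \cite{GMWinfrared,KKS}, now applied to the multi-Lagrangian polygons.

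The hard part is entirely analytic, which is why the statement is a conjecture. Before any of the counts $n_{ij}$, the composition constants, or the coherence homotopies are well-defined integers, one must establish the foundations of the $3$d Fueter equation \eqref{eq_fueter}: a priori energy estimates and a Gromov--Uhlenbeck-type compactness theorem excluding nonstandard bubbling and guaranteeing that the only degenerations are the codimension-one strata used above, together with a Fredholm, transversality, and gluing package realizing the moduli as compact oriented manifolds of the expected dimension. None of this is currently available at this level of generality; it is exactly the program initiated by Doan--Rezchikov \cite{DR}, whose study of compactness for the Fueter equation is the natural entry point. Granting this input, the verification of the $2$-categorical axioms is a formal consequence of the web/polygon combinatorics of \cite{GMWinfrared,KKS,khan2021categorical} and should present no essential further difficulty.
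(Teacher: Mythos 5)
You have written a program, not a proof, and to your credit you say so explicitly. The statement you were given is Conjecture \ref{conj_holom}: the paper itself offers no proof of it, only the construction of the prospective Hom-categories in \S\ref{sec_cat-pairs_lag} (the truncations $FS^{<N}$ controlled by Lemmas \ref{lem_bound1}--\ref{lem_bound_2} and the limit $N\to\infty$), the expectation that the result is a stable $(\infty,2)$-category, and a pointer to Doan--Rezchikov \cite{DR} as the entry point for the missing analysis. Judged against that, your proposal is consistent with the paper's picture: you reuse the same Hom-categories, the same truncation-and-limit mechanism, and you identify the same core obstruction (compactness, Fredholm theory, transversality and gluing for the $3$d Fueter equation) that the paper acknowledges by phrasing the statement as a conjecture.

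The genuine gap is that your closing claim --- that once the analytic package is granted, the $2$-categorical axioms follow formally from the web/polygon combinatorics of \cite{GMWinfrared, KKS} --- overstates what is available. Horizontal composition is not merely a counting problem waiting on compactness: the ``quilted'' Fueter maps with a seam along $L_2$ that you propose to count have never been formulated, and specifying a well-posed matching condition along a codimension-one seam for a first-order equation on $\RR^2\times[0,1]$ is a modeling problem that precedes any Gromov--Uhlenbeck theorem; the interfaces of \cite{GMWinfrared} are a $2$d field-theoretic notion whose Fueter analogue is precisely what has to be invented here. Likewise, the concatenation/additivity argument is heuristic: concatenated paths are only piecewise smooth, the space of paths from $L_1$ to $L_3$ is not the image of the concatenation map, and the three action functionals are multivalued on three different covers, so the assertion that everything ``sits over a single plane $\C_w$'' requires an argument about compatibility of the corresponding local systems of central charges. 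Finally, for units the paper's own device is a wrapped version of the Fueter $2$-category (see \S\ref{sec_lines} and the reference to \cite{wrapped}), which is a concrete mechanism, whereas your ``perturbation internal to the web formalism'' is left unspecified after you correctly rule out Hamiltonian push-offs. None of this makes your outline wrong --- it is a sensible program, close in spirit to \cite{DR} and \cite{khan2021categorical} --- but the distance between program and proof is larger than your final sentence suggests, and it includes structural, not only analytic, open problems.
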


We refer to the 2-category whose existence is asserted by Conjecture \ref{conj_holom} as the \emph{Fueter 2-category} of $M$, and we denote it by $\mathrm{Ft}(M)$, or $\mathrm{Ft}(M,\Omega_I)$ when we wish to make clear the complex structure and holomorphic symplectic form that are considered. We expect the Fueter 2-category to be independent on the choice of the auxiliary complex structures $J$ and $K$
(or to have only dependence on their asymptotic behaviour if $M$ is non-compact), in the same way that the Fukaya category is a symplectic invariant and independent on auxiliary choices of almost-complex structures. If $M$ is non-compact, one should also have a wrapped version of the Fueter 2-category, parallel to the wrapped Fukaya category \cite{wrapped}.

For the simplest examples of holomorphic symplectic manifolds given as cotangent bundles $T^{*}L$ of complex manifolds $L$, one expects the Hom category between the zero-section $L$ and the graph $L'$ of a holomorphic Morse function $W$ on $L$ to reproduce the Fukaya-Seidel category of $(L,W)$, similarly to the way that the Floer homology between the zero-section and a perturbation of the zero-section in the cotangent bundle of a manifold reproduces Morse homology
\cite{Flo2, MR1480992}. 

\begin{conjecture} \label{conj_dim_red}
Let $L$ 
be an exact Kähler manifold, $W$ 
a holomorphic Morse function on $L$, 
and $L'$ the graph of $dW$ in
$T^{*}L$. Then the Hom-category $\Hom_{\mathrm{Ft}(T^{*}L)} (L,L')$
in the Fueter 2-category of $T^{*}L$
coincides with the Fukaya-Seidel category of $(L,W)$.
\end{conjecture}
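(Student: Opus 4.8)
The plan is to exhibit a \emph{dimensional reduction} identifying the holomorphic Floer theory of the pair $(L,L')$ in $T^{*}L$ with the finite-dimensional complex Morse theory of $(L,W)$ reviewed in \S\ref{sec_2d_bps}--\S\ref{sec_FS_category}, in direct analogy with the classical fact that the Lagrangian Floer theory of the zero-section and the graph of $df$ in a cotangent bundle recovers the Morse theory of $f$ \cite{Flo2, MR1480992}. First I would fix the geometry: equip a neighborhood of the zero-section of $T^{*}L$ with a compatible hyperkähler structure as in \S\ref{sec_hsg} (whose existence, restricting to the given exact Kähler metric on $L$, is the Feix--Kaledin construction), so that $I$ is the standard complex structure on $T^{*}L$ and $\Omega_I$ is the canonical holomorphic symplectic form $d\lambda_{\mathrm{can}}$. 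The zero-section $L$ and the graph $L'$ of $dW$ are then $I$-holomorphic Lagrangian submanifolds, and since both are sections of $T^{*}L\to L$ their intersection $L\cap L'$ is exactly $\mathrm{Crit}(W)$, all transverse when $W$ is Morse. Because $L\hookrightarrow T^{*}L$ is a deformation retract, projecting a path $\fop\in\cP_0$ to its base component and integrating out the cotangent fiber identifies the holomorphic action functional on $\widetilde{\cP}_0$ with the single-valued function $W$ on $L$ up to the relevant homotopy; in particular its critical points biject with $\mathrm{Crit}(W)$, matching the objects $(T_p(\zeta))$ of $FS(L,W)$, and the phases $\zeta_{(p,\gamma),(p',\gamma')}$ of \eqref{eq_zeta} agree with the phases $\zeta_{p,p'}$ of \S\ref{sec_2d_bps}.

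The heart of the argument is the reduction at the level of the defining PDEs. By Lemma \ref{lem_soliton} a $\zeta$-soliton for $(\widetilde{\cP}_0,W)$ is a $J_\zeta$-holomorphic strip $\RR\times[0,1]\to T^{*}L$ with boundary on $L\cup L'$, and by Lemma \ref{lem_instanton} a $\zeta$-instanton is a solution of the 3d $\zeta$-Fueter equation on $\RR^2\times[0,1]\to T^{*}L$. Using the invariance of both sides under the rescaling $W\mapsto \eps W$ (on the Fueter side this is a special case of the expected Hamiltonian-isotopy invariance of $\mathrm{Ft}$, since the graph of $\eps\, dW$ is isotopic to that of $dW$; on the $FS(L,W)$ side the soliton/instanton data are unchanged because positive rescaling preserves both gradient trajectory images and the cyclic order of critical values), I would pass to the adiabatic limit $\eps\to 0$. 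In this limit a Fukaya--Oh-type analysis forces the cotangent-fiber component of a $J_\zeta$-holomorphic strip to be slaved to its base motion, so that the base curve $x\mapsto u(x,\cdot)$ converges to a gradient flow line of $\Rea(\zeta^{-1}W)$ on $L$, which is precisely a $\zeta$-soliton of $(L,W)$; the same reduction should send 3d $\zeta$-Fueter maps in $T^{*}L$ to $\zeta$-instantons of $(L,W)$ on $L$. A priori energy control, needed since $T^{*}L$ is non-compact, is supplied by the exactness of $L$ and $L'$ together with the area identity of Lemma \ref{lem_area} and the support bound of Lemma \ref{lem_bound1}, which confine the relevant solutions to a fixed neighborhood of the zero-section.

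Granting this correspondence, I would conclude by matching the categorical data in the order of \S\ref{sec_FS_category}. The bijection of solitons identifies the chain-complex generators, giving $C((p,\gamma),(p',\gamma'))\cong C(p,p')$; the bijection of Fueter maps matches the differentials $n_{ij}$ of \eqref{eq_diff}; and the adiabatic reduction of the planar counting problems $\mu_P$ matches the Maurer--Cartan element deforming $R$ into $\tilde{R}$. Passing to twisted complexes then yields an equivalence $FS(\widetilde{\cP}_0,W)\simeq FS(L,W)$, which is the asserted coincidence of $\Hom_{\mathrm{Ft}(T^{*}L)}(L,L')$ with the Fukaya--Seidel category of $(L,W)$.

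The main obstacle is entirely analytic and concentrated at the $2$-morphism level. Whereas the strip reduction is a holomorphic-curve adiabatic limit of essentially classical Fukaya--Oh type, the corresponding statement for the 3d Fueter equation requires a compactness and transversality theory for this nonlinear elliptic system that is only beginning to be developed \cite{DR}; establishing the bijective, gluing-compatible adiabatic correspondence between Fueter maps in $T^{*}L$ and $\zeta$-instantons on $L$, and ruling out bubbling into the fiber directions as $\eps\to 0$, is where the real difficulty lies.
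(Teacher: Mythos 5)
The statement you set out to prove is Conjecture \ref{conj_dim_red} of the paper: it is stated as a \emph{conjecture}, and the paper contains no proof of it, so there is no argument of the paper to compare yours against step by step. The paper's entire treatment consists of remarking that the conjecture was independently formulated by Doan--Rezchikov, and that \cite{DR} establishes a first step, namely an isomorphism between moduli spaces of solutions of the 3d Fueter equation in $T^{*}L$ and moduli spaces of pseudo-holomorphic curves in $L$.

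Your strategy --- identify $L\cap L'$ with $\mathrm{Crit}(W)$ (using exactness to make the holomorphic action functional single-valued), invoke Lemmas \ref{lem_soliton} and \ref{lem_instanton} to rewrite $\zeta$-solitons and $\zeta$-instantons as $J_\zeta$-holomorphic strips and 3d Fueter maps with boundary on $L\cup L'$, and then run a Fukaya--Oh type adiabatic limit to collapse both PDEs onto the finite-dimensional data of $(L,W)$ --- is exactly the route the paper and \cite{DR} envisage, and you correctly locate the decisive obstruction in the compactness, transversality and gluing theory for the Fueter equation. But the proposal is a roadmap, not a proof, and it has gaps beyond the one you flag. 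First, the rescaling step $W\mapsto \eps W$ appeals to ``Hamiltonian-isotopy invariance of $\mathrm{Ft}$,'' which cannot be used as an ingredient: the Fueter 2-category is itself only conjecturally defined (Conjecture \ref{conj_holom}), so none of its expected invariance properties are available. Second, the adiabatic slaving of the fiber component, the exclusion of bubbling into the fiber directions, and the compatibility of the correspondence with gluing --- needed to match not only the differentials \eqref{eq_diff} but also the Maurer--Cartan element $\sum_{P}\mu_P$ and hence the full $A_\infty$ structure of \S\ref{sec_FS_category} --- are all unestablished; \cite{DR} proves only the unperturbed moduli-space identification, not the chain-level, $A_\infty$-compatible statement your last paragraph requires. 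So your proposal should be read as a correct and well-aligned plan of attack whose essential steps remain open, which is precisely why the statement stands as a conjecture in the paper rather than a theorem.
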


Conjectures \ref{conj_holom} and \ref{conj_dim_red} were independently formulated by Doan-Rezchikov
\cite{DR}. In \cite{DR}, Doan-Rezchikov introduce a flexible notion of hyperk\"ahler geometry and start the analytic study of the compactness properties of solutions of the 3d Fueter equation. As an application, they obtain  a first step towards Conjecture \ref{conj_dim_red} by proving an isomorphism between moduli spaces of solutions of the 3d Fueter equation in $T^{*}L$
and moduli spaces of pseudo-holomorphic curves in $L$.

\begin{remark}
A ``hyperkähler Floer theory" has been developed by Hohloch-Noetzel-Salamon
\cite{MR2555940}, which is also based on a version of the Fueter equation. However, the proposal
of \cite{MR2555940} and Conjecture \ref{conj_holom} are different: whereas the three complex structures $I$, $J$, $K$ play symmetric roles in \cite{MR2555940}, it is not the case in Conjecture \ref{conj_holom}.
\end{remark}

\subsection{The Fueter 2-category and the Rozansky-Witten 2-category}
\label{sec_rw}

As reviewed in \S\ref{sec:fd_complex_morse}, given a pair $(X,W)$ where $X$ is a K\"ahler manifold with $c_1(X)=0$ and $W\colon X \rightarrow \CC$ is a holomorphic Morse function, one can define the Fukaya-Seidel category $FS(X,W)$. It is a $\ZZ$-graded category which is not Calabi-Yau in general (the Serre functor is induced by the global monodromy around the critical points in the $W$-plane). Physically, $FS(X,W)$ is the category of boundary conditions in the A-model twist of the 2-dimensional 
$\mathcal{N}=(2,2)$
Landau-Ginzburg theory defined by $(X,W)$. Another category naturally attached to $(X,W)$ is the category of matrix factorizations $MF(X,W)$, which is $\ZZ/2\ZZ$-graded, Calabi-Yau, and physically obtained as the category of boundary conditions in the B-model twist of the Landau-Ginzburg theory defined by $(X,W)$.
When $(X,W)$ is the mirror of a Fano manifold $Y$, the Fukaya-Seidel category of $(X,W)$ is the derived category of coherent sheaves on $Y$:
\[ FS(X,W)=\mathrm{D^b Coh}(Y)\,,\]
and the category of matrix factorizations of $(X,W)$ is the Fukaya category of $Y$:
\[ MF(X,W)=F(Y)\,.\]

By construction, the Hom categories $\Hom_{\mathrm{Ft}(M,\Omega_I)}(L_1,L_2)$ in the
Fueter 2-category $\mathrm{Ft}(M,\Omega_I)$
of a holomorphic symplectic manifold $(M,\Omega_I)$ are the Fukaya-Seidel categories of the holomorphic action functional on the spaces of paths between $I$-holomorphic Lagrangian submanifolds $L_1$ and $L_2$.
If one replaces in this construction the Fukaya-Seidel category of the holomorphic action functional by the category of matrix factorizations of the holomorphic action functional, one obtains a 2-category $RW(M,\Omega_I)$.
The proposal for such a 2-category attached to a holomorphic symplectic manifold was first formulated by Kapustin-Rozansky-Saulina \cite{MR2522724, MR2771578}, who also argued that 
$RW(M,\Omega_I)$ should be the 2-category of boundary condition of the $3$-dimensional Rozansky-Witten topological quantum field theory (TQFT) \cite{RW}. In general, a 2-category which is Calabi-Yau, as $RW(M,\Omega_I)$, is always the category of boundary conditions of a 3d TQFT \cite{lurie}\footnote{We are neglecting here smoothness and compactness issues which could make the TQFT only partially defined.}. By contrast, a 2-category which is not Calabi-Yau, as the Fueter 2-category, defines at best a \emph{framed} 3d TQFT, which is only defined on framed manifolds of dimension $\leq 3$ \cite{lurie}.
This is a 3d version of the maybe more familiar fact that the B-model (resp.\ A-model) of a holomorphic Morse function $(X,W)$ (resp.\ of a Fano manifold $Y$) is a 2d TQFT whereas the A-model of $(X,W)$
(resp.\ the B-model of $Y$) is only a framed 2d TQFT.

One expects that most of the structures existing for the categories $FS(X,W)$ and $MF(X,W)$ of a finite-dimensional Landau-Ginzburg model $(X,W)$ have an analogue for the 2-categories $\mathrm{Ft}(M,\Omega_I)$ and $RW(M,\Omega_I)$ attached to a holomorphic symplectic manifold. For example, one can construct from $MF(X,W)$ a non-commutative Hodge structure \cite{KKP}, with Dolbeault data 
\[ \mathbb{H}^{*}(X, (\Omega_X^{*}, dW \wedge - ))\,,\]
de Rham data given by the $\CC(\!(u)\!)$-module
\[ \mathbb{H}^{*}(X, (\Omega_X^{*}, ud+dW \wedge-))\,\] 
and Betti data $H^{*}(X,X_{-\infty},\ZZ)$ where $X_{-\infty}=
\{ \Rea W <<0\}$. The Riemann-Hilbert type isomorphism between de Rham and complexified Betti data is given by the exponential periods
$\int_\gamma e^W \alpha$, where $\gamma \in H_{*}(X,X_{-\infty},\ZZ)$ are naturally classes of objects in $FS(X,W)$. Moreover, the Hodge filtration is encoded by a connection on the Rham data, with second order pole at $u=0$, and space of flat sections given by the Betti data. Finally, the Stokes data of the irregular singularity of the connection at $u=0$, controlling the jump of the asymptotic expansion of the periods $\int_\gamma e^{W/u} \alpha$ obtained by expressing $\gamma$ as a linear combination of Lefschetz thimbles for $\Rea(W/u)$,
is determined by the counts of 2d BPS states of $(X,W)$ (which are equal to Euler characteristics of the spaces of 2d BPS states reviewed in \S\ref{sec_2d_bps} for the construction of $FS(X,W)$). 

It is conjectured in \cite{KKP} that there is more generally an entirely categorical way to produce a non-commutative Hodge structure from a smooth proper dg-category $C$. For example, the Dolbeault data should be the Hochschild homology $HH(C)$ and the de Rham data should be the periodic cyclic homology $HC(C)$. We propose that this construction should have a 2-categorical analogue. For example, if $C$ is a Calabi-Yau 2-category, as $RW(M,\Omega_I)$, one can define its Hochschild homology $HH(C)$ as the category obtained by evaluating the corresponding 3d TQFT on the circle $S^1$, and similarly
its periodic cyclic homology $HC(C)$ as the category over $\CC(\!(u)\!)$ obtained by $S^1$-equivariant compactification of the 3d TQFT over $S^1$.

\begin{conjecture}
\label{conj_rw}
Let $(M,\Omega_I)$ be a holomorphic symplectic manifold. Then,
\begin{itemize}
    \item[(i)] 
the Hochschild homology of the 2-category $RW(M,\Omega_I)$ is the derived category of coherent sheaves on $(M,I)$:
\[ HH(RW(M,\Omega_I))=\mathrm{D^b Coh}(M,I)
\,.\]
\item[(ii)] the periodic cyclic homology of the 2-category $RW(M,\Omega_I)$ is the category of DQ-modules \cite{MR1855264, KS_DQ}
obtained by deformation quantization of $(M,\Omega_I)$:
\[ HP(RW(M,\Omega_I))=DQ(M,\Omega_I)\,.\]
\end{itemize}
\end{conjecture}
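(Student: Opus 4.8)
The plan is to interpret both sides through the $3$-dimensional Rozansky-Witten topological field theory and to compute the left-hand sides by dimensional reduction along a circle. By the cobordism hypothesis \cite{lurie}, the Calabi-Yau $2$-category $RW(M,\Omega_I)$ determines a $3$d TQFT $Z$ with $Z(\mathrm{pt})=RW(M,\Omega_I)$; by the definitions of $HH$ and $HP$ recalled just above the statement, $HH(RW(M,\Omega_I))$ is the $1$-category $Z(S^1)$ attached by this theory to the circle, and $HP(RW(M,\Omega_I))$ is its $\C(\!(u)\!)$-linear enhancement obtained by $S^1$-equivariant compactification. The whole statement thus reduces to identifying the dimensionally-reduced theory on $S^1$, first non-equivariantly for $(i)$ and then equivariantly for $(ii)$. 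I emphasize that this is the \emph{categorified} Hochschild homology (a $1$-category), one level above the familiar computation of $HH(\mathrm{D^b Coh}(M))$ as polyvector fields.

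For part $(i)$, I would argue that the dimensional reduction of $3$d Rozansky-Witten theory on $S^1$ is the $2$d B-model sigma model into $(M,I)$, so that $Z(S^1)$ is its category of B-branes. To make this precise I would proceed locally: both $\mathrm{D^b Coh}(M,I)$ and the assignment $U \mapsto HH(RW(U,\Omega_I))$ should form sheaves of categories on $M$, so it suffices to match them on Darboux charts $U \cong T^*\C^n$. On such a chart the Kapustin-Rozansky-Saulina description \cite{MR2522724, MR2771578} presents $RW(U,\Omega_I)$ through matrix factorizations, and the categorified Hochschild homology of this local model can be computed from the known Hochschild invariants of matrix-factorization categories; the expected output is the category of coherent sheaves on $U$. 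Gluing these local identifications, compatibly with the Calabi-Yau/Serre structure, would yield $HH(RW(M,\Omega_I))=\mathrm{D^b Coh}(M,I)$.

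For part $(ii)$, the $S^1$-equivariant compactification introduces the formal parameter $u$, which I expect to play the role of the quantization parameter: switching on rotation-equivariance is an $\Omega$-background deformation of the reduced B-model, and such a deformation is precisely what quantizes a holomorphic symplectic target. Following the same local-to-global scheme, I would show that over a Darboux chart the $u$-linear category produced by the equivariant theory is the local category of DQ-modules \cite{KS_DQ}, with $u$ matching the Fedosov/Moyal deformation of $\Omega_I$, and then glue to obtain $HP(RW(M,\Omega_I))=DQ(M,\Omega_I)$. The degeneration $u\to 0$ should recover part $(i)$, giving a consistency check between the two halves of the conjecture.

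The hard part will be making any of this rigorous. Realizing $RW(M,\Omega_I)$ as a genuinely fully dualizable object of a symmetric monoidal $(\infty,3)$-category — which is what licenses the appeal to the cobordism hypothesis and to the operations $Z(S^1)$ and its equivariant refinement — is not yet available, and rests on the same analytic foundations (compactness for the Fueter equation, construction of the matrix-factorization $2$-category) that remain conjectural here. Even granting the TQFT, the equivariant matching in $(ii)$ is delicate: one must control the localization that generates $u$ and check that the resulting noncommutative product assembles into a globally defined DQ-algebra rather than a merely formal one. I expect these two points — the existence of the extended TQFT and the precise equivariant identification in $(ii)$ — to be the principal obstacles.
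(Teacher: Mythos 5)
The statement you were asked to prove is a \emph{conjecture} in the paper: the author gives no proof, only the definitions needed to state it (for a Calabi--Yau 2-category $C$, $HH(C)$ is \emph{defined} as the value of the associated 3d TQFT on $S^1$, and $HP(C)$ as its $S^1$-equivariant compactification over $\C(\!(u)\!)$), followed by a remark that part (i) is the ``known expectation'' that Rozansky--Witten theory compactified on $S^1$ is the B-model, citing Kapustin--Rozansky--Saulina \cite{MR2522724, MR2771578}, while the formulation of part (ii) ``might be new.'' So there is no proof to compare yours against; the relevant questions are whether your outline is consistent with the paper's intended justification and whether it adds anything. It is consistent: your reduction of both parts to identifying $Z(S^1)$, first non-equivariantly then equivariantly, is exactly the paper's setup, including the appeal to \cite{lurie} for the existence of the TQFT from the Calabi--Yau 2-category; your identification of the circle reduction with the B-model into $(M,I)$ is precisely the KRS expectation invoked for (i); and your proposal that the equivariant parameter $u$ plays the role of an $\Omega$-background/quantization parameter is a plausible mechanism for (ii) that the paper does not spell out (it motivates (ii) only by analogy with the de Rham data $\mathbb{H}^{*}(X, (\Omega_X^{*}, ud+dW \wedge-))$ of non-commutative Hodge structures). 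Your local-to-global scheme --- matching both sides on Darboux charts via matrix-factorization models and gluing as sheaves of categories, with the $u \to 0$ degeneration as a consistency check linking (i) and (ii) --- goes genuinely beyond what the paper says and is the kind of skeleton an actual proof would need.

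That said, what you wrote is a program, not a proof, and your own list of obstacles is accurate and essential. The 2-category $RW(M,\Omega_I)$ has not been constructed as a mathematical object, let alone as a fully dualizable one: in this paper its definition rests on the conjectural infinite-dimensional category of matrix factorizations of the holomorphic action functional, so neither $Z(S^1)$ nor its equivariant refinement currently exists in a form to which your Darboux-chart computation could be applied. Even granting the TQFT, the step in (ii) where equivariant localization is supposed to produce a globally glued, honest DQ-algebra quantizing $(M,\Omega_I)$ (rather than chartwise quantizations with no canonical gluing --- gluing DQ-algebroids is a known subtlety in \cite{KS_DQ, MR1855264}) is itself an open problem. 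In its present state your text has the same epistemic status as the paper's conjecture, which is the most that can honestly be claimed here; treat it as a correct articulation of the intended heuristic, plus a sensible proof strategy, rather than as a proof.
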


Conjecture \ref{conj_rw}(i) is the known expectation that the Rozansky-Witten theory compactified on $S^1$ is the B-model \cite{MR2522724, MR2771578}, whereas the formulation of Conjecture \ref{conj_rw}(ii) might be new.

Finally, we describe how holomorphic Floer theory should be related to the Betti data of the hypothetical categorical non-commutative Hodge structure attached to $RW(\Omega_I,W)$.
For every $u \in \CC^{*}$, $\omega_u=\Rea(u^{-1}\Omega_I)$ is a symplectic form on $M$, and so one can consider
the Fukaya category of the symplectic manifold $(M,\omega_u)$. The Fukaya category is generally defined over a Novikov ring whose variable keeps track of the area of $J_u$-holomorphic curves.
From now on, assume that for $u>0$ small enough, the Fukaya category converges when we replace the Novikov parameter $q^\beta$ by
\[ e^{-\int_{\beta}(\omega_u+iB_u)}=e^{-\frac{1}{u}\int_\beta \Omega_I}\,,\]
where $B_u:=\Ima(u^{-1}\Omega_I)$, and we denote by $F(M,\omega_u, B_u)$ the corresponding category. Viewed as a family over the formal punctured disk $\CC(\!(u)\!)$, we obtain a category $F(M,\omega_u, B_u)\otimes \CC(\!(u)\!)$ over $\CC(\!(u)\!)$ where the contributions of $J_u$-holomorphic curves disappear because proportional to $e^{-\frac{1}{u}\int_\beta \Omega_I}$, which has zero Taylor expansion in $u$. Note that $J_u$-holomorphic disks only exist when the argument of $u$ is in the complement of the countable set of arguments of complex numbers $\int_\beta \Omega_I$ indexed by relative homology classes $\beta \in H_2(M,L_1\cup L_2,\ZZ)$ between 
holomorphic Lagrangian submanifolds $L_1$ and $L_2$.
The following conjecture is motivated by the known finite-dimensional story for $FS(X,W)$ and $MF(X,W)$.

\begin{conjecture} \label{conj_dq}
The Betti data for the non-commutative Hodge theory of $RW(M,\Omega_I)$ is induced by a Riemann-Hilbert type isomorphism 
\begin{equation} \label{eq_DQ_HFT}
DQ(M,\Omega_I)\simeq F(M,\omega_u, B_u)\otimes \CC(\!(u)\!)\,. \end{equation}
Moreover, the Stokes data is induced by the holomorphic dependence of $F(M,\omega_u, B_u)$ in $u$ and is determined by the counts of $J_u$-holomorphic curves entering the definition of the spaces of 2d BPS states of the holomorphic action functional described in \S\ref{sec_2d_bps_HFT} for the construction of $\mathrm{Ft}(M,\Omega_I)$.
\end{conjecture}

A relation between deformation quantization and the Fukaya category is predicted from physics arguments by Kapustin in \cite{kapustin2005branes}, and related works include \cite{BF_DQ,J_DQ,M_DQ, P_DQ, SV}.
Moreover, such a relation combined with the Stokes data interpretation of $J_u$-holomorphic curves is one of the main points of the work of Kontsevich-Soibelman \cite{KS_HFT} on non-perturbative quantization of $(M,\Omega_I)$ and the topic of resurgence. 
Conjecture \ref{conj_dq} suggests a formulation of this relation in terms of 
non-commutative Hodge theory for $RW(M,\Omega_I)$.

\subsection{Complex and holomorphic Atiyah-Floer conjectures}

Holomorphic Floer theory is an example of infinite-dimensional complex Morse theory. In this section, we describe conjectural relations between holomorphic Floer theory and the two other examples of infinite-dimensional Morse theory briefly reviewed in \S\ref{sec_inf_ex} and given by complex-valued Chern-Simons theory and holomorphic Chern-Simons theory. These conjectures are respectively complex and holomorphic analogues of the Atiyah-Floer conjecture relating the two examples of infinite-dimensional Morse theory given by Lagrangian Floer theory and Chern-Simons theory (Floer homology for 3-manifolds) \cite{AF}.

Let $X$ be a compact 3-manifold with a choice of Heegaard splitting $X=X_1 \cup_\Sigma X_2$ along a compact surface $\Sigma$. Let $G_\CC$ be the complexification of a compact Lie group. The character variety $M_{\Sigma,G_\CC}$ parametrizing flat $G_\CC$-connections on $\Sigma$ is naturally a holomorphic symplectic manifold, and the locus $L_{X_1}$ (resp.\ $L_{X_2}$) of $G_\CC$
flat connections on $\Sigma$ extending to $X_1$ (resp.\ $X_2$) is a holomorphic Lagrangian submanifold of $M_{\Sigma,G_\CC}$.
Holomorphic Floer theory applied to the pair of holomorphic Lagrangian submanifolds $L_1$ and $L_2$ conjecturally produces a
Hom category $\Hom_{\mathrm{Ft}(M_{\Sigma,G_\CC})}(L_{X_1},L_{X_2})$
in the Fueter 2-category of $M_{\Sigma,G_\CC}$. On the other hand, complex-valued Chern-Simons theory conjecturally attaches to the 3-manifold $X$ a category $CS(X)$ constructed from counts of flat $G_\CC$-connections on $X$, and of solutions to the Kapustin-Witten and Haydys-Witten equations on $X\times \RR$ and $X\times \RR^2$.

\begin{conjecture}\label{conjAF1}
Let $X$ be a compact 3-manifold with a choice of Heegaard splitting $X=X_1 \cup_\Sigma X_2$ along a compact surface $\Sigma$, and let $G_\CC$ be the complexification of a compact Lie group. Then, there exists an equivalence of categories
\[ \Hom_{\mathrm{Ft}(M_{\Sigma,G_\CC})}(L_{X_1},L_{X_2}) \simeq CS(X) \,.\]
\end{conjecture}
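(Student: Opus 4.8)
The plan is to prove the equivalence by an adiabatic (neck-stretching) limit, categorifying and complexifying the strategy that Dostoglou and Salamon used to establish the original Atiyah--Floer conjecture \cite{AF}. Both sides of the claimed equivalence are, by construction, Fukaya--Seidel categories of complex Morse data: the right-hand side is the Fukaya--Seidel category of the complex Chern--Simons functional on $G_\C$-connections on $X$, whose critical points, $\zeta$-solitons and $\zeta$-instantons are respectively the flat connections, the Kapustin--Witten solutions on $X \times \RR$, and the Haydys--Witten solutions on $X \times \RR^2$ of \S\ref{sec_inf_ex}; the left-hand side is the Fukaya--Seidel category $FS(\widetilde{\cP}_0, W)$ of the holomorphic action functional on paths between $L_{X_1}$ and $L_{X_2}$ in $M_{\Sigma,G_\C}$. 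The first thing to record is a compatibility of functionals: restricting the complex Chern--Simons functional to connections that are flat away from a neck $\Sigma \times [-T,T] \subset X$ and reducing along the neck produces exactly the holomorphic action functional $W$ of \S\ref{sec_hol_action} (the holomorphic avatar of Goldman's symplectic action on $M_{\Sigma,G_\C}$). Consequently the critical values match, and hence the phases $\zeta$ governing the soliton and instanton equations on the two sides agree.

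With this in hand I would match the three levels of structure in turn. At the level of objects, flat $G_\C$-connections on $X$ correspond to points of $L_{X_1} \cap L_{X_2}$ by a Mayer--Vietoris/van Kampen argument: a flat connection on $X$ restricts to a flat connection on $\Sigma$ that extends over both handlebodies. At the level of Hom-complex generators, the adiabatic limit $T \to \infty$ should produce a bijection between Kapustin--Witten solitons on $X \times \RR$ and the $J_\zeta$-holomorphic strips $\RR \times [0,1] \to M_{\Sigma,G_\C}$ with boundary on $L_{X_1} \cup L_{X_2}$ furnished by Lemma \ref{lem_soliton}; and at the level of the differential and the higher operations, the same stretching should identify the Haydys--Witten solutions on $X \times \RR^2$ with the 3d $\zeta$-Fueter maps of Lemma \ref{lem_instanton}. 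Each matching is a moduli-space statement: an a priori compactness along the stretched neck (giving convergence of gauge-theoretic solutions to curves, respectively Fueter maps, in $M_{\Sigma,G_\C}$), a gluing construction in the reverse direction, and an identification of Fredholm indices together with coherent orientations.

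Finally I would assemble these bijections into an equivalence of $A_\infty$-categories. Both $CS(X)$ and $FS(\widetilde{\cP}_0,W)$ are produced from their soliton/instanton data by the single universal recipe of Gaiotto--Moore--Witten and Kapranov--Kontsevich--Soibelman recalled in \S\ref{sec_FS_category}, completed by the categorical Novikov truncation of \S\ref{sec_cat-pairs_lag} (needed on both sides, since flat $G_\C$-connections on $X$, equivalently the critical points of $W$ on the universal cover, form an infinite set). Once the generating solitons are matched $\zeta$-compatibly, the convex-polygon combinatorics, the Maurer--Cartan element $\sum_P \mu_P$, and hence every higher operation $m_k$ are determined by the same counts on the two sides; the truncated categories $FS^{<N}$ then agree, and passing to the limit $N \to \infty$ yields the desired equivalence.

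The main obstacle is analytic, and it is severe. The adiabatic-limit and gluing arguments above rest on compactness theorems for the Kapustin--Witten and, especially, the Haydys--Witten equations that are not available: Taubes' work shows that solutions of Kapustin--Witten-type equations can degenerate into $\mathbb{Z}_2$-harmonic spinors, and the non-compactness of $G_\C$ removes the usual energy bounds, so the neck-stretching compactness for the 5-dimensional Haydys--Witten equation converging to the 3d Fueter equation is genuinely open even for compact structure groups. Compounding this, neither category in the statement is yet rigorously defined: one must first establish the compactness and transversality foundations for the Fueter and Haydys--Witten moduli problems --- the program begun by Doan--Rezchikov --- before the equivalence can even be formulated precisely, let alone proved.
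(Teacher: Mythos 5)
This statement is a conjecture in the paper, not a theorem: the paper offers no proof, only a remark immediately following Conjecture \ref{conjAF2} stating that ``one expects Conjectures \ref{conjAF1} and \ref{conjAF2} to follow from an adiabatic dimensional reduction of the higher dimensional gauge theoretic equations.'' Your proposal is precisely that strategy, worked out in the natural way --- matching flat $G_\C$-connections to intersection points, Kapustin--Witten solutions to $J_\zeta$-holomorphic strips, Haydys--Witten solutions to Fueter maps, and assembling the equivalence through the Gaiotto--Moore--Witten/Kapranov--Kontsevich--Soibelman recipe with the Novikov-style truncation of \S\ref{sec_cat-pairs_lag} --- so you are following essentially the same route the paper envisions. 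Your closing paragraph is also the correct assessment: neither side of the equivalence is rigorously defined at present, and the compactness theory for the Haydys--Witten and Fueter equations needed for the neck-stretching argument does not yet exist, which is exactly why the paper states this as a conjecture rather than proving it; your text should be read as a program, not a proof, and on that understanding it is sound.
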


Let $X$ be a compact Calabi-Yau 3-fold degenerating to the union $X_1 \cup_\Sigma X_2$ of two Fano 3-folds transversally glued along 
a common anticanonical $K3$ surface $\Sigma$. The moduli space $M_{\Sigma,\gamma}$ of stable holomorphic vector bundles on $\Sigma$ with Chern classes $\gamma$ is naturally a holomorphic symplectic manifold, and the locus $L_{X_1}$ (resp.\ $L_{X_2}$) of holomorphic vector bundles on $\Sigma$ extending to $X_1$ (resp.\ $X_2$) is a holomorphic Lagrangian submanifold of $M_{\Sigma,\gamma}$.
Holomorphic Floer theory applied to the pair of holomorphic Lagrangian submanifolds $L_1$ and $L_2$ conjecturally produces a
Hom category $\Hom_{\mathrm{Ft}(M_{\Sigma,\gamma})}(L_{X_1},L_{X_2})$
in the Fueter 2-category of $M_{\Sigma,\gamma}$. On the other hand, holomorphic Chern-Simons theory conjecturally attaches to the Calabi-Yau 3-fold $X$ and to the Chern classes $\gamma$ a category $HCS(X,\gamma)$ constructed from counts of stable holomorphic vector bundles on $X$, and of $G_2$-instantons and $Spin(7)$-instantons on $X\times \RR$ and $X\times \RR^2$.

\begin{conjecture}\footnote{Conjecture \ref{conjAF2} was suggested to the author by Richard Thomas. The uncategorified statement relating counts of holomorphic vector bundles and Lagrangian intersection numbers is discussed in \cite[\S 4]{DTgauge}.}
\label{conjAF2}
Let $X$ be a compact Calabi-Yau 3-fold degenerating to the union $X_1 \cup_\Sigma X_2$ of two Fano 3-folds transversally glued along 
a common anticanonical $K3$ surface $\Sigma$.
Then, there exists an equivalence of categories \[ \Hom_{\mathrm{Ft}(M_{\Sigma,\gamma})}(L_{X_1},L_{X_2}) \simeq HCS(X,\gamma) \,.\]
\end{conjecture}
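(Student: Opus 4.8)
The statement is a holomorphic Atiyah--Floer conjecture, so the natural strategy is an adiabatic degeneration argument in the spirit of Dostoglou--Salamon, transported from the three-dimensional gauge-theory setting to the present eight-dimensional one. The plan is to work with the degenerating family of Calabi--Yau structures on $X$ in which the $K3$ surface $\Sigma$ is stretched into a long cylindrical neck $\Sigma \times [-1/\eps, 1/\eps]$ joining $X_1$ and $X_2$, and to let $\eps \to 0$. Here one exploits the fact that the hyperkähler structure on $M_{\Sigma,\gamma}$ is precisely the one induced on the moduli space of stable bundles from the hyperkähler rotations of $\Sigma$: the complex structure $I$ comes from the complex structure on the $K3$, while the auxiliary $J$, $K$ come from its twistor rotations. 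One therefore expects the transverse geometry of the stretched neck to reproduce, in the limit, exactly the twistor family of complex structures $J_\zeta$ appearing in Lemmas \ref{lem_soliton} and \ref{lem_instanton}, with the phase $\zeta$ dictated by \eqref{eq_zeta_alpha}.

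First I would establish the correspondence of objects. A holomorphic bundle on $X$ of Chern class $\gamma$ restricts to a holomorphic bundle on $\Sigma$ that extends holomorphically across both $X_1$ and $X_2$, hence defines a point of $L_{X_1} \cap L_{X_2} \subset M_{\Sigma,\gamma}$; conversely, a Mayer--Vietoris and gluing argument on the degenerating family should produce, for $\eps$ small, a bijection between isomorphism classes of stable holomorphic bundles on $X$ and transverse intersection points of $L_{X_1}$ and $L_{X_2}$, with the decorations $\gamma \in \pi_1(p_0,p)$ recording the winding of the multivalued holomorphic Chern--Simons functional along the neck. This matches the objects $(p,\gamma)$ of $FS(\widetilde{\cP}_0,W)$ on the Fueter side.

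The heart of the argument is the adiabatic limit for the flow equations. I would show that, as $\eps \to 0$, finite-energy $G_2$-instantons on $X \times \RR$ (the $\zeta$-soliton equation of \S\ref{sec_inf_ex}) concentrate along the neck and converge to $J_\zeta$-holomorphic strips $\RR \times [0,1] \to M_{\Sigma,\gamma}$ with boundary on $L_{X_1}\cup L_{X_2}$, the cross-section factor $[0,1]$ being the image of the stretched neck direction, thereby identifying the soliton generators of the Hom-complexes. One dimension up, $Spin(7)$-instantons on $X \times \RR^2$ should converge to solutions of the 3d $\zeta$-Fueter equation (Lemma \ref{lem_instanton}), identifying the differentials $n_{ij}$ of \eqref{eq_diff} and, via the polygon counts of \S\ref{sec_FS_category}, all higher $A_\infty$ operations. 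For each fixed small $\eps$ I would prove a bijection of the relevant moduli spaces (surjectivity by gluing, injectivity and properness by a priori energy estimates together with the area identity of Lemma \ref{lem_area}), so that the gauge-theoretic complex computing $HCS(X,\gamma)$ and the Fueter complex computing $\Hom_{\mathrm{Ft}(M_{\Sigma,\gamma})}(L_{X_1},L_{X_2})$ agree at the chain level, first on the truncations $FS^{<N}$ controlled by Lemmas \ref{lem_bound1}--\ref{lem_bound_2} and then in the categorical limit $N \to \infty$.

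The main obstacle is analytic and twofold. First, the Fueter and $Spin(7)$-instanton equations are not of standard elliptic Floer type, and the compactness and gluing theory needed to make the moduli-space bijections rigorous is exactly the hard problem currently being developed by Doan--Rezchikov; bubbling of the transverse $K3$ directions along the neck, and the absence of a clean Fredholm and transversality package, are the principal dangers. Second, the adiabatic limit itself is delicate: one must show that no energy escapes into the $X_1$, $X_2$ ends, that the degenerating Calabi--Yau metric near $\Sigma$ genuinely produces the hyperkähler twistor geometry of $M_{\Sigma,\gamma}$ in the limit, and that the correspondence respects the full $A_\infty$-structure rather than merely passing to homology --- upgrading a homological Atiyah--Floer statement to an \emph{equivalence of categories} is itself a substantial further difficulty.
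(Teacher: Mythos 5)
This statement is a conjecture in the paper, not a theorem: the paper offers no proof of it at all. Its entire ``proof content'' is the remark immediately following, which says that, as for the usual Atiyah--Floer conjecture, one expects Conjecture \ref{conjAF2} to follow from an adiabatic dimensional reduction of the higher-dimensional gauge-theoretic equations. Your proposal is precisely an elaboration of that one-sentence strategy --- neck-stretching along $\Sigma$, convergence of $G_2$-instantons on $X\times\RR$ to $J_\zeta$-holomorphic strips in $M_{\Sigma,\gamma}$, and of $Spin(7)$-instantons on $X\times\RR^2$ to solutions of the 3d Fueter equation --- so in terms of approach you are exactly on the route the author envisions, and your identification of which equation degenerates to which is consistent with \S\ref{sec_inf_ex} and Lemmas \ref{lem_soliton}--\ref{lem_instanton}.

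That said, what you have written is a program, not a proof, and you correctly flag the reasons yourself. Beyond the analytic package you mention (compactness, gluing, Fredholm and transversality theory for the Fueter and $Spin(7)$ equations, control of energy escaping into $X_1$, $X_2$), there is a more basic gap: neither side of the asserted equivalence is rigorously defined. The category $HCS(X,\gamma)$ is itself conjectural (it presupposes a categorification of holomorphic Chern--Simons theory via $G_2$/$Spin(7)$-instanton counts), and $\Hom_{\mathrm{Ft}(M_{\Sigma,\gamma})}(L_{X_1},L_{X_2})$ rests on Conjecture \ref{conj_holom}, whose analytic foundations are only beginning to be developed in \cite{DR}. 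So a chain-level comparison of moduli spaces, however carefully executed, cannot currently be completed or even precisely formulated; any honest write-up must either construct both categories first or state the comparison conditionally on those constructions. Since the statement remains open, your proposal should be read as a reasonable research plan matching the author's intent, not as a verifiable argument.
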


\begin{remark}
As for the usual Atiyah-Floer conjecture, one expects Conjectures \ref{conjAF1} and \ref{conjAF2} to follow from an adiabatic dimensional reduction of the higher dimensional gauge theoretic equations.
\end{remark}

\section{2d $\mathcal{N}=(2,2)$ theories and  holomorphic Floer theory}
\label{sec_2d}

In \S\ref{sec:details_HFT}, we introduced holomorphic Floer theory as a Landau-Ginzburg model with an infinite dimensional target space. In this section, we explain 
that holomorphic Floer theory is actually relevant to the study of usual 
2d $\mathcal{N}=(2,2)$ field theories such as Landau-Ginzburg models with finite dimensional target spaces. The conjectures that we formulate here will motivate our conjectures on the relation between holomorphic Floer theory and DT invariants in \S\ref{sec_dt} (see Remark \ref{rem_2d_4d}).

\subsection{General 2d $\mathcal{N}=(2,2)$ theories}

Let $\mathcal{T}$ be a massive 2-dimensional 
$\mathcal{N}=(2,2)$
field theory with $n$ vacua. We denote by $\mathrm{Br}(\mathcal{T})$ its $A_\infty$-category of boundary conditions (or branes) as considered in \cite{GMWinfrared}. When $\mathcal{T}$ is the Landau-Ginzburg model 
defined by a holomorphic function $W \colon X \rightarrow \CC$
on a Kähler manifold $X$, then $\mathrm{Br}(\mathcal{T})$ is the Fukaya-Seidel category of $(X,W)$. The goal of this section is to argue that for general $\mathcal{T}$, the category of boundary conditions
$\mathrm{Br}(\mathcal{T})$ can be extracted from a Fueter 2-category constructed from the moduli space of deformations of 
$\mathcal{T}$. 

Let $\mathcal{N}_{\mathcal{T}}$ be the generically semi-simple Frobenius manifold parametrizing  deformations of 
$\mathcal{T}$ \cite{CV, Dub}. 
In particular, the tangent bundle $T\mathcal{N}_{\mathcal{T}}$
has a structure of sheaf of commutative algebras over $\mathcal{N}_{\mathcal{T}}$, coming from the identification of the deformations of $\mathcal{T}$ with the the chiral ring. The natural quotient map 
\[ \mathrm{Sym}(T \mathcal{N}_{\mathcal{T}}) \rightarrow  
T \mathcal{N}_{\mathcal{T}} \]
induces an inclusion 
\[ \mathcal{L}=\mathrm{Spec}(T \mathcal{N}_{\mathcal{T}}) \hooklongrightarrow 
T^{*} \mathcal{N}_{\mathcal{T}} = \mathrm{\Spec}(\mathrm{Sym}(T \mathcal{N}_{\mathcal{T}})) \,.\]
The assumption that $\mathcal{T}$ is massive with $n$ vacua implies that the spectral cover 
$\mathcal{L} \rightarrow \mathcal{N}_{\mathcal{T}}$ is generically of degree $n$. Furthermore, the inclusion 
$\mathcal{L} \subset T^{*} \mathcal{N}_{\mathcal{T}}$
naturally realizes $\mathcal{L}$ as a holomorphic Lagrangian submanifold of the holomorphic symplectic variety $T^{*} \mathcal{N}_{\mathcal{T}}$
by \cite[Corollary 1.9]{Aud}. On the other hand, let $F_{\mathcal{T}} \subset T^{*} \mathcal{N}_{\mathcal{T}}$ be the holomorphic Lagrangian 
submanifold given by the cotangent fiber over the point of 
$\mathcal{N}_{\mathcal{T}}$ corresponding to $\mathcal{T}$.
The intersection $F_{\mathcal{T}} \cap \mathcal{L}$
consists of $n$ points in natural correspondence with the 
$n$ vacua of $\mathcal{T}$, see Figure \ref{fig4}.

\begin{figure}[ht!]
\centering
\includegraphics[width=50mm]{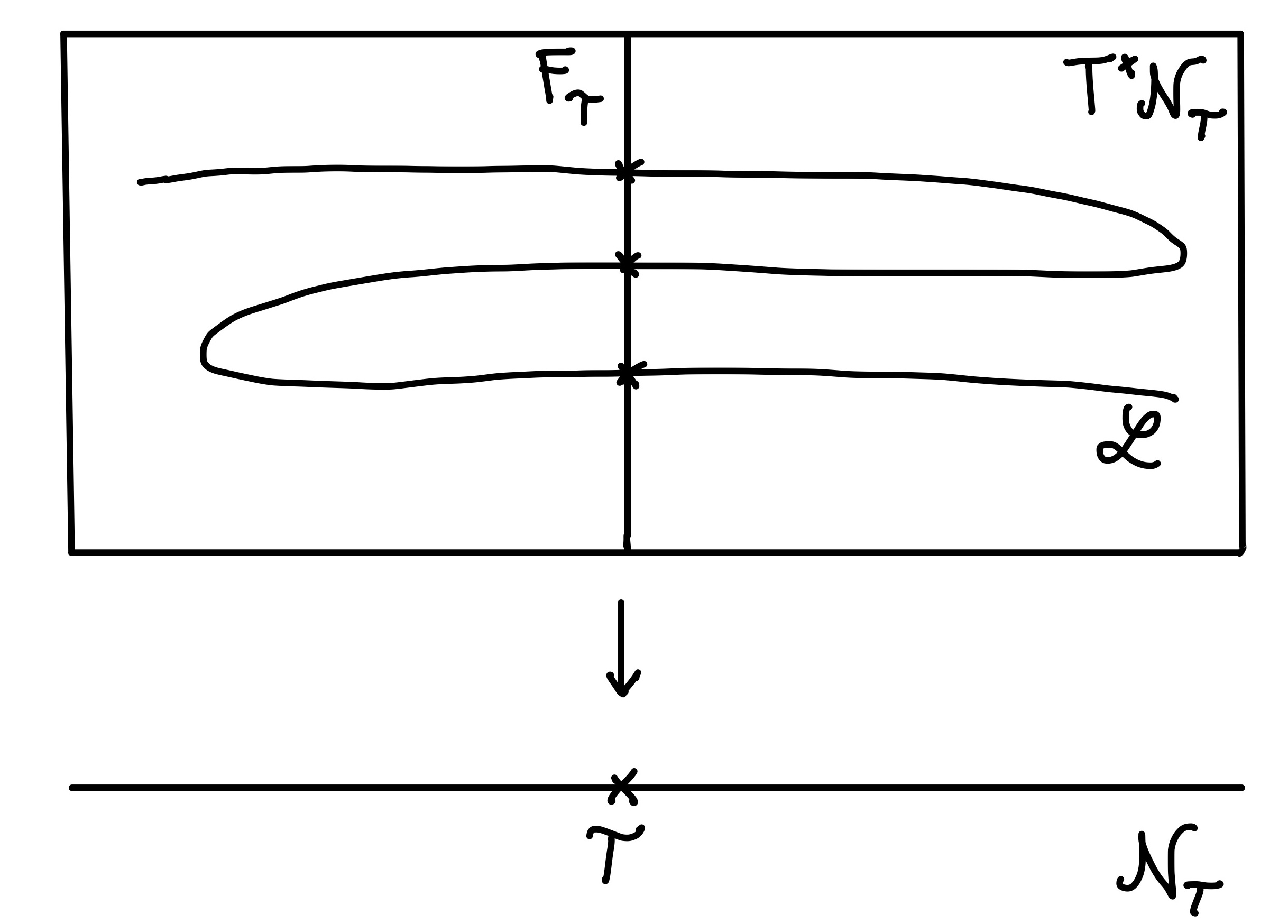}
\caption{The spectral cover $\mathcal{L}$ and the cotangent fiber $F_{\mathcal{T}}$. \label{fig4}}
\end{figure}

\begin{conjecture}\label{conj_general_2d} The $A_\infty$-category of boundary conditions $\mathrm{Br}(\mathcal{T})$ of a massive 
2-dimensional $\mathcal{N}=(2,2)$ field theory $\mathcal{T}$ is equivalent to the Hom-category in the Fueter 2-category 
$\mathrm{Ft}(T^{*}\mathcal{N}_{\mathcal{T}})$ of 
between the spectral cover $\mathcal{L}$ and the cotangent fiber $F_{\mathcal{T}}$: 
\begin{equation}\label{eq_conj_general_2d}
\mathrm{Br}(\mathcal{T}) \simeq \mathrm{Hom}_{\mathrm{Ft}(T^{*}\mathcal{N}_{\mathcal{T}})}(\mathcal{L}, F_{\mathcal{T}}) \,.\end{equation}
\end{conjecture}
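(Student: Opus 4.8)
The plan is to show that both sides of \eqref{eq_conj_general_2d} are the Gaiotto--Moore--Witten web category of §\ref{sec_FS_category} built from one and the same package of vacuum data, and then, where a realizing Landau--Ginzburg model exists, to reduce the statement to Conjecture \ref{conj_dim_red}. Recall that for a massive $\mathcal{N}=(2,2)$ theory the category $\mathrm{Br}(\mathcal{T})$ is reconstructed from three pieces of data attached to its $n$ vacua: the complex central charges (critical values) $u_i \in \mathbb{C}$, the soliton spaces $H(i,j)$ between pairs of vacua, and the instanton products $\mu_P$. The Cecotti--Vafa/Dubrovin theory \cite{CV, Dub} packages exactly this data into the germ at the base point $b_0$ of the Frobenius manifold $\mathcal{N}_{\mathcal{T}}$: the $u_i$ are its canonical coordinates, and the soliton degeneracies are encoded in the Stokes data of the associated $tt^{*}$ connection. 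It therefore suffices to prove that the right-hand side is reconstructed from the same germ.

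First I would identify the critical points and critical values on the right. By construction the critical points of the holomorphic action functional on paths from $\mathcal{L}$ to $F_{\mathcal{T}}$ are the transverse intersection points $\mathcal{L} \cap F_{\mathcal{T}}$, which are the $n$ vacua (Figure \ref{fig4}). Away from the discriminant the spectral cover is the union $\mathcal{L} = \bigcup_i \Gamma_{du_i}$ of the graphs of the differentials of the canonical coordinates, since the characters of the semisimple chiral ring $T_{b_0}\mathcal{N}_{\mathcal{T}}$ are exactly the $du_i$. Hence the tautological Liouville $1$-form $\lambda$ on $T^{*}\mathcal{N}_{\mathcal{T}}$ restricts to $du_i$ on the $i$-th sheet, and integrating $\lambda$ shows that the value of the holomorphic action functional at the $i$-th intersection point equals $u_i(b_0)$ up to an overall additive constant. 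This matches the central charges of $\mathcal{T}$ and, in particular, pins down the phases $\zeta_{(p,\gamma),(p',\gamma')}=(u_i-u_j)/|u_i-u_j|$ governing the web construction.

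The heart of the argument is to match the soliton and instanton data. By Lemma \ref{lem_soliton} the generators of $C((p_i,\gamma),(p_j,\gamma'))$ are $J_\zeta$-holomorphic strips in $T^{*}\mathcal{N}_{\mathcal{T}}$ with boundary on $\mathcal{L}\cup F_{\mathcal{T}}$, and by Lemma \ref{lem_instanton} the differentials and higher products count solutions of the 3d $\zeta$-Fueter equation. I would then invoke an adiabatic/dimensional-reduction argument — the cotangent-bundle degeneration underlying Conjecture \ref{conj_dim_red}, together with the moduli-space comparison of Doan--Rezchikov \cite{DR} — to exhibit these strips and Fueter maps in oriented bijection with the solitons and $\zeta$-instantons of the base Frobenius structure, hence with the BPS solitons and instanton products of $\mathcal{T}$ dictated by its $tt^{*}$ data. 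Combined with the previous step, every term of the Gaiotto--Moore--Witten polygon complex on the right then coincides with its counterpart for $\mathrm{Br}(\mathcal{T})$, yielding the claimed $A_\infty$-equivalence.

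The main obstacle is precisely this last matching. For general $\mathcal{T}$ there is no ambient target $X$, so the soliton/instanton identification cannot be carried out by a direct geometric comparison of two Landau--Ginzburg models and must instead pass through the \emph{universality} of the Frobenius manifold: one must prove that $\mathrm{Br}(\mathcal{T})$ depends only on the germ of $\mathcal{N}_{\mathcal{T}}$ at $b_0$, and that the holomorphic-strip counts for the pair $(\mathcal{L},F_{\mathcal{T}})$ recover the Stokes data of the $tt^{*}$ connection. A cleaner route, whenever a realizing model $W\colon X\to\mathbb{C}$ exists so that $\mathrm{Br}(\mathcal{T})=FS(X,W)$, is to deform the pair $(\mathcal{L},F_{\mathcal{T}})$ through the total space of the unfolding until it becomes a zero-section/graph-of-$dW$ pair and apply Conjecture \ref{conj_dim_red}, invoking the expected invariance of the Fueter $2$-category under such Hamiltonian isotopies. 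Establishing this invariance, along with the requisite Gromov/Fueter compactness flagged by Doan--Rezchikov, is where the real analytic work lies.
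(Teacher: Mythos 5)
There is nothing to compare your attempt against: Conjecture \ref{conj_general_2d} is stated in the paper as an open conjecture, with no proof offered. The paper's only support for it is indirect --- the formal 2d--4d analogy of Remark \ref{rem_2d_4d}, and, for the class $\mathcal{S}$ variant (Conjecture \ref{conj_general_2d_4d}), the M-theory/spectral-network evidence. So your proposal must be judged on its own, and as a proof it has two concrete gaps beyond the analytic issues you already flag.

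First, your opening reduction is circular. You assert that Cecotti--Vafa/Dubrovin theory \cite{CV, Dub} packages ``exactly'' the data $(u_i, H(i,j), \mu_P)$ into the germ of $\mathcal{N}_{\mathcal{T}}$, so that it suffices to reconstruct the right-hand side from the same germ. But the Frobenius/$tt^{*}$ germ only encodes the canonical coordinates $u_i$ and the \emph{decategorified} soliton data --- the Stokes matrices, i.e.\ the indices $\mu_{ij} = \chi(H(i,j))$. It does not remember the soliton vector spaces $H(i,j)$ themselves, the instanton differentials, or the higher products $\mu_P$ entering the construction of \S\ref{sec_FS_category}. Two theories with the same Frobenius germ are not known (and should not be expected) to have equivalent brane categories, so the statement ``$\mathrm{Br}(\mathcal{T})$ depends only on the germ of $\mathcal{N}_{\mathcal{T}}$'' is not a lemma one can route the proof through: proving that the categorical data is recoverable from the geometry of $(\mathcal{L}, F_{\mathcal{T}}) \subset T^{*}\mathcal{N}_{\mathcal{T}}$ \emph{is} the content of the conjecture, not a reduction of it. (By contrast, your identification of $\mathcal{L}\cap F_{\mathcal{T}}$ with the vacua and of the critical values of the action functional with the $u_i$ --- via $\Omega = d\lambda$, $\lambda|_{F_{\mathcal{T}}}=0$ and $\lambda|_{\mathcal{L}} = du_i$ on the $i$-th sheet --- is correct and is exactly the decategorified matching the paper's setup around Figure \ref{fig4} presupposes.)

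Second, your ``cleaner route'' via Conjecture \ref{conj_dim_red} conflates two different cotangent bundles. Conjecture \ref{conj_dim_red} concerns the pair (zero section, graph of $dW$) in $T^{*}L$ where $L$ is the \emph{Landau--Ginzburg target}; Conjecture \ref{conj_general_2d} concerns the pair (spectral cover, cotangent fiber) in $T^{*}\mathcal{N}_{\mathcal{T}}$ where $\mathcal{N}_{\mathcal{T}}$ is the \emph{deformation space}. For an LG model $(X,W)$ these spaces have different dimensions in general (e.g.\ $X=\C$ with $W$ of degree $n+1$ gives $\dim X = 1$ but $\dim \mathcal{N}_{\mathcal{T}} = n$), so no isotopy inside one ambient space can carry one pair to the other. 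Moreover, even within $T^{*}\mathcal{N}_{\mathcal{T}}$, the spectral cover $\mathcal{L}$ is an $n$-sheeted multisection (the graph of the differential of the multivalued function with branches $u_i$) and $F_{\mathcal{T}}$ is a fiber; neither can be Hamiltonian-isotoped to the zero section or to the graph of a single-valued Morse function, which is what Conjecture \ref{conj_dim_red} requires. If you want a dimensional-reduction argument in the spirit of \cite{DR}, it would have to be a new statement adapted to the (multisection, fiber) geometry --- plausibly a worthwhile intermediate conjecture, but not the one stated in the paper.
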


\begin{remark}
Moving the point $\mathcal{T}$ in $\mathcal{N}_{\mathcal{T}}$, that is moving the cotangent fiber $F_{\mathcal{T}}$, the $A_\infty$-categories 
in \eqref{eq_conj_general_2d} should form a ``perverse sheaf of categories" over $\mathcal{N}_{\mathcal{T}}$, and so fit in the theory of perverse schobers \cite{kapranov2014perverse, kapranov2020perverse}.
\end{remark}

\subsection{Surface defects and spectral networks}

We also formulate below a version of Conjecture \ref{conj_general_2d}
in the context of theories of class $\mathcal{S}$.
Let $C$ be a Riemann surface (possibly with punctures), and
$B$ the base of the Hitchin fibration on the moduli space of $SL_n(\mathbb{C})$ Higgs bundles on $C$. For every point 
$b \in B$, we have the corresponding spectral curve 
$\Sigma_b \subset T^{*}C$. When $\Sigma_b$ is smooth, the projection 
$\Sigma_b \rightarrow C$ is a ramified cover of degree $n$. 

In physics language, $\Sigma_b$ parametrizes  
2-dimensional $\mathcal{N}=(2,2)$ field theories describing the ``canonical surface defect" 
probing the vacuum $b \in B$ of the 
4-dimensional $\mathcal{N}=2$ of class $\mathcal{S}$ obtained by compactifying the 
6-dimensional $\mathcal{N}=(2,0)$ superconformal field theory of type $A_{n-1}$ on $C$ \cite[\S 7]{GMN_2d_4d}. For every $x \in \Sigma_b$, we denote by $\mathcal{T}_{x,b}$ the corresponding 
2-dimensional $\mathcal{N}=(2,2)$ field theory and by $F_x$ the cotangent fiber of $T^* C$ over $x$.

\begin{conjecture} \label{conj_general_2d_4d} 
For every $b \in B$ and $x \in C$, the $A_\infty$-category of boundary conditions
$\mathrm{Br}(\mathcal{T}_{x,b})$ of the canonical surface defect 2-dimensional $\mathcal{N}=(2,2)$  field theory $\mathcal{T}_{x,b}$ is 
equivalent to the Hom-category in the Fueter 2-category $\mathrm{Ft}(T^* C)$ of $T^{*}C$ between the spectral curve 
$\Sigma_b$ and the cotangent fiber $F_x$:
\begin{equation} \mathrm{Br}(\mathcal{T}_{x,b}) \simeq \mathrm{Hom}_{\mathrm{Ft}(T^*C)}
(\Sigma_b,F_x) \,.\end{equation}
\end{conjecture}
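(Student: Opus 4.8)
The statement is the class-$\mathcal{S}$ analogue of Conjecture \ref{conj_general_2d}, but a literal reduction is awkward: there $T\mathcal{N}_{\mathcal{T}}$ is the full $n$-dimensional chiral ring, whereas here the base $C$ is one-dimensional and each fibre of $\Sigma_b$ already carries $n$ points, so $C$ plays the role of the Hitchin/UV curve rather than the Frobenius manifold. The cleaner route is to match the two $A_\infty$-categories term by term, exploiting that both sides are assembled from the same web/polygon combinatorics of Gaiotto-Moore-Witten and Kapranov-Kontsevich-Soibelman. I would first fix the geometric dictionary. The bundle $T^{*}C$ carries its tautological $1$-form $\lambda$ with $\Omega_I=d\lambda$, and I would equip it with a semiflat hyperk\"ahler structure for which $\Sigma_b$ and the fibre $F_x$ are both $I$-holomorphic Lagrangians: indeed $\Omega_I|_{\Sigma_b}=0$, since a holomorphic $2$-form vanishes on the complex curve $\Sigma_b$, and $\Omega_I|_{F_x}=0$, since $\lambda|_{F_x}=0$. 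The intersection $\Sigma_b\cap F_x$ consists of the $n$ sheets of $\Sigma_b\to C$ over $x$, in canonical bijection with the $n$ vacua of $\mathcal{T}_{x,b}$; and under the holomorphic action functional of \S\ref{sec_hol_action} the central charge $\int_{\gamma}\Omega_I$ of a class $\gamma\in\pi_1(p,p')$ reduces by Stokes to the period $\int_{r(\gamma)}\lambda$ of the Seiberg-Witten differential, matching the $2$d central charges recorded in \cite{GMN_2d_4d}.

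The decisive step is to identify the holomorphic curves underlying the Fueter construction with the spectral-network trajectories computing the $2$d-$4$d BPS solitons. By Lemma \ref{lem_soliton}, a generator of $C((p,\gamma),(p',\gamma'))$ is a $J_\zeta$-holomorphic strip $u\colon\RR\times[0,1]\to T^{*}C$ with boundary on $\Sigma_b\cup F_x$ and phase $\zeta=\zeta_{(p,\gamma),(p',\gamma')}$. I would project such a strip to $C$ and show that, for the semiflat metric, its image is an $ij$-trajectory of the spectral network of phase $\zeta$, that is a path along which $\zeta^{-1}(\lambda_i-\lambda_j)$ is real, interpolating between the sheets $i,j$ selected by $p$ and $p'$; conversely every finite such trajectory lifts to a holomorphic strip. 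Lemma \ref{lem_area} then identifies its area with $|\int_{r(\gamma)}\Omega_I|$, so that the generators of the complex of \S\ref{sec_2d_bps_HFT} are exactly the $2$d solitons enumerated by the spectral network of \cite{GMN1, GMN2, GMN_2d_4d}, with matching gradings and signs.

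Granting this soliton-level dictionary, the remaining structure matches formally. The differential \eqref{eq_diff} counts $3$d $\zeta$-Fueter maps, whose projections to $C$ are the instanton webs bounded by the WKB trajectories above, and these are precisely the configurations counted in the definition of $\mathrm{Br}(\mathcal{T}_{x,b})$. The higher $A_\infty$-operations are built in \S\ref{sec_FS_category} from the convex polygons $Q$ in the $W$-plane together with the interior contributions $\mu_P$, which is the dual polygon formalism of \cite{KKS} underlying the web construction of \cite{GMWinfrared}; the two sides therefore agree operation by operation. The passage from the finitely-many-critical-points model to the present situation, where $\Sigma_b\cap F_x$ is finite but $\pi_1$ of the path space is infinite, is handled by the categorical Novikov completion of \S\ref{sec_cat-pairs_lag}: Lemmas \ref{lem_bound1}--\ref{lem_bound_2} supply the support and finiteness bounds needed to form $\hat{R}_{(p,\gamma),(p',\gamma')}$, and these are the geometric counterpart of the convergence of the spectral-network generating functions.

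The principal obstacle is analytic, and is the one flagged throughout: one must establish compactness and transversality for the $3$d Fueter equation \eqref{eq_fueter} on $T^{*}C$ with Lagrangian boundary on $(\Sigma_b,F_x)$, so that the counts in \eqref{eq_diff} and the maps $\mu_P$ are well defined; this is exactly where the analysis initiated by Doan-Rezchikov \cite{DR} must be invoked and extended. A second, more specific difficulty is the localization claim of the second step, namely proving rigorously that holomorphic strips and Fueter maps in the hyperk\"ahler $T^{*}C$ collapse onto spectral-network webs on $C$, and controlling the dependence on the choice of hyperk\"ahler metric (semiflat versus the full Hitchin metric), whose difference should be immaterial by the expected metric-independence of the Fueter $2$-category. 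Absent these inputs the argument is conditional, but it reduces the conjecture to the foundational analysis of \eqref{eq_fueter} together with the already well-understood WKB interpretation of BPS solitons.
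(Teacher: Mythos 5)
First, a framing point: the statement you were asked to prove is a conjecture, and the paper does not prove it. What the paper offers in its place is a single paragraph of evidence, namely that the M-theory realization of the 2d-4d BPS states of $\mathcal{T}_{x,b}$ is by BPS M2-branes with boundary on $F_x \cup \Sigma_b$ projecting onto spectral networks on $C$ \cite[\S 7]{GMN_2d_4d}, and that these M2-branes are exactly the $J_\zeta$-holomorphic curves entering the \emph{first} step of the construction of $\mathrm{Hom}_{\mathrm{Ft}(T^*C)}(\Sigma_b,F_x)$. Your first two steps — the geometric dictionary ($\Sigma_b$ and $F_x$ as $I$-holomorphic Lagrangians meeting in the $n$ sheets over $x$, central charges given by periods of the Seiberg--Witten differential) and the identification of $J_\zeta$-holomorphic strips with spectral-network trajectories — coincide with this evidence, and are a reasonable unpacking of it.

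The genuine gap is your third step. You assert that the 3d $\zeta$-Fueter maps entering the differential \eqref{eq_diff} project to ``instanton webs bounded by the WKB trajectories'' and that these are ``precisely the configurations counted in the definition of $\mathrm{Br}(\mathcal{T}_{x,b})$,'' and you later recast the missing ingredient as an analytic localization/compactness problem to be handled by extending \cite{DR}. But the obstruction is conceptual, not merely analytic: the paper states explicitly that the spectral-network counterpart of Fueter solutions is unexplored and poses it as an open question — there is at present no description, even conjectural, of what structure on $C$ a solution of \eqref{eq_fueter} should project to, so there is nothing on the spectral-network side against which to match the Fueter counts. Moreover, $\mathrm{Br}(\mathcal{T}_{x,b})$ is defined via \cite{GMWinfrared, KKS} by counts of $\zeta$-solitons and $\zeta$-instantons of the finite-dimensional surface-defect theory itself, whereas the right-hand side of the conjecture counts strips and Fueter maps in $T^{*}C$; identifying these two counting problems beyond the generator level \emph{is} the content of the conjecture, so declaring that ``the remaining structure matches formally'' assumes exactly what is to be proven. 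In short, your proposal, like the paper, supports only the soliton/generator-level correspondence; the differential and the higher $A_\infty$-operations remain unmatched, and no amount of Fueter-equation analysis alone can close this until someone first says what the 2d-4d instanton webs of \cite{GMWinfrared} look like on the spectral-network side.
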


\begin{remark}
Conjectures \ref{conj_general_2d} and \ref{conj_general_2d_4d}
are of a similar nature. The only difference is that in Conjecture 
\ref{conj_general_2d_4d}, $C$ is a space of particular deformations of 
$\mathcal{T}_{x,b}$, whereas in Conjecture \ref{conj_general_2d}, 
$\mathcal{N}_{\mathcal{T}}$ is the universal space of deformations of 
$\mathcal{T}$.
\end{remark}

A non-trivial evidence for Conjecture \ref{conj_general_2d_4d}
is provided by the M-theory realization of the BPS states of 
$\mathcal{T}_{x,b}$ and their description by spectral networks.
Indeed, the M-theory realization of the BPS states of 
$\mathcal{T}_{x,b}$ is by BPS M2-branes with boundary on 
$F_x \cup \Sigma_b$, projecting onto spectral networks on $C$ ending 
at $x$ \cite[\S 7]{GMN_2d_4d}. BPS M2-branes are holomorphic disks with respect to a hyperkähler rotated complex structure prescribed by the phase of the central charge, and so are exactly the objects involved in the first step of the construction of  $\mathrm{Hom}_{\mathrm{Ft}(T^*C)}
(F_x,\Sigma_b)$. The second step involving solutions of the Fueter equations seems to be unexplored. It is an interesting question to find out what is the corresponding generalization in the description by spectral networks.

\section{DT invariants from holomorphic Floer theory}
\label{sec_dt}

After an overview of the geometry of Seiberg-Witten integrable systems in \S \ref{sec_sw}, we formulate in \S\ref{sec_main_hft_conj} our main conjecture relating holomorphic Floer theory and DT invariants occuring as BPS invariants of 4-dimensional $\cN=2$ 
field theories.
We present a heuristic derivation of this conjecture in \S\ref{sec_derivation}. Finally, we formulate further conjectures involving categories of line operators in \S\ref{sec_lines}.

\subsection{Seiberg-Witten integrable systems}
\label{sec_sw}

The low energy physics of a rank $r$ 4-dimensional
$\mathcal{N}=2$ 
field theory (without dynamical gravity) $\mathcal{T}$ is controlled by its Seiberg-Witten complex integrable system 
\[ \pi \colon M \longrightarrow B\,,\] 
where $B$ is the Coulomb branch of $\mathcal{T}$ on $\RR^4$ and $M$ is the Coulomb branch of 
$\mathcal{T}$ on $\RR^3 \times S^1$.
While $B$ is a complex manifold of dimension $r$, the space $M$
is a hyperk\"ahler manifold of complex dimension $2r$ \cite{SW_3d}. The low-energy effective theory of $\mathcal{T}$ on $\RR^3 \times S^1$ is the 
3d $\mathcal{N}=4$ sigma-model with target $M$.

We denote by $I$ the complex structure on $M$ in which $\pi$ is holomorphic and by $\Omega_I$ the corresponding $I$-holomorphic symplectic form.
As in \S\ref{sec_hsg}, we denote by $J$ and $K$ the complex structures such that $\Omega_I=\omega_J+i\omega_K$, and for every $\zeta \in \CC$, $|\zeta|=1$, we set 
$J_\zeta := (\Rea\, \zeta) J + (\Ima\, \zeta) K$.
The fibers $F_b:= \pi^{-1}(b)$ over points $b \in B$ in the complement of the discriminant locus $\Delta$ of $\pi$
are $r$-dimensional abelian varieties and $I$-holomorphic Lagrangian submanifolds of $(M,\Omega_I)$, see Figure \ref{fig6}.
The lattice of charges for states in the vacuum $b \in B \setminus \Delta$ is 
\[ \Gamma_b := \pi_2(M,F_b) \]
and for every $\gamma \in \Gamma_b$, we have a space 
$BPS_\gamma^b$ of BPS states of charge $\gamma$
in the vacuum $b$
\cite{MR1293681, MR1306869, SW_3d, GMN1, GMN2}.

\begin{figure}[ht!]
\centering
\includegraphics[width=40mm]{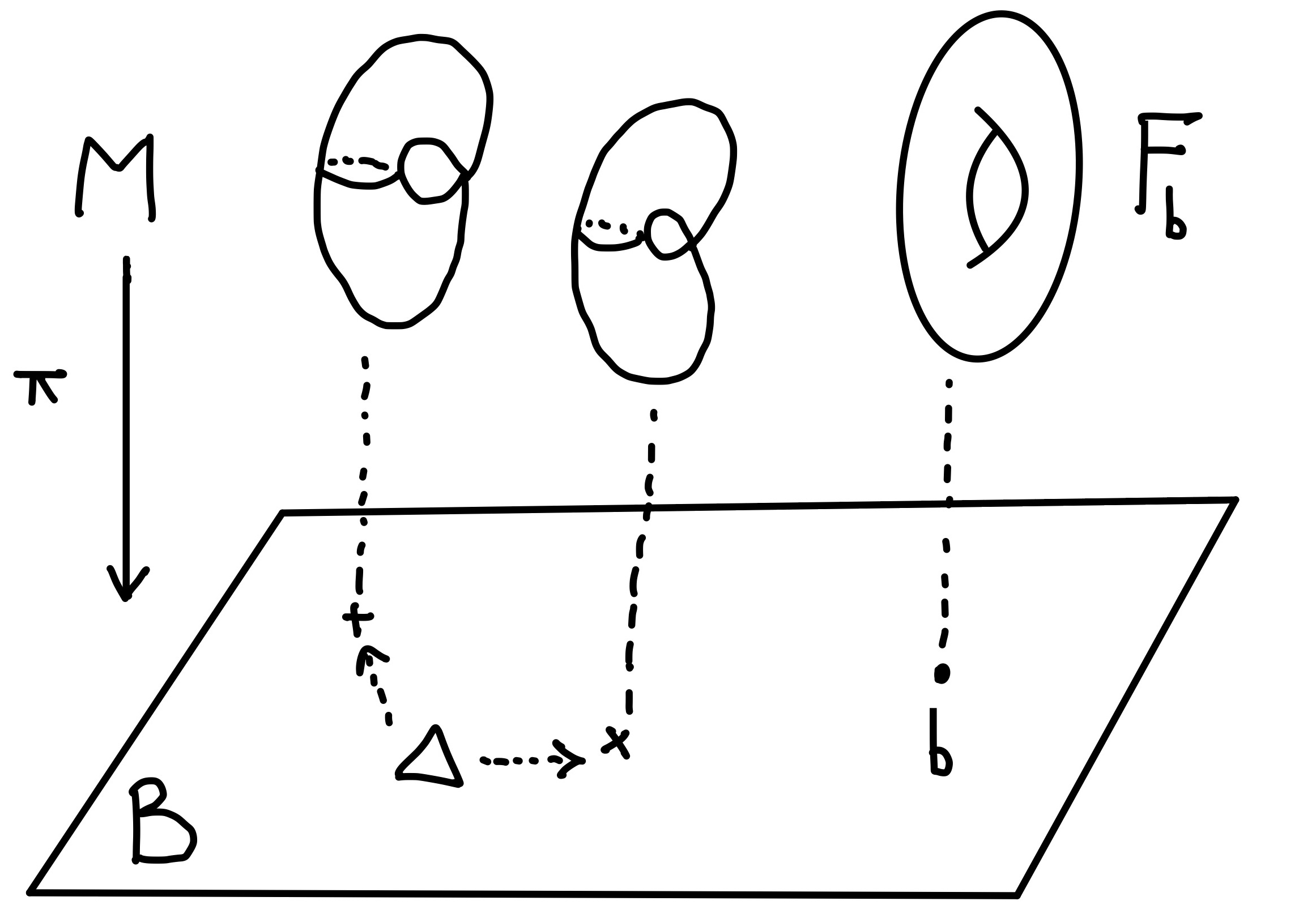}
\caption{A rank $1$ Seiberg-Witten integrable system $\pi: M \rightarrow B$, with discriminant locus $\Delta$. \label{fig6}}
\end{figure}

In the vacuum $b \in B \setminus \Delta$, the low-energy effective theory is a $U(1)^r$ gauge theory with electromagnetic charge lattice $H_1(F_b,\ZZ)$. 
The natural map $\Gamma_b \rightarrow H_1(F_b,\ZZ)$ is the projection of the lattice of charges on the lattice of electromagnetic charges and its kernel is the lattice of flavour charges.
As already mentioned in \S\ref{sec_intro_dt},
in many cases, there is an associated 3-dimensional Calabi-Yau (CY3) triangulated category $\mathcal{C}$, the base 
$B \setminus \Delta$ maps to the space of Bridgeland stability conditions on $\mathcal{C}$, the central charge at a point $b\in B\setminus \Delta$ is given by 
\[ Z_\gamma(b)=\int_\gamma \Omega_I\,, \]
and $BPS_\gamma^b$
is mathematically realized as a cohomological DT invariant of $\mathcal{C}$ counting $b$-stable objects of class $\gamma$. 

\begin{example}
When $\mathcal{T}$ is a class $\mathcal{S}$ theory, obtained by compactifying the 
6-dimensional $\mathcal{N}=(2,0)$ 
superconformal field theory of ADE type $G$
on a Riemann surface $C$ (possibly with punctures), the corresponding Seiberg-Witten complex integrable system
is closely related to the Hitchin integrable system on the moduli space of $G$-Higgs bundles on $C$. Typically, the Seiberg-Witten integrable system is a self-dual fiberwise finite quotient of an Hitchin integrable system \cite{Derry, GMN1, GMN2}. In particular, the base $B$ is isomorphic to $\CC^r$ as a complex manifold.   
\end{example}

\begin{example}
By considering a 4-dimensional $\mathcal{N}=2$ 
theory obtained by an appropriate compactification of a six-dimensional little string theory on $T^2$, one can obtain an elliptic K3 surface as Seiberg-Witten integrable system \cite{K3_coulomb}. In this case, the base $B$ is a complex projective line. 
\end{example}

From now on, we assume that $B$ is simply-connected. This assumption covers in particular the cases of Hitchin systems and K3 surfaces that we just described.

\subsection{DT invariants and holomorphic Floer theory}
\label{sec_main_hft_conj}
Let $D$ be a cigar geometry: $D$ is the closed unit disk
with polar coordinates $r\in [0,1]$, $\theta \in \RR/2\pi\ZZ$ and a metric of the form $ds^2=dr^2+f(r)d\theta^2$ for a function $f(r)$ such that there exists $0<\epsilon <<1$ such that $f(r)\sim r^2$ for $r<<\epsilon$ and $f(r)$ is equal to a constant $\rho$ for $\epsilon \leq r \leq 1$. The map $r: D \rightarrow [0,1]$ is a $S^1$-bundle for $r \neq 0$ and the $S^1$-fiber shrinks to a point at $r=0$, see Figure \ref{fig5}.

\begin{figure}[ht!]
\centering
\includegraphics[width=40mm]{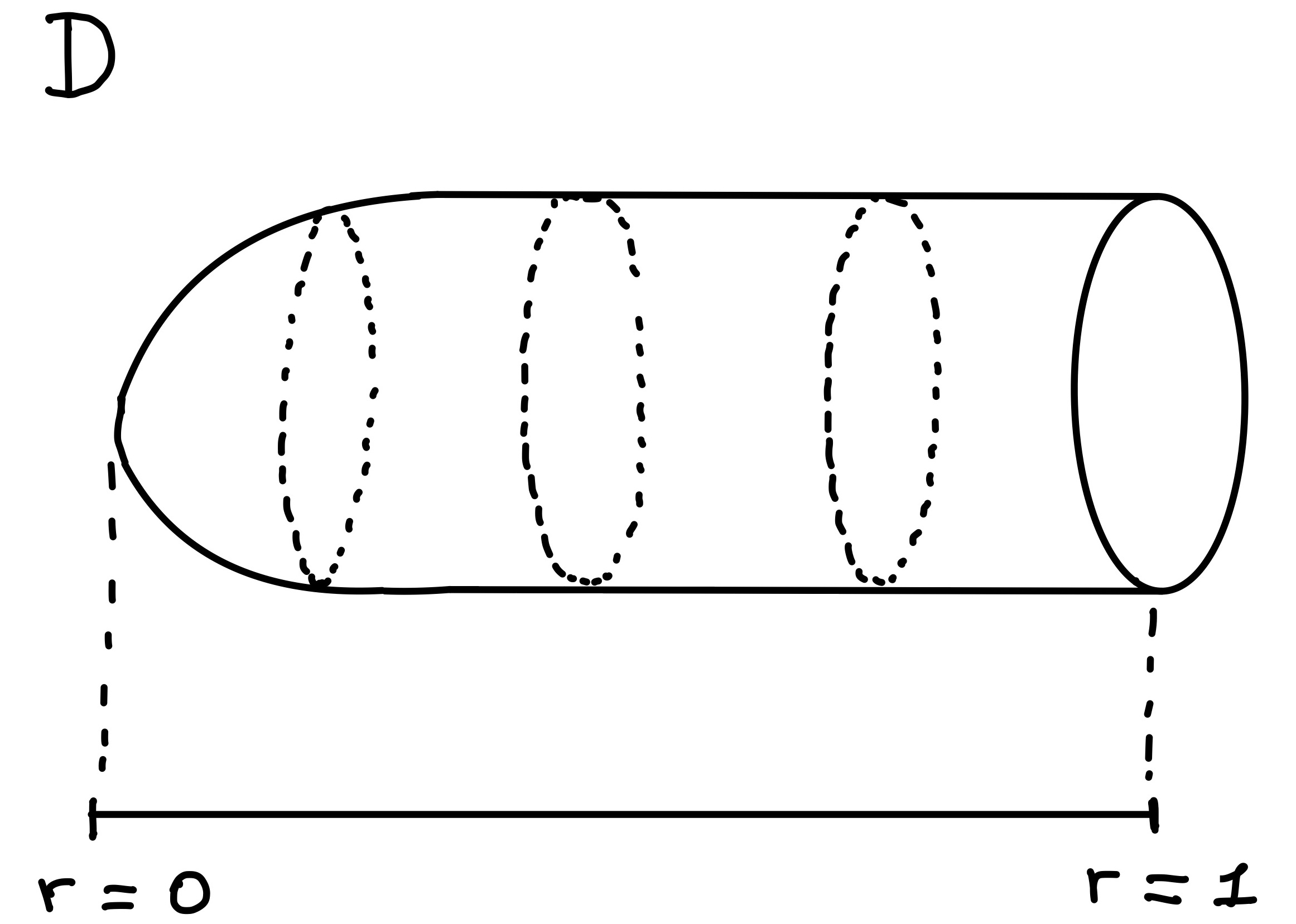}
\caption{The cigar $D$. \label{fig5}}
\end{figure}

We consider the 4d $\mathcal{N}=2$ theory $\mathcal{T}$ on $\RR^2 \times 
D$ with a topological twist along $D$ preserving half of the supersymmetries \cite[\S 3.1]{NW}.
In the limit where the radius of the circle $S^1$ in $D$ shrinks to zero, the low energy description is the 3d sigma model on $\RR^2 \times [0,1]$
with target $M$ and a boundary condition at $r=0$ defined by the tip of the cigar. It is argued by Nekrasov-Witten \cite[\S 3.1]{NW} that this boundary condition is defined by a $I$-holomorphic Lagrangian section $S$ of $\pi \colon M \rightarrow B$. For example, for $b \in B \setminus \Delta$, the fiber $F_b=\pi^{-1}(b)$ is parametrized by a complexification of the holonomy around $S^1$ of the 4d low energy $U(1)^r$ gauge field, and the shrinking of $S^1$ at the tip of the cigar forces this holonomy to vanish. When $\mathcal{T}$ is a class S theory, $M$ is a moduli space of Higgs bundles and $S$ is the Hitchin section of the Hitchin fibration \cite[\S 4.6]{NW}.

In Conjecture \ref{conj_main} below, we propose that the BPS spectrum of $\mathcal{T}$ at a point $b \in B \setminus \Delta$ can be recovered from holomorphic Floer theory of $(M,\Omega_I)$ for the pair of $I$-holomorphic Lagrangian submanifolds $S$
and $F_b$. To formulate this proposal, we first need some preliminary results on the space of paths $\mathcal{P}$ between $S$ ad $F_b$.
As $S$ is a section of $\pi$ and $F_b$ is a fiber, the intersection 
$S \cap F_b$ consists of a single point $p$, see Figure \ref{fig7}:
\[ S \cap F_b=\{p\}\,.\]
As in \S\ref{sec_hol_action}, we denote by $\cP_0$ the connected component of $\cP$ containing the constant path $p$.

\begin{figure}[ht!]
\centering
\includegraphics[width=40mm]{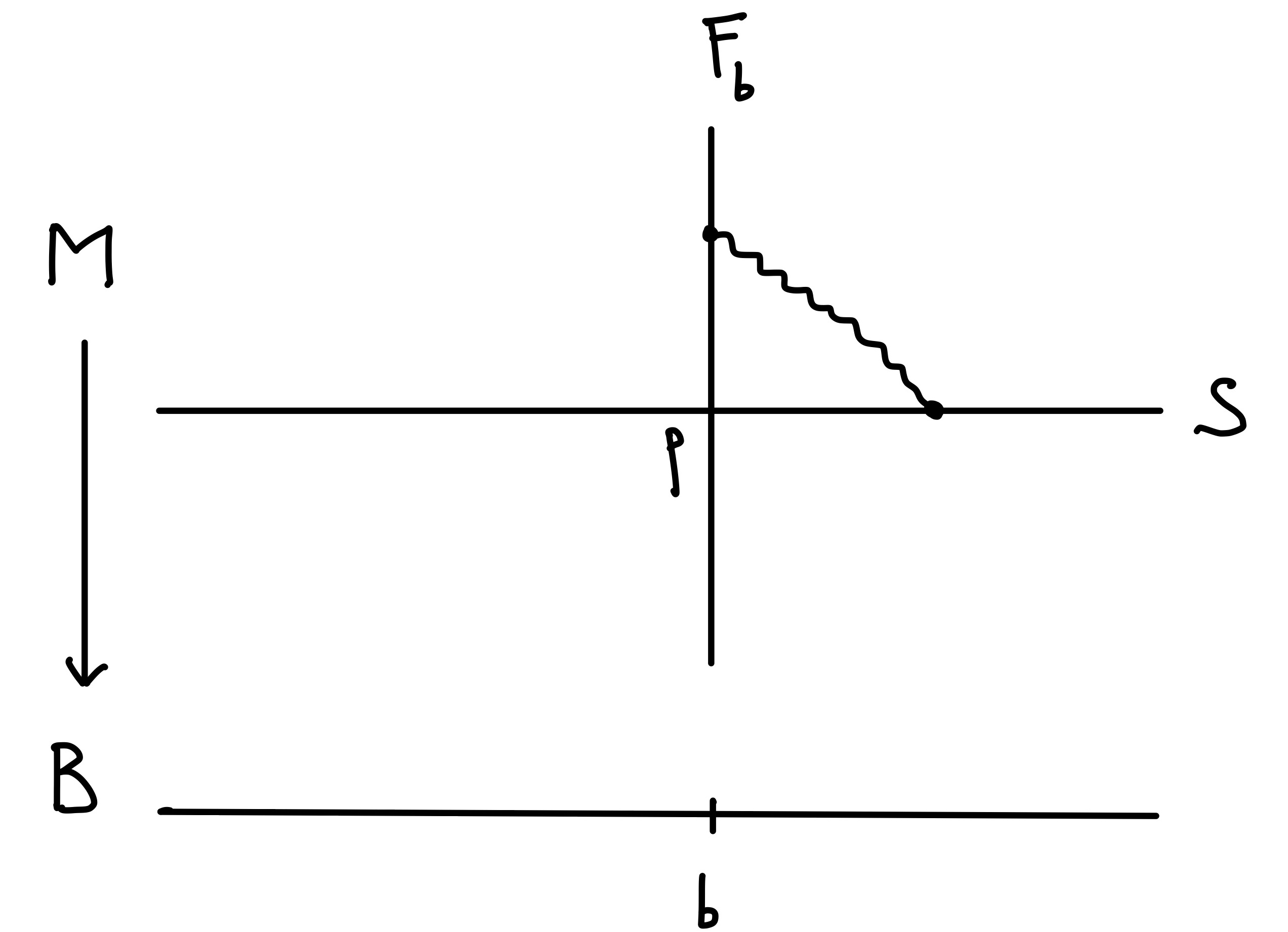}
\caption{A path with endpoints on the section $S$ and on the fiber $F_b$. \label{fig7}}
\end{figure}

\begin{lemma}
\label{lem_pi_1}
The fundamental group 
$\pi_1(\cP_0)$ is 
naturally isomorphic to the charge lattice 
$\Gamma_b=\pi_2(M, F_b)$.
\end{lemma}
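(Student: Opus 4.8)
The plan is to realize $\cP_0$ through path-space fibrations and thereby reduce the computation of $\pi_1(\cP_0)$ to the relative homotopy group $\pi_2(M,F_b)$ of the pair. First I would use evaluation at the starting endpoint, $\ev_0\colon \cP \to S$, $\mathfrak{p}\mapsto \mathfrak{p}(0)$, which is a Serre fibration whose fiber over the unique intersection point $p\in S\cap F_b$ is the based-at-$p$ path space $\mathcal{F}:=\{\mathfrak{p}\colon \mathfrak{p}(0)=p,\ \mathfrak{p}(1)\in F_b\}$. Taking the constant path $\mathrm{const}_p$ as basepoint (it lies in $\cP_0$ and in $\mathcal{F}$), the homotopy long exact sequence reads $\pi_2(S)\xrightarrow{\ \partial\ }\pi_1(\mathcal{F})\to\pi_1(\cP_0)\to\pi_1(S)$. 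Since $S$ is a section, $\pi|_S\colon S\to B$ is a diffeomorphism, so $\pi_1(S)=\pi_1(B)=0$ by the standing simple-connectedness assumption, and the sequence collapses to $\pi_2(S)\xrightarrow{\partial}\pi_1(\mathcal{F})\to\pi_1(\cP_0)\to 0$.

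The main computational input is the identification $\pi_1(\mathcal{F},\mathrm{const}_p)\cong\pi_2(M,F_b)=\Gamma_b$. This is the standard path-loop description: a based loop in $\mathcal F$ is a map $u\colon S^1\times[0,1]\to M$ with $u(s,0)=p$ for all $s$, $u(s,1)\in F_b$, and $u(s_0,\cdot)\equiv p$ at the basepoint $s_0\in S^1$; collapsing the bottom circle $S^1\times\{0\}$ to the point $p$ presents the domain as a $2$-disk whose boundary maps into $F_b$, i.e.\ exactly a representative of a class in $\pi_2(M,F_b,p)$. Equivalently, one runs the homotopy exact sequence of the endpoint fibration $\Omega M\to\mathcal F\xrightarrow{\ev_1}F_b$ and checks that it coincides termwise with the exact sequence of the pair $(M,F_b)$, giving $\pi_k(\mathcal F)\cong\pi_{k+1}(M,F_b)$. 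The resulting isomorphism is \emph{natural}, being given by reading off the relative homotopy class of the swept-out disk, which is what yields the naturality asserted in the statement. Feeding this in, I obtain that $\pi_1(\cP_0)$ is the cokernel of $\partial\colon\pi_2(S)\to\Gamma_b$.

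It therefore remains to prove $\partial=0$, and this is the hard part — the only place where more than $\pi_1(B)=0$ is used. Unwinding the connecting map (cleanest via the two-sided model $\Omega M\to\cP\xrightarrow{(\ev_0,\ev_1)}S\times F_b$, whose $\pi_2$-connecting map is $(\alpha,\beta)\mapsto i_*\alpha-\iota_*\beta$, combined with $\pi_2(F_b)=0$ because $F_b$ is a torus), one finds that $\partial$ factors as $\pi_2(S)\xrightarrow{i_*}\pi_2(M)\hookrightarrow\pi_2(M,F_b)$, where $i\colon S\hookrightarrow M$ is the inclusion. Thus $\partial=0$ is equivalent to the vanishing of $i_*\colon\pi_2(S)\to\pi_2(M)$, i.e.\ to the Lagrangian section contributing no nontrivial spheres to $M$. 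I would carry out the argument first under contractibility of the base: for Hitchin systems $B\cong\C^r$ is contractible, hence $S$ is contractible and $\pi_2(S)=0$, so $\partial=0$ and $\pi_1(\cP_0)\cong\Gamma_b$ follows immediately. For a general simply-connected $B$ with $\pi_2(B)\neq 0$ the same conclusion needs the extra geometric input that $i_*\colon\pi_2(S)\to\pi_2(M)$ vanishes; I expect verifying this to be the genuine obstacle, and I would isolate it as the one nontrivial point beyond the formal fibration bookkeeping.
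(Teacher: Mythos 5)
Your proposal is correct where it is definite, and it takes a genuinely different route from the paper's. The paper argues directly: given a loop $w$ in $\cP_0$ based at the constant path, it uses $\pi_1(S)=0$ to deform $w$ so that $w_x(0)\equiv p$ for all $x$, then reads the resulting square as a disk with boundary on $F_b$ and asserts that $[w]=0$ in $\pi_1(\cP_0)$ if and only if this disk vanishes in $\pi_2(M,F_b)$. Your fibration bookkeeping is the rigorous form of this argument, and it buys real precision: it produces the exact statement $\pi_1(\cP_0)\cong \pi_2(M,F_b)/\mathrm{im}(\partial)$ with $\partial$ factoring as $\pi_2(S)\xrightarrow{i_*}\pi_2(M)\hookrightarrow\pi_2(M,F_b)$, whereas the paper's argument hides this correction term. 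Indeed, the paper's contraction of the endpoint loop in $S$ is canonical only up to elements of $\pi_2(S)$, and the ``only if'' direction of its final assertion fails precisely by classes in $\mathrm{im}\bigl(\pi_2(S)\to\pi_2(M,F_b)\bigr)$: a null-homotopy of $w$ in $\cP_0$ may sweep a nontrivial sphere in $S$, so the associated disk class need only lie in that image rather than vanish.

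One correction to your last paragraph: the obstacle you isolate is not something one can hope to verify in the generality of the paper's standing hypothesis ($\pi_1(B)=0$); it genuinely fails there. In the paper's own elliptic K3 example, $B\cong\PP^1$ and $S$ is a section, a sphere whose class in $\pi_2(M)\cong H_2(M,\Z)$ (Hurewicz, as $M$ is simply connected) is nonzero since $[S]\cdot[F_b]=1$; moreover $\pi_2(M)\hookrightarrow\pi_2(M,F_b)$ because $\pi_2(F_b)=0$. Hence $\pi_1(\cP_0)\cong\Gamma_b/\Z[S]$ is a proper quotient of $\Gamma_b$, and Lemma \ref{lem_pi_1} is false as stated in that case. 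The correct general statement is the quotient formula you derived, and the lemma holds verbatim exactly when $i_*\pi_2(S)=0$, in particular when $\pi_2(B)=0$, e.g.\ for Hitchin/Seiberg--Witten systems of field theories where $B\cong\C^r$ is contractible --- the paper's main case, which your argument covers completely. So on this point your proof is more careful than the paper's, and it identifies a hypothesis that should be added to the statement.
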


\begin{proof}
Let $w$ be a loop in $\cP_0$ based at $p$:
\begin{align*} w: &[0,1] \longrightarrow \cP_0 \\
 x &\mapsto (w_x : y \in [0,1] \mapsto w_x(y))\,,
\end{align*}
and $w_0(y)=w_0(y)$ for all $y\in [0,1]$, $w_x(0) \in S$ and $w_x(1) \in F_b$ for all $x \in [0,1]$.
As $S \simeq B$ is assumed to be simply-connected, one can continuously deform $w$ to contract the loop 
$x \mapsto w_x(0)$ in $S$, and so assume from now on
without loss of generality that $w_x(0)=p$ for all $x \in 
[0,1]$. 

The map 
$y \mapsto (x \mapsto w_x(y))$
is a one parameter family of loops
in $M$, starting at $y=0$ with the constant loop at $p$, and ending with the loop $x \mapsto w_x(1)$ in $F_b$. In particular, we obtain a map from a disk 
to $M$ with boundary mapping to $F_b$, see Figure \ref{fig8}.
The loop $w$ is homotopically trivial in $\cP_0$ if and only if this disk is homotopically trivial in 
$M$ relatively to $F_b$.
\end{proof}

\begin{figure}[ht!]
\centering
\includegraphics[width=40mm]{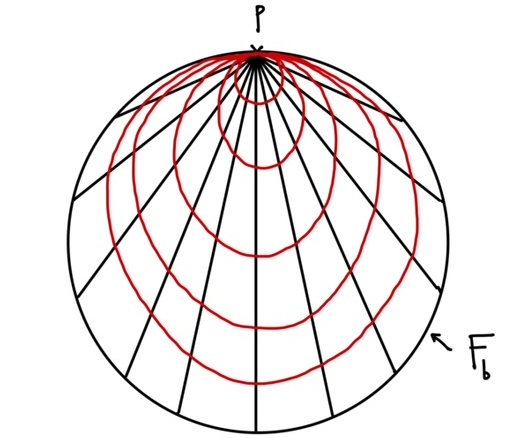}
\caption{The one-parameter family of paths $x \mapsto (y \mapsto w_x(y))$, and the one-parameter family of loops $y \mapsto (x \mapsto w_x(y))$, forming a disk with boundary on $F_b$. \label{fig8}}
\end{figure}

Let $\widetilde{\cP}_0 \rightarrow \cP_0$ be the universal cover of $\cP_0$. As the intersection $S \cap F_b$ consists of a single point $p$, the critical points of the holomorphic action functional on $\widetilde{\cP}_0$ form a $\pi_1(\cP_0)$-torsor by \S\ref{sec_inf_dim}. By Lemma \ref{lem_pi_1}, $\pi_1(\cP_0)=\Gamma_b$, and so we have a $\Gamma_b$-torsor of critical points.  
Fixing a reference critical point, we identify the set of critical points with $\Gamma_b$, and we denote by $(p,\gamma)$ the critical point corresponding to $\gamma \in \Gamma_b$. By \S \ref{sec_2d_bps_HFT}, holomorphic Floer theory attaches to any pair of critical points $(p,\gamma)$ and $(p,\gamma')$ a vector space $H((p,\gamma),(p,\gamma'))$ of 2d BPS states, see Figure \ref{fig9}.
By $\Gamma_b$-equivariance of the holomorphic action functional on $\mathcal{P}_0$,
the space $H((p,\gamma),(p,\gamma'))$ only depend on the difference $\gamma'-\gamma$. By definition, $H((p,0),(p,\gamma))$ is the cohomology of a complex generated by $J_\zeta$-holomorphic curves in $M$ with boundary on $S \cup F_b$, and with differential given by counts of solutions to the 3d $\zeta$-Fueter equation asymptotic to these holomorphic curves, where
\[\zeta=\frac{\int_\gamma \Omega_I}{|\int_\gamma \Omega_I|}\,. \]

\begin{figure}[ht!]
\centering
\includegraphics[width=40mm]{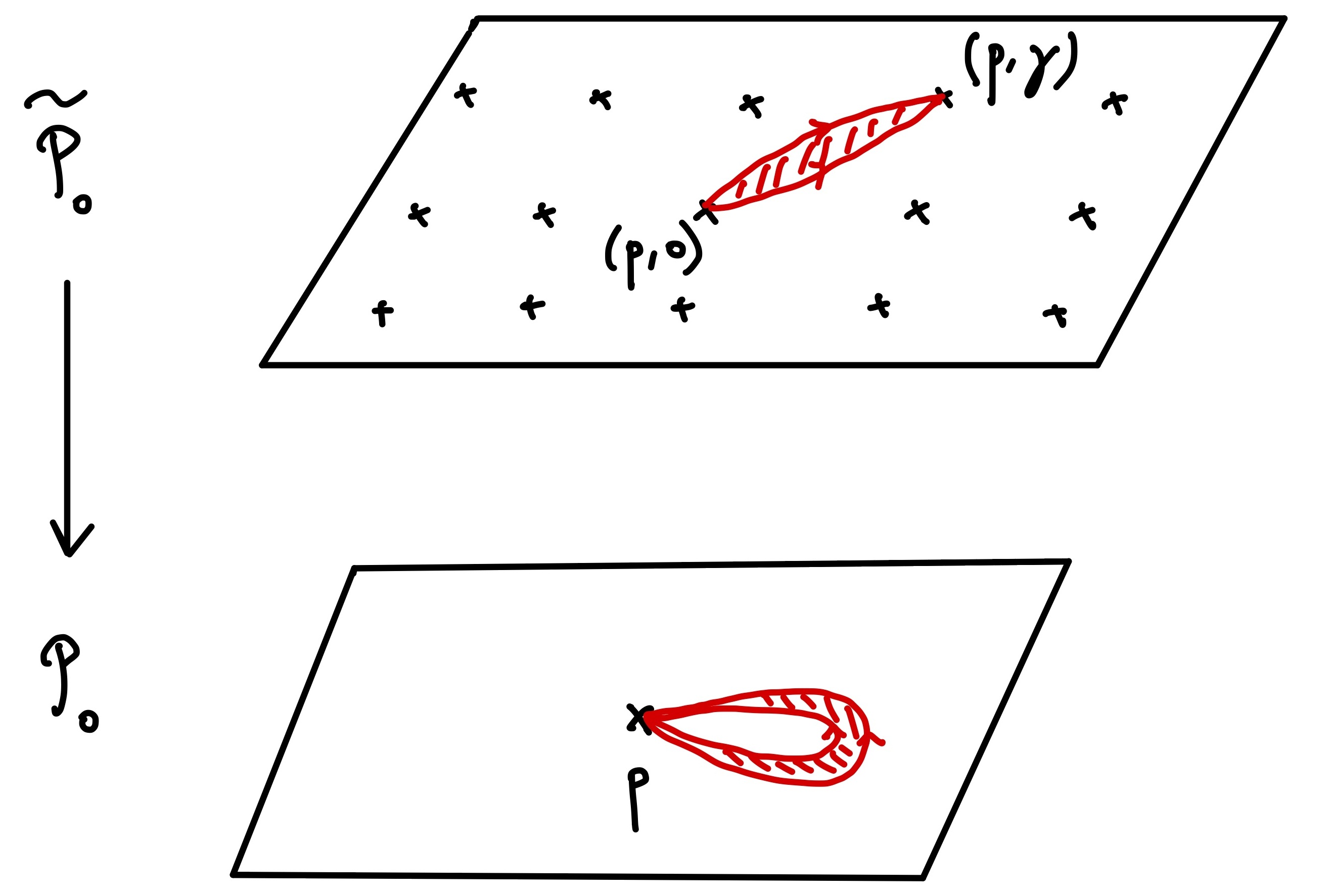}
\caption{The lattice of critical points of $W$ on $\widetilde{\cP}_0$.\label{fig9}}
\end{figure}

We can now state our main conjecture relating the BPS states of the 4d $\mathcal{N}=2$
theory $\mathcal{T}$ and the 2d BPS states of the holomorphic action functional.

\begin{conjecture} \label{conj_main}
Let $\mathcal{T}$ be a 4d-dimensional $\mathcal{N}=2$ field theory.
For every point $b \in B \setminus \Delta$ in the complement of the discriminant locus $\Delta$ of the Coulomb branch $B$ of $\mathcal{T}$, and for every charge $\gamma \in \Gamma_b$, the space $BPS_\gamma^b$
of BPS states of $\mathcal{T}$ of charge $\gamma$ in the vacuum $b$ is isomorphic to the space
$H((p,0),(p,\gamma))$
of 2d BPS states of the holomorphic action functional
on the space of paths between the holomorphic Lagrangian section $S$ and the holomorphic Lagrangian fiber $F_b$ of the Seiberg-Witten integrable system $(M,\Omega_I)$ of $\mathcal{T}$:
\[ BPS_\gamma^b \simeq H((p,0),(p,\gamma)) \,.\]
\end{conjecture}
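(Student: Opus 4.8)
The plan is to derive the isomorphism by a compactification argument in the spirit of Nekrasov--Witten \cite{NW}, reducing the $4$-dimensional BPS counting problem to the infinite-dimensional complex Morse theory of $(\widetilde{\cP}_0, W)$, and then to match the resulting algebraic structures term by term. First I would put $\mathcal{T}$ on $\RR^2 \times D$ with the topological twist along the cigar $D$ as in \S\ref{sec_main_hft_conj}, and take the limit in which the $S^1$-fiber of $D$ shrinks. In this limit one obtains a $2$-dimensional $\mathcal{N}=(2,2)$ theory on $\RR^2$ whose BPS solitons are the objects that physically engineer the $4$-dimensional BPS states. The content of the derivation is to identify this effective theory with the Landau--Ginzburg model attached to the holomorphic action functional $W$ on the space of paths between the tip boundary condition $S$ and the fiber boundary condition $F_b$: the tip of the cigar imposes the Lagrangian section $S$ by \cite[\S 3.1]{NW}, while the boundary at $r=1$, which selects the vacuum $b \in B \setminus \Delta$, imposes the fiber $F_b$ as the second boundary condition of the resulting $3$d sigma model on $\RR^2 \times [0,1]$.

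Granting this identification, the second step is to match vacua and central charges. By Lemma \ref{lem_pi_1} and the ensuing discussion, the critical points of $W$ form a $\Gamma_b$-torsor, so after fixing the basepoint $(p,0)$ the $2$d vacua are labelled by $\gamma \in \Gamma_b$ exactly as the $4$d charges are. Moreover the difference of critical values is
\[ W(p,\gamma) - W(p,0) = -\int_\gamma \Omega_I = -Z_\gamma(b)\,, \]
so the $2$d central charge coincides with the $4$d one, and in particular the phase
\[ \zeta = \frac{\int_\gamma \Omega_I}{\bigl|\int_\gamma \Omega_I\bigr|} \]
governing the $\zeta$-soliton and $\zeta$-instanton equations of \S\ref{sec_2d_bps_HFT} is precisely the phase of the $4$d central charge selecting BPS states of charge $\gamma$. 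I would then argue that, under the reduction, a $4$d BPS state of charge $\gamma$ maps to a $\zeta$-soliton interpolating between $(p,0)$ and $(p,\gamma)$, with the internal cohomological grading of $BPS_\gamma^b$ matching the grading of the Fueter-map differential defining $H((p,0),(p,\gamma))$.

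The decisive consistency check, and the conceptual heart of the argument, is to match the two wall-crossing structures. The Kontsevich--Soibelman wall-crossing formula for $BPS_\gamma^b$ \cite{kontsevich2008stability} should be recovered as the Cecotti--Vafa wall-crossing \cite{CV} of the effective $2$d theory. Here the essential point is the multivaluedness of $W$: passing to the universal cover $\widetilde{\cP}_0$ is what produces infinitely many vacua arranged as a $\Gamma_b$-torsor, and this is exactly the mechanism by which the infinite-dimensional Lie algebra $\mathfrak{g}_{DT}$ replaces $\mathfrak{gl}_n$. I would make this precise using Khan's $2$d categorical wall-crossing with twisted masses \cite[\S 3]{khan2021categorical}, which is designed for multivalued Landau--Ginzburg potentials and in which the formal structure of $4$d wall-crossing already appears; the support property of Lemma \ref{lem_bound1}, already matched to the DT support property in the remark following it, ensures that the two sides share the same domain of wall-crossing.

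The main obstacle is that none of these steps is currently a theorem: the identification is a physics derivation, and turning it into a proof requires both analytic and structural input that does not yet exist. Analytically, one must establish compactness and transversality for the $3$d Fueter equation well enough to define $H((p,0),(p,\gamma))$ rigorously, which is precisely the program begun by Doan--Rezchikov \cite{DR}; the infinite-dimensionality of $\widetilde{\cP}_0$ makes even the definition of the Fueter complex delicate. Structurally, one needs an independent comparison theorem asserting that the cohomological DT invariant of the CY3 category $\mathcal{C}$ — defined algebraically via stable objects — agrees with the cohomology of the Fueter complex; bridging these two a priori unrelated counting problems, rather than merely checking that they obey the same wall-crossing, is where I expect the genuine difficulty to lie.
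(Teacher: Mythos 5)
Your derivation is essentially the same as the paper's own heuristic argument in \S\ref{sec_derivation}: compactification of $\mathcal{T}$ on the cigar $D$ with the Nekrasov--Witten tip boundary condition giving the section $S$, reduction first to the 3d sigma model on $\RR^2\times[0,1]$ with boundary conditions $S$ and $F_b$, then to the 2d Landau--Ginzburg model on the path space with the holomorphic action functional as superpotential, and identification of the charge-$\gamma$ sector with the pair of critical points $(p,0)$, $(p,\gamma)$ via Lemma \ref{lem_pi_1}. Your additional consistency checks (matching of central charges and of the Kontsevich--Soibelman versus Cecotti--Vafa wall-crossing via Khan's twisted-mass formalism) and your closing caveats about the lack of analytic foundations accurately reflect points the paper makes elsewhere (\S\ref{sec:related_works} and the remarks following Conjecture \ref{conj_main}), and correctly acknowledge that this is a physics derivation of a conjecture rather than a proof.
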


When the 4d $\mathcal{N}=2$ theory $\mathcal{T}$ can be geometrically engineered by compactification of Type II string theory on a Calabi-Yau 3-fold, then the spaces of BPS states $BPS_\gamma^b$
have a geometric realization as DT invariants, and so in this case Conjecture \ref{conj_main} predicts a relation between the DT theory of a Calabi-Yau 3-fold and the holomorphic Floer theory of the complex integrable system $\pi \colon M \rightarrow B$.

\begin{remark}
In the decategorified limit, Conjecture \ref{conj_main}
predicts a relation between BPS indices, or DT invariants, and counts of $J_\zeta$-holomorphic curves in $M$ with boundary on $S \cup F_b$. In fact, it follows from the proof of Lemma \ref{lem_pi_1} that these $J_\zeta$-holomorphic curves can be naturally viewed as $J_\zeta$-holomorphic disks in $M$ with boundary on the fiber $F_b$. 

The counts of these disks are exactly the ``quantum corrections" appearing in mirror symmetry: for every $\zeta \in \CC$, $|\zeta|=1$, the map $\pi \colon M \rightarrow B$ is a torus fibration in special Lagrangian on $(M,\omega_\zeta, J_\zeta)$, and so the Strominger--Yau--Zaslow picture
of mirror symmetry \cite{syz} predicts that the mirror of $(M,\omega_\zeta, J_\zeta)$ is obtained by dualizing
the torus fibration away from the singular fibers, and using counts of $J_\zeta$-holomorphic disks with boundary on the torus fibers $F_b$ to extend the complex structure of the mirror across the singular fibers \cite{Fuk, GSaffine, KSaffine}. 
For a general Calabi-Yau manifold $M$, the wall-crossing behaviour of these counts of disks as a function of $b$ is based on the Lie algebra of vector fields preserving a holomorphic 
volume form on a complex torus \cite{KSaffine}. Geometrically, the wall-crossing formula guarantees the consistency of the gluing procedure producing the mirror $M$. 

The wall-crossing formula for DT invariants as a function of the stability is formally identical to the wall-crossing formula for counts of holomorphic disks when $M$ is holomorphic symplectic: in this case, the wall-crossing transformations are symplectic and not just volume preserving as in the general Calabi-Yau situation \cite{KSint, LinK3,lu2010instanton}. 
This suggests a correspondence between DT invariants and counts of $J_\zeta$-holomorphic disks in complex integrable system \cite{KSint}. Another reason to expect this correspondence is the work of Gaiotto-Moore-Neitzke \cite{GMN1,GMN2} in which the hyperk\"ahler metric on the Seiberg-Witten integrable system $M$ is reconstructed from ``quantum corrections" determined by the BPS spectrum. As $M$ is self-mirror \cite{Derry}, compatibility between the SYZ mirror construction and the Gaiotto-Moore-Neitzke description of the hyperk\"ahler metric requires a matching between counts of BPS states and counts of $J_\zeta$-holomorphic curves. 
Conjecture \ref{conj_main} provides a categorification of this correspondence and is formulated in a way leading to a natural physics derivation presented in the next section. It would be very interesting to find more direct mathematical evidence for it. 

Mathematically rigorous examples of correspondences between DT invariants and algebro-geometric versions of counts of holomorphic disks given by punctured Gromov--Witten invariants \cite{ACGSpunctured,arguz2020higher, GSintrinsic, GScanonical} have been obtained for quiver DT invariants \cite{arguz2022quiv,MR4157555, gross2010tropical, gross2010quivers, MR3004575} and for DT invariants of local $\PP^2$ \cite{bousseau2019takahashi,bousseau2019scattering, bousseau2022bps}. In such correspondences, tropicalizations in $B$ of holomorphic disks in $M$ are interpreted as ``attractor flow trees" in the space of stability conditions for DT invariants, see Figure 
\ref{fig10}
\cite{KSint, wang}. We view these results as evidence for Conjecture \ref{conj_main}.
\end{remark}

\begin{figure}[ht!]
\centering
\includegraphics[width=40mm]{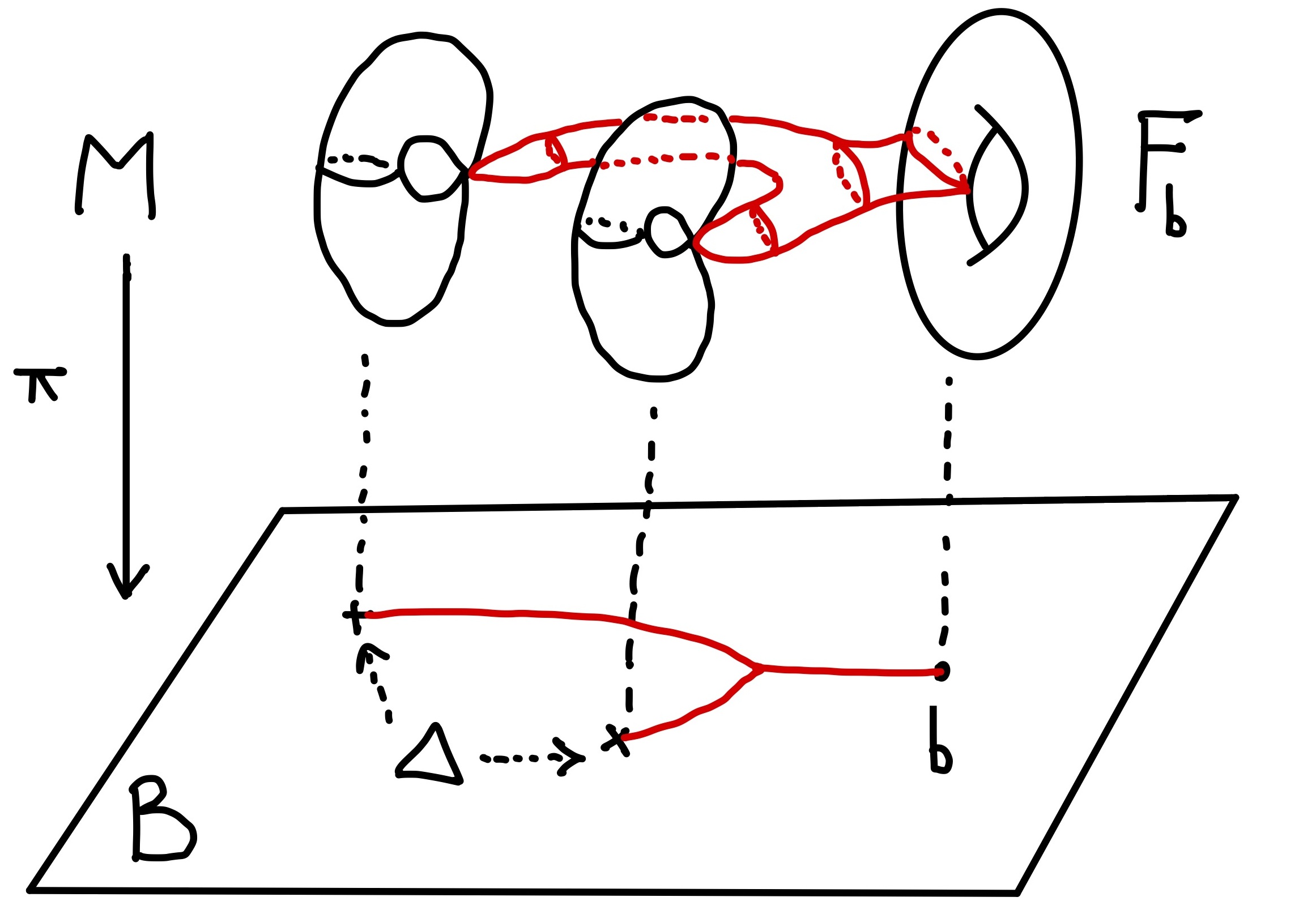}
\caption{Attractor flow trees as tropicalization of holomorphic disks with boundary on $F_b$. \label{fig10}}
\end{figure}

\begin{remark}
The complex structure $I$, the holomorphic symplectic form $\Omega_I$, and the $I$-holomorphic map $\pi \colon M \rightarrow B$ on $M$ are canonically attached to the 4d $\mathcal{N}=2$ 
theory $\mathcal{T}$. By contrast, the hyperk\"ahler metric depends on the radius $R$ of the circle $S^1$ on which one compactifies $\mathcal{T}$ to obtain a  3d $\mathcal{N}=4$ sigma model. The fibers of $\pi$ have size of order $1/R$. In the limit $R \rightarrow +\infty$, fibers collapse to the base: indeed, in this limit, the circle decompactifies, and the Coulomb branch $M$ of $\mathcal{T}$ on $\RR^3 \times S^1$ reduces to the Coulomb branch of $\mathcal{T}$ on $\RR^4$.
While one may expect that holomorphic Floer theory only depends on the $I$-holomorphic symplectic structure and not on the precise hyperk\"ahler metric, it will be probably easier to define and study it in the collapsing limit $R \rightarrow +\infty$.
\end{remark}

\begin{remark}\label{rem_2d_4d}
There is a formal 2d-4d analogy between Conjecture \ref{conj_general_2d} and Conjecture \ref{conj_main}.
In both cases, we have a space of parameters (the space $\mathcal{N}_{\mathcal{T}}$ of 2d
$\mathcal{N}=(2,2)$
theories or the Coulomb branch of vacua of the $4d$
$\mathcal{N}=2$ theory), a holomorphic symplectic manifold fibered over the space of parameters (the cotangent bundle 
$T^{*}\mathcal{N}_{\mathcal{T}} \rightarrow \mathcal{N}_{\mathcal{T}}$ or the Hitchin integrable system $\pi: M \rightarrow B$), and the space of (2d or 4d) BPS states at a given point of the parameter space is given by the holomorphic Floer theory of the holomorphic symplectic manifold for two holomorphic Lagrangian submanifolds given by the fiber of the fibration over this point and a section or multi-section (the spectral cover $\mathcal{L}$ or the Hitchin section $S$).
\end{remark}

\subsection{A physics derivation}\label{sec_derivation}
In this section, we give a heuristic derivation of Conjecture \ref{conj_main} by compactification on a cigar geometry. 
We start with the 4d $\mathcal{N}=2$
theory $\mathcal{T}$ on $\RR^2 \times D$, where $D$ is a cigar geometry reviewed at the beginning of \S\ref{sec_main_hft_conj}. As boundary condition at the end of the cigar, one imposes to the theory to approach the vacuum $b \in B \setminus \Delta$. Finally, we restrict this system to the sector of charge $\gamma \in \Gamma_b$ and we look for the space $V$ of half-supersymmetric states of lowest energy. As we assume that the system is in the vacuum $u$ at the end of the cigar, non-trivial excitations should be localized near the tip of the cigar. But near the tip of the cigar, $\RR^2 \times D$ looks like $\RR^4$. By definition, BPS states of charge $\gamma$ are the lowest energy half-supersymmetric states of the theory on $\RR^4$ in the sector of charge $\gamma$, and so one concludes that $V \simeq BPS_\gamma^b$. In this picture, the BPS particles are just sitting at the tip of the cigar. 

Now we take the limit where the circle $S^1$ in the cigar becomes very small. In this limit, the cigar $D$ reduces to the interval $[0,1]$. The low-energy effective description of $\mathcal{T}$ compactified on a circle is the 3-dimensional $\mathcal{N}=4$ sigma-model with target $M$. Therefore, our previous system reduces to a 3d $\mathcal{N}=4$ sigma model of target $M$ on $\RR^2 \times [0,1]$ with appropriate boundary conditions at the ends of $[0,1]$. 
At $r=0$, the boundary condition defined by the tip of the cigar is the section $S$, as reviewed in \ref{sec_main_hft_conj}. On the other hand, at $r=1$, the condition to be in the vacuum $b$ translates into the boundary condition imposed by the fiber $F_b$.

In the limit where the interval $[0,1]$ also becomes very small, we obtain a 2d $\mathcal{N}=(2,2)$
sigma model on $\RR^2$ with target the infinite-dimensional K\"ahler manifold $\mathcal{P}$ of paths $[0,1] \rightarrow M$ stretched between $S$ and $F_b$. 
The central charge
\[ Z_\gamma(b)=\int_\gamma \Omega_I \]
in the original 4d $\mathcal{N}=2$ supersymmetry algebra induces a holomorphic superpotential on $\cP$
which is exactly the holomorphic action functional $W$.
Therefore, we obtain as low energy effective theory on $\RR^2$ the 2d $\mathcal{N}=(2,2)$ Landau-Ginzburg model $(\mathcal{P},W)$. Moreover, the sector of charge $\gamma$ is obtained by considering only a pair of critical point of $W$ differing by $\gamma$. By definition of holomorphic Floer theory, the space $V$ of lowest energy supersymmetric states of this system is exactly the space $H((p,0),(p,\gamma))$.

In conclusion, Conjecture \ref{conj_main} follows from comparing the 4d $\mathcal{N}=2$
theory $\mathcal{T}$ on $\RR^2 \times D$ with the  2d $\mathcal{N}=(2,2)$ effective low energy on $\RR^2$ describing the theory in the limit where the cigar $D$ is small, see Figure 
\ref{fig11}

\begin{figure}[ht!]
\centering
\includegraphics[width=40mm]{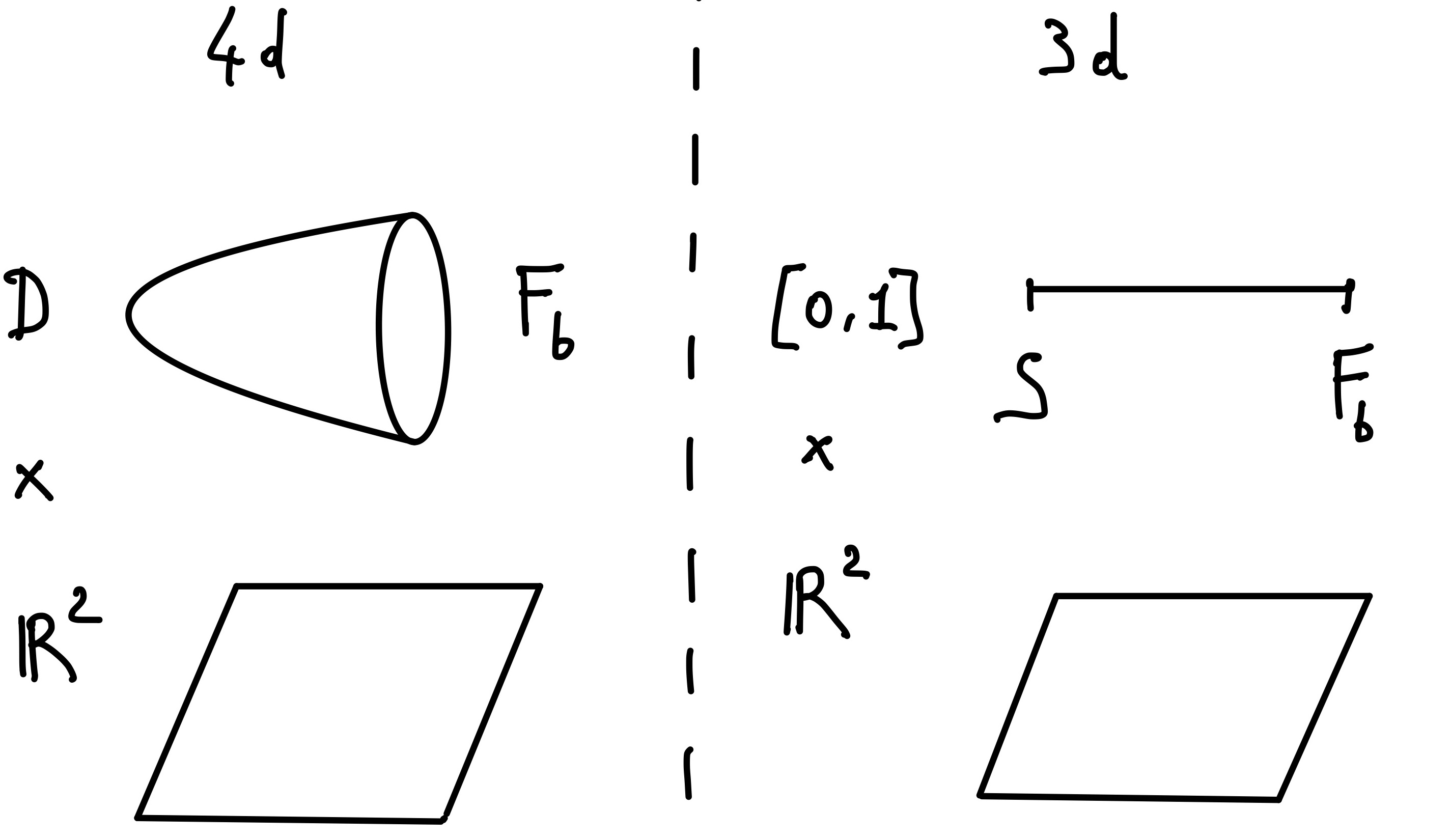}
\caption{Dimensional reduction of the cigar to an interval. \label{fig11}}
\end{figure}

\begin{remark} It is clear from this argument that the assumption that $\mathcal{T}$ is a theory without dynamical theory is necessary. 
Indeed, a 4d $\mathcal{N}=2$ supergravity theory, such as a Type II string compactification on a compact Calabi-Yau 3-fold, contains BPS black holes, and it is not possible to place a black hole at the tip of the cigar without destroying the local geometry.
\end{remark}

\begin{remark}
In the literature, compactification on a cigar geometry is often done in combination with a $\frac{1}{2}\Omega$-deformation along the $S^1$-rotations of the cigar \cite{NW}. It is essential for our purposes that the $\frac{1}{2}\Omega$-deformation parameter $\epsilon_1$ is set to $0$. For $\epsilon_1=0$, the possible supersymmetric boundary conditions at the boundary of the cigar are described by $I$-holomorphic Lagrangian submanifolds ((B,A,A)-branes) of $M$, and so we can take the fiber $F_b$ as boundary condition. By contrast, for $\epsilon_1 \neq 0$, possible supersymmetric boundary conditions are $J$-holomorphic Lagrangian submanifolds ((A,B,A)-brane)
and so the fiber $F_b$ cannot be used (correspondingly, the boundary condition corresponding to the tip of the cigar deforms the (B,A,A)-brane given by the section $S$ to a (A,B,A)-brane given by the locus of opers for class S theories \cite{NW}). On a related note,
it would be interesting to understand precisely the relation with the limit $\epsilon_1 \rightarrow 0$ of the twistorial topological string of Cecotti-Neitzke-Vafa which is defined using $\epsilon_1 \neq 0$ \cite{CNV}. Finally, Balasubramanian-Teschner consider in
\cite[\S 7.3]{BTsusy} a family of (A,B,A)-branes which in the limit $\epsilon_1 \rightarrow 0$ reduce to the (B,A,A)-branes given by the fibers $F_b$ and so it would also be interesting to see if this could be used to give a continuous deformation of our set-up (see also \cite[Eq 5.6]{HRS} and references there for further generalizations of these $(A,B,A)$ branes in terms of Fenchel-Nielsen type spectral coordinates).
\end{remark}

\subsection{Holomorphic Floer theory and categories of line operators}
\label{sec_lines}
In this section, we formulate conjectures about the categories attached by holomorphic Floer theory to the pairs of holomorphic Lagrangian submanifolds $(S,F_b)$ and $(S,S)$ in the Seiberg-Witten integrable system $M$.
It follows from the definition in \S\ref{sec_cat-pairs_lag} of the Hom categories in the Fueter 2-category that the category 
\[ C(S,F_b):=\Hom_{\mathrm{Ft}(M,\Omega_I)}(S,F_b)\,\]
admits for every $\zeta \in \CC^{*}$, $|\zeta|=1$, an exceptional collections $(X_\gamma(\zeta))_{\gamma \in \Gamma_b}$ indexed by charges $\gamma \in \Gamma_b$. In other words, $C(S,F_b)$ is a deformation of the derived category of coherent sheaves $\mathrm{D^bCoh}(BT)$ on the classifying stack of the torus $T=\Spec \CC[\Gamma_b] \simeq (\CC^{*})^n$.
The Hom spaces in $C(F,S_b)$ are constructed as in \S\ref{sec_cat-pairs_lag} from the complexes of BPS states $C((p,\gamma),(p,\gamma'))$.

Formally, $X_\gamma(\zeta)$ is the Lefschetz thimble of gradient flow lines of $\Rea(\zeta^{-1}W)$ emanating from the critical point $(p,\gamma)$
of the holomorphic action functional $W$ on the space of paths $\widetilde{\cP}_0$. The object $X_\gamma(\zeta)$ should be viewed as the $\zeta$-supersymmetric IR line defect of charge $\gamma$ in the vacuum $b$ \cite{GMNframed, cordova2014line}. Moreover, the expectation value $\langle X_\gamma(\zeta)\rangle$ of the IR line operator in the ``conformal limit" in the sense of \cite{gaiotto2014opers} should be formally given by the corresponding infinite-dimensional period integral:
\[ \langle X_\gamma(\zeta)\rangle=\int_{X_\gamma(\zeta)} e^{\Rea(\zeta^{-1}W)}vol \,,\]
where $vol$ is ``the volume form" on $\widetilde{\cP}_0$. When $\zeta$ varies, the (infinite) exceptional collection $(X_\gamma(\zeta))$ changes by mutations determined by the complexes of BPS states. In particular, the infinite-dimensional period integrals $\langle X_\gamma(\zeta)\rangle$ are solutions of the Riemann-Hilbert problems formulated in terms of BPS/DT invariants in \cite{BRH, gaiotto2014opers}.

Working with an appropriate wrapped version of the Fueter 2-category, one should be able to make sense of the category 
\[ C(S,S):=\Hom_{\mathrm{Ft}(M,\Omega_I)}(S,S)\,.\]
Similarly to what happens in the usual wrapped Fukaya category, one expects $C(S,S)$ to have for every $\zeta \in \CC$, $|\zeta|=1$,
an exceptional collection $(L_m(\zeta))_m$ indexed by the infinite set of intersection points between $S$ and a wrapped perturbation of $S$. Moreover, the composition of Hom categories in the Fueter 2-category should define a structure of monoidal category on $C(S,S)$, and a structure of module category over $C(S,S)$ on $C(S,F_b)$. Acting with $C(S,S)$ on the object $X_0(\zeta)$ of $C(S,F_b)$ obtained for $\gamma=0$ induces a functor 
\[ RG_{b}(\zeta): C(S,S) \longrightarrow C(S,F_b)\,.\]

\begin{conjecture} \label{conj_UV_lines}
The monoidal category $C(S,S)$ is the category of supersymmetric UV line operators of the 4d 
$\mathcal{N}=2$
theory $\mathcal{T}$ as in \cite{cordova2014line, GMNframed}.
\end{conjecture}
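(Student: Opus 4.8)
The plan is to establish Conjecture \ref{conj_UV_lines} by a physics derivation parallel to that of Conjecture \ref{conj_main} in \S\ref{sec_derivation}, supplemented by a matching of the resulting categorical structures against the framed-BPS description of line operators due to Gaiotto--Moore--Neitzke and Cordova--Neitzke \cite{GMNframed, cordova2014line}. The guiding principle is that $C(S,F_b)$ already realizes the \emph{IR} line defects $X_\gamma(\zeta)$ (as recorded in \S\ref{sec_lines}), so $C(S,S)$ should be the \emph{UV} monoidal category whose image under the functor $RG_b(\zeta)$ reproduces the IR expansion of a UV line operator in the vacuum $b$.

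First I would set up the geometry, extending \S\ref{sec_derivation}. There the brane $S$ arose from the tip of the cigar $D$ and $F_b$ from imposing the vacuum $b$ at the far boundary; reducing on the angular $S^1$ and then on the interval produced the Landau--Ginzburg model $(\cP, W)$ whose charge-$\gamma$ sector gives $H((p,0),(p,\gamma))$. A UV line operator of $\mathcal{T}$, extended along $\RR \subset \RR^2$ and wrapping the angular $S^1$ of the cigar at the tip $r=0$, survives the shrinking of $S^1$ as a defect that modifies the boundary condition $S$; in the Fueter picture this is precisely a $1$-endomorphism of the object $S$, i.e. an object of $C(S,S)$. Fusion of two such operators (stacking along the line) corresponds to the horizontal composition $C(S,S)\times C(S,S)\to C(S,S)$ of the Fueter $2$-category, hence to the monoidal structure; and inserting the operator at the tip while imposing $F_b$ at the far end is exactly the action of $C(S,S)$ on $X_0(\zeta)\in C(S,F_b)$, i.e. $RG_b(\zeta)$. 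This identifies $RG_b$ with the renormalization-group flow expressing a UV line operator as a combination of IR line defects, which is the content of the framed-BPS decomposition of \cite{GMNframed, cordova2014line}.

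Next I would test the conjecture against known structures in decreasing order of accessibility. On the decategorified level the statement reduces to the equality between framed BPS degeneracies (protected spin characters) of UV line operators and the graded Euler characteristics of the $RG_b$-images, together with the claim that the mutations of the exceptional collection $(X_\gamma(\zeta))$ under variation of $\zeta$ reproduce the framed wall-crossing of \cite{GMNframed}; these follow from the decategorified form of Conjecture \ref{conj_main} discussed in \S\ref{sec_lines}, and are compatible with the expected-value/Riemann--Hilbert description of $\langle X_\gamma(\zeta)\rangle$. At the level of the monoidal structure, for class $S$ theories of type $A_1$ the algebra of UV line operators is the quantized skein algebra of the surface $C$, so a sharper check is that the Grothendieck ring of $C(S,S)$ recovers this skein algebra; more generally the module structure of $C(S,S)$ on the deformation of $\mathrm{D^bCoh}(BT)$ realized by $C(S,F_b)$ should match the action of UV line operators on the IR charge torus in \cite{GMN1, GMN2}.

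The main obstacle is twofold, and is essentially the obstruction affecting the whole program. First, $C(S,S)$ is itself only conjecturally defined, through a \emph{wrapped} version of the Fueter $2$-category whose construction --- and in particular the monoidal structure coming from horizontal composition --- has not been made rigorous; any proof must therefore wait on the analytic foundations of the Fueter equation begun in \cite{DR} and on a definition of the wrapped Fueter category modeled on the wrapped Fukaya category. Second, even granting these foundations, matching the \emph{full} categorical data (the monoidal and $A_\infty$ structure, not merely framed BPS indices) with an intrinsic mathematical definition of the UV line-operator category of $\mathcal{T}$ is delicate, since such a definition is available only in special cases. I therefore expect the categorical monoidal equivalence, and its independence of the auxiliary hyperkähler data, to be the genuinely hard step, whereas the decategorified and class-$S$ skein-algebra checks should be within reach once the relevant moduli of Fueter solutions are understood.
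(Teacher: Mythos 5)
Conjecture \ref{conj_UV_lines} is stated in the paper as an open conjecture with no proof: \S\ref{sec_lines} supplies only the motivating structures (the conjectural wrapped Fueter 2-category making sense of $C(S,S)$, the monoidal structure from composition of Hom categories, the functor $RG_b(\zeta)$ acting on $X_0(\zeta)$, and the class-$S$ prediction that $C(S,S)$ monoidally categorifies $\C[M_B]$ with the $L_m(\zeta)$ lifting theta functions), and the paper's only actual derivation-style argument (\S\ref{sec_derivation}) concerns Conjecture \ref{conj_main}, not this one. Your proposal reproduces exactly this intended picture --- the UV line operator placed at the tip of the cigar and extended along $\RR\subset\RR^2$ (it sits at the tip point rather than ``wrapping'' the there-degenerate $S^1$) becoming a $1$-endomorphism of the boundary condition $S$, fusion giving the monoidal structure, the action on $X_0(\zeta)$ giving the UV-to-IR expansion of \cite{GMNframed, cordova2014line}, plus the skein/theta-function checks --- and correctly identifies that the absence of a rigorous definition of the wrapped Fueter 2-category and of the UV line-operator category is precisely what keeps this a conjecture, so your treatment is as complete as the paper's own.
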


When $\mathcal{T}$ is a class S theory, $M$ is a moduli space of Higgs bundles. Let $M_B$ be the corresponding character variety: $M_B$ is an affine algebraic variety, which as a complex manifold is isomorphic to $M$ with complex structure $J$. In this case, Conjecture \ref{conj_UV_lines} predicts that $C(S,S)$ is a monoidal categorification of the algebra of regular functions $\CC[M_B]$ of $M_B$. Moreover, the objects $L_m(\zeta)$ should be categorical lifts of the canonical basis of theta functions predicted by mirror symmetry \cite{GSintrinsic, GScanonical}.

\begin{conjecture}\label{conj_RG}
The functor $RG_{b}(\zeta): C(S,S) \longrightarrow C(S,F_b)$ is a categorification of the map in \cite{cordova2014line, GMNframed} expanding UV line operators in terms of IR line operators.
\end{conjecture}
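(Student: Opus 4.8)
The plan is to reduce Conjecture \ref{conj_RG} to a decategorified matching of indices and then to lift it, using the fact that $RG_b(\zeta)$ is by definition the action of the monoidal category $C(S,S)$ on the object $X_0(\zeta) \in C(S,F_b)$. First I would pass to Grothendieck groups. On the target side, the exceptional collection $(X_\gamma(\zeta))_{\gamma \in \Gamma_b}$ identifies $K_0(C(S,F_b))$ with the free module on $\Gamma_b$, which is exactly the lattice of IR line operator expectation values $X_\gamma$ of \cite{GMNframed, cordova2014line}; the monoidal action of $C(S,S)$ decategorifies to the (twisted) torus action on this lattice. On the source side, Conjecture \ref{conj_UV_lines} identifies $K_0(C(S,S))$ with the algebra of UV line operators, which for class $\mathcal{S}$ is the ring $\C[M_B]$.

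Granting these identifications, the core of the argument is to compute, for a UV line operator object $L \in C(S,S)$, the graded multiplicity spaces
\[ \mathcal{H}_{L,\gamma} := \Hom^{\bullet}_{C(S,F_b)}\big(X_\gamma(\zeta),\, RG_b(\zeta)(L)\big) \]
with which $RG_b(\zeta)(L)$ is assembled from the exceptional objects, and to establish
\[ [RG_b(\zeta)(L)] = \sum_{\gamma \in \Gamma_b} \chi(\mathcal{H}_{L,\gamma})\,[X_\gamma(\zeta)] \]
in $K_0(C(S,F_b))$. By the construction of the Hom categories in \S\ref{sec_cat-pairs_lag}, the $\mathcal{H}_{L,\gamma}$ are cohomologies of complexes generated by $J_\zeta$-holomorphic curves with boundary on $S \cup F_b$ and the interface cut out by $L$, with differential counting solutions of the 3d $\zeta$-Fueter equation; that is, they are the 2d BPS spaces of \S\ref{sec_2d_bps_HFT} in the presence of the insertion $L$. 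The decisive step is to identify these with the framed BPS Hilbert spaces of the 4d theory, so that the graded dimension of $\mathcal{H}_{L,\gamma}$ is the protected spin character and $\chi(\mathcal{H}_{L,\gamma}) = \overline{\Omega}(b,L,\gamma)$ reproduces the framed BPS degeneracies appearing as the coefficients of the \cite{cordova2014line, GMNframed} UV--IR map.

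This identification should follow from the cigar-compactification derivation of \S\ref{sec_derivation}, extended by inserting the UV line operator $L$ along the $\RR$-direction at the tip of the cigar. In the small-cigar limit this becomes a boundary condition for the $\mathcal{N}=(2,2)$ Landau--Ginzburg model $(\widetilde{\cP}_0, W)$ interpolating between $L$ and the Lefschetz-thimble object $X_0(\zeta)$, whose lowest-energy supersymmetric states are precisely the framed BPS states of $L$ at charge $\gamma$. Finally, the $\zeta$-independence of the triangulated category $C(S,F_b)$, together with the mutation of the collection $(X_\gamma(\zeta))$ as $\zeta$ crosses a phase $\zeta_\alpha$, should match the Kontsevich--Soibelman wall-crossing of framed BPS degeneracies \cite{kontsevich2008stability}, upgrading the decategorified equality to a statement about the functor itself rather than only its class.

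The main obstacle is foundational and twofold. First, one must rigorously construct the wrapped Fueter 2-category, its module structure, and the functor $RG_b(\zeta)$, all of which presently rest on analytic control of the 3d Fueter equation only begun in \cite{DR}. Second, and more seriously, one must give an independent holomorphic-Floer-theoretic definition of framed BPS states and prove it agrees with the physical one: this is a framed refinement of Conjecture \ref{conj_main} and is the true mathematical heart of the statement. I expect the decategorified $K$-theoretic matching to become tractable once Conjectures \ref{conj_main} and \ref{conj_UV_lines} are available, but the categorical lift --- controlling the entire filtration of $RG_b(\zeta)(L)$, and not merely its class --- to require genuinely new input on the moduli of Fueter solutions with line-operator boundary conditions.
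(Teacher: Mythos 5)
First, a point of order: the paper contains no proof of Conjecture~\ref{conj_RG}. It is stated as a conjecture and left open; the only support the paper offers is the heuristic cigar-compactification argument of \S\ref{sec_derivation} and the surrounding discussion in \S\ref{sec_lines}, where the objects $X_\gamma(\zeta)$ are interpreted as IR line defects and $C(S,S)$ as UV line operators. So there is no proof to compare yours against; what can be judged is whether your outline matches the route the paper envisions and whether it has gaps beyond those inherent in the conjecture's open status.

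On that score, your proposal is essentially the paper's own intended route, not a different one: decategorify via the exceptional collection $(X_\gamma(\zeta))_{\gamma\in\Gamma_b}$ to the lattice of IR line operators, identify the coefficients with framed BPS degeneracies, and obtain that identification by inserting the UV line operator along the worldline at the tip of the cigar in the derivation of \S\ref{sec_derivation} --- i.e.\ the framed analogue of the paper's heuristic for Conjecture~\ref{conj_main}. Your observation that mutation of $(X_\gamma(\zeta))$ as $\zeta$ crosses BPS phases should reproduce framed wall-crossing likewise matches the paper's remark that the exceptional collection mutates with $\zeta$. Where you go beyond the paper is in making the statement precise: packaging the expansion through the multiplicity spaces $\mathcal{H}_{L,\gamma}=\Hom^{\bullet}_{C(S,F_b)}\bigl(X_\gamma(\zeta),RG_b(\zeta)(L)\bigr)$ and demanding $[RG_b(\zeta)(L)]=\sum_\gamma \chi(\mathcal{H}_{L,\gamma})[X_\gamma(\zeta)]$ is a useful sharpening of what ``categorification of the UV--IR map'' should mean, and the paper does not spell this out.

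The genuine gap --- which, to your credit, you name yourself --- is that this is a research plan, not a proof, and it cannot presently be one. Every load-bearing step rests on statements that are themselves conjectural in the paper: Conjecture~\ref{conj_main} (the unframed BPS matching), Conjecture~\ref{conj_UV_lines} (identification of $C(S,S)$ with UV line operators), the existence of the wrapped Fueter 2-category with its monoidal and module structures defining $RG_b(\zeta)$, and, most seriously, a framed refinement of Conjecture~\ref{conj_main} identifying $\mathcal{H}_{L,\gamma}$ with framed BPS Hilbert spaces. That last step is new heuristic input --- the paper's cigar argument treats only the vacuum sector, and extending it to accommodate a line-operator insertion (with the attendant boundary conditions for Fueter solutions along an interface) is precisely the mathematical content of the conjecture being restated in physical language, not a reduction of it. So your proposal should be read as a faithful and somewhat sharpened articulation of the paper's intent, with the honest conclusion that the statement remains open at every level, from the analytic foundations of the Fueter equation upward.
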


In other words, Conjecture \ref{conj_RG} predicts that 
$RG_{b}(\zeta)$ should be a categorification of the map expressing theta functions in terms of the cluster-like coordinates $\langle X_\gamma(\zeta)\rangle$.

\bibliographystyle{plain}
\bibliography{bibliography}

\begin{thebibliography}{10}

\bibitem{wrapped}
Mohammed Abouzaid and Paul Seidel.
\newblock An open string analogue of {V}iterbo functoriality.
\newblock {\em Geom. Topol.}, 14(2):627--718, 2010.

\bibitem{ACGSpunctured}
Dan Abramovich, Qile Chen, Mark Gross, and Bernd Siebert.
\newblock Punctured logarithmic maps.
\newblock {\em arXiv preprint arXiv:2009.07720}, 2020.

\bibitem{arguz2022quiv}
H{\"u}lya Arg{\"u}z and Pierrick Bousseau.
\newblock Quivers and curves in higher dimension.
\newblock {\em In preparation}, 2022.

\bibitem{arguz2020higher}
H{\"u}lya Arg{\"u}z and Mark Gross.
\newblock The higher dimensional tropical vertex.
\newblock {\em arXiv preprint arXiv:2007.08347}, 2020.

\bibitem{AF}
Michael Atiyah.
\newblock New invariants of {$3$}- and {$4$}-dimensional manifolds.
\newblock In {\em The mathematical heritage of {H}ermann {W}eyl ({D}urham,
  {NC}, 1987)}, volume~48 of {\em Proc. Sympos. Pure Math.}, pages 285--299.
  Amer. Math. Soc., Providence, RI, 1988.

\bibitem{Aud}
Mich\`ele Audin.
\newblock Symplectic geometry in {F}robenius manifolds and quantum cohomology.
\newblock {\em J. Geom. Phys.}, 25(1-2):183--204, 1998.

\bibitem{BTsusy}
Aswin Balasubramanian and J\"{o}rg Teschner.
\newblock Supersymmetric field theories and geometric {L}anglands: the other
  side of the coin.
\newblock In {\em String-{M}ath 2016}, volume~98 of {\em Proc. Sympos. Pure
  Math.}, pages 79--105. Amer. Math. Soc., Providence, RI, 2018.

\bibitem{BF_DQ}
Kai Behrend and Barbara Fantechi.
\newblock Gerstenhaber and {B}atalin-{V}ilkovisky structures on {L}agrangian
  intersections.
\newblock In {\em Algebra, arithmetic, and geometry: in honor of {Y}u. {I}.
  {M}anin. {V}ol. {I}}, volume 269 of {\em Progr. Math.}, pages 1--47.
  Birkh\"{a}user Boston, Boston, MA, 2009.

\bibitem{bousseau2019takahashi}
Pierrick Bousseau.
\newblock A proof of {N}.\ {T}akahashi's conjecture for $(\mathbb{P}^2,{E})$
  and a refined sheaves/{G}romov-{W}itten correspondence.
\newblock {\em arXiv preprint arXiv:1909.02992}, 2019.

\bibitem{MR4157555}
Pierrick Bousseau.
\newblock The quantum tropical vertex.
\newblock {\em Geom. Topol.}, 24(3):1297--1379, 2020.

\bibitem{bousseau2019scattering}
Pierrick Bousseau.
\newblock Scattering diagrams, stability conditions, and coherent sheaves on
  {$\mathbb{P}^2$}.
\newblock {\em J. Algebraic Geom.}, 31(4):593--686, 2022.

\bibitem{bousseau2022bps}
Pierrick Bousseau, Pierre Descombes, Bruno~Le Floch, and Boris Pioline.
\newblock {BPS} {D}endroscopy on {L}ocal $\mathbb{P}^2$.
\newblock {\em arXiv preprint arXiv:2210.10712}, 2022.

\bibitem{J_DQ}
Christopher Brav, Vittoria Bussi, Delphine Dupont, Dominic Joyce, and
  Bal{\'a}zs Szendroi.
\newblock Symmetries and stabilization for sheaves of vanishing cycles.
\newblock {\em arXiv preprint arXiv:1211.3259}, 2012.

\bibitem{MR2373143}
Tom Bridgeland.
\newblock Stability conditions on triangulated categories.
\newblock {\em Ann. of Math. (2)}, 166(2):317--345, 2007.

\bibitem{BRH}
Tom Bridgeland.
\newblock Riemann-{H}ilbert problems from {D}onaldson-{T}homas theory.
\newblock {\em Invent. Math.}, 216(1):69--124, 2019.

\bibitem{BgeomDT}
Tom Bridgeland.
\newblock Geometry from {D}onaldson-{T}homas invariants.
\newblock In {\em Integrability, quantization, and geometry {II}. {Q}uantum
  theories and algebraic geometry}, volume 103 of {\em Proc. Sympos. Pure
  Math.}, pages 1--66. Amer. Math. Soc., Providence, RI, [2021] \copyright
  2021.

\bibitem{BS}
Tom Bridgeland and Ivan Smith.
\newblock Quadratic differentials as stability conditions.
\newblock {\em Publ. Math. Inst. Hautes \'{E}tudes Sci.}, 121:155--278, 2015.

\bibitem{BTL}
Tom Bridgeland and Valerio Toledano~Laredo.
\newblock Stability conditions and {S}tokes factors.
\newblock {\em Invent. Math.}, 187(1):61--98, 2012.

\bibitem{CNV}
Sergio Cecotti, Andrew Neitzke, and Cumrun Vafa.
\newblock Twistorial topological strings and a {$tt^*$} geometry for {$N=2$}
  theories in {$4d$}.
\newblock {\em Adv. Theor. Math. Phys.}, 20(2):193--312, 2016.

\bibitem{CV}
Sergio Cecotti and Cumrun Vafa.
\newblock On classification of {$N=2$} supersymmetric theories.
\newblock {\em Comm. Math. Phys.}, 158(3):569--644, 1993.

\bibitem{cordova2014line}
Clay C\'{o}rdova and Andrew Neitzke.
\newblock Line defects, tropicalization, and multi-centered quiver quantum
  mechanics.
\newblock {\em J. High Energy Phys.}, (9):099, front matter+65, 2014.

\bibitem{Derry}
Richard Derryberry.
\newblock Stacky dualities for the moduli of {H}iggs bundles.
\newblock {\em Adv. Math.}, 368:107152, 55, 2020.

\bibitem{DR}
Aleksander Doan and Semon Rezchikov.
\newblock Holomorphic {F}loer theory and the {F}ueter equation.
\newblock {\em arXiv:2210.12047}, 2022.

\bibitem{DS}
Simon Donaldson and Ed~Segal.
\newblock Gauge theory in higher dimensions, {II}.
\newblock In {\em Surveys in differential geometry. {V}olume {XVI}. {G}eometry
  of special holonomy and related topics}, volume~16 of {\em Surv. Differ.
  Geom.}, pages 1--41. Int. Press, Somerville, MA, 2011.

\bibitem{DTgauge}
Simon Donaldson and Richard Thomas.
\newblock Gauge theory in higher dimensions.
\newblock In {\em The geometric universe ({O}xford, 1996)}, pages 31--47.
  Oxford Univ. Press, Oxford, 1998.

\bibitem{Dub}
Boris Dubrovin.
\newblock Geometry of {$2$}{D} topological field theories.
\newblock In {\em Integrable systems and quantum groups ({M}ontecatini {T}erme,
  1993)}, volume 1620 of {\em Lecture Notes in Math.}, pages 120--348.
  Springer, Berlin, 1996.

\bibitem{FJR}
Huijun Fan, Tyler Jarvis, and Yongbin Ruan.
\newblock The {W}itten equation, mirror symmetry, and quantum singularity
  theory.
\newblock {\em Ann. of Math. (2)}, 178(1):1--106, 2013.

\bibitem{Flo1}
Andreas Floer.
\newblock Morse theory for {L}agrangian intersections.
\newblock {\em J. Differential Geom.}, 28(3):513--547, 1988.

\bibitem{Flo2}
Andreas Floer.
\newblock Witten's complex and infinite-dimensional {M}orse theory.
\newblock {\em J. Differential Geom.}, 30(1):207--221, 1989.

\bibitem{Fukcat}
Kenji Fukaya.
\newblock Morse homotopy, {$A_\infty$}-category, and {F}loer homologies.
\newblock In {\em Proceedings of {GARC} {W}orkshop on {G}eometry and {T}opology
  '93 ({S}eoul, 1993)}, volume~18 of {\em Lecture Notes Ser.}, pages 1--102.
  Seoul Nat. Univ., Seoul, 1993.

\bibitem{Fuk}
Kenji Fukaya.
\newblock Multivalued {M}orse theory, asymptotic analysis and mirror symmetry.
\newblock In {\em Graphs and patterns in mathematics and theoretical physics},
  volume~73 of {\em Proc. Sympos. Pure Math.}, pages 205--278. Amer. Math.
  Soc., Providence, RI, 2005.

\bibitem{MR1480992}
Kenji Fukaya and Yong-Geun Oh.
\newblock Zero-loop open strings in the cotangent bundle and {M}orse homotopy.
\newblock {\em Asian J. Math.}, 1(1):96--180, 1997.

\bibitem{gaiotto2014opers}
Davide Gaiotto.
\newblock Opers and {TBA}.
\newblock {\em arXiv preprint arXiv:1403.6137}, 2014.

\bibitem{GMN1}
Davide Gaiotto, Gregory Moore, and Andrew Neitzke.
\newblock Four-dimensional wall-crossing via three-dimensional field theory.
\newblock {\em Comm. Math. Phys.}, 299(1):163--224, 2010.

\bibitem{GMN_2d_4d}
Davide Gaiotto, Gregory Moore, and Andrew Neitzke.
\newblock Wall-crossing in coupled 2d-4d systems.
\newblock {\em J. High Energy Phys.}, (12):082, front matter + 166, 2012.

\bibitem{GMNframed}
Davide Gaiotto, Gregory Moore, and Andrew Neitzke.
\newblock Framed {BPS} states.
\newblock {\em Adv. Theor. Math. Phys.}, 17(2):241--397, 2013.

\bibitem{GMN2}
Davide Gaiotto, Gregory Moore, and Andrew Neitzke.
\newblock Wall-crossing, {H}itchin systems, and the {WKB} approximation.
\newblock {\em Adv. Math.}, 234:239--403, 2013.

\bibitem{GMWinfrared}
Davide Gaiotto, Gregory Moore, and Edward Witten.
\newblock Algebra of the {I}nfrared: {S}tring {F}ield {T}heoretic {S}tructures
  in {M}assive ${N}=(2, 2) $ {F}ield {T}heory {I}n {T}wo {D}imensions.
\newblock {\em arXiv preprint arXiv:1506.04087}, 2015.

\bibitem{gammage2022perverse2}
Benjamin Gammage and Justin Hilburn.
\newblock Betti {T}ate's thesis and the trace of perverse schobers.
\newblock {\em arXiv preprint arXiv:2210.06548}, 2022.

\bibitem{gammage2022perverse}
Benjamin Gammage, Justin Hilburn, and Aaron Mazel-Gee.
\newblock Perverse schobers and 3d mirror symmetry.
\newblock {\em arXiv preprint arXiv:2202.06833}, 2022.

\bibitem{gross2010quivers}
Mark Gross and Rahul Pandharipande.
\newblock Quivers, curves, and the tropical vertex.
\newblock {\em Portugaliae Mathematica}, 67(2):211--259, 2010.

\bibitem{gross2010tropical}
Mark Gross, Rahul Pandharipande, and Bernd Siebert.
\newblock The tropical vertex.
\newblock {\em Duke Mathematical Journal}, 153(2):297--362, 2010.

\bibitem{GSaffine}
Mark Gross and Bernd Siebert.
\newblock From real affine geometry to complex geometry.
\newblock {\em Ann. of Math. (2)}, 174(3):1301--1428, 2011.

\bibitem{GSintrinsic}
Mark Gross and Bernd Siebert.
\newblock Intrinsic mirror symmetry.
\newblock {\em arXiv preprint arXiv:1909.07649}, 2019.

\bibitem{GScanonical}
Mark Gross and Bernd Siebert.
\newblock The canonical wall structure and intrinsic mirror symmetry.
\newblock {\em Invent. Math.}, 229(3):1101--1202, 2022.

\bibitem{Hayd}
Andriy Haydys.
\newblock Fukaya-{S}eidel category and gauge theory.
\newblock {\em J. Symplectic Geom.}, 13(1):151--207, 2015.

\bibitem{MR2555940}
Sonja Hohloch, Gregor Noetzel, and Dietmar~A. Salamon.
\newblock Floer homology groups in hyperk\"ahler geometry.
\newblock In {\em New perspectives and challenges in symplectic field theory},
  volume~49 of {\em CRM Proc. Lecture Notes}, pages 251--261. Amer. Math. Soc.,
  Providence, RI, 2009.

\bibitem{HRS}
Lotte Hollands, Philipp R\"{u}ter, and Richard~J. Szabo.
\newblock A geometric recipe for twisted superpotentials.
\newblock {\em J. High Energy Phys.}, (12):Paper No. 164, 90, 2021.

\bibitem{K3_coulomb}
Kenneth Intriligator.
\newblock Compactified little string theories and compact moduli spaces of
  vacua.
\newblock {\em Phys. Rev. D (3)}, 61(10):106005, 6, 2000.

\bibitem{Jhol}
Dominic Joyce.
\newblock Holomorphic generating functions for invariants counting coherent
  sheaves on {C}alabi-{Y}au 3-folds.
\newblock {\em Geom. Topol.}, 11:667--725, 2007.

\bibitem{JS}
Dominic Joyce and Yinan Song.
\newblock A theory of generalized {D}onaldson-{T}homas invariants.
\newblock {\em Mem. Amer. Math. Soc.}, 217(1020):iv+199, 2012.

\bibitem{KKS}
Mikhail Kapranov, Maxim Kontsevich, and Yan Soibelman.
\newblock Algebra of the infrared and secondary polytopes.
\newblock {\em Adv. Math.}, 300:616--671, 2016.

\bibitem{kapranov2014perverse}
Mikhail Kapranov and Vadim Schechtman.
\newblock Perverse {S}chobers.
\newblock {\em arXiv preprint arXiv:1411.2772}, 2014.

\bibitem{kapranov2020perverse}
Mikhail Kapranov, Yan Soibelman, and Lev Soukhanov.
\newblock Perverse schobers and the {A}lgebra of the {I}nfrared.
\newblock {\em arXiv preprint arXiv:2011.00845}, 2020.

\bibitem{kapustin2005branes}
Anton Kapustin.
\newblock A-branes and noncommutative geometry.
\newblock {\em arXiv preprint hep-th/0502212}, 2005.

\bibitem{MR2771578}
Anton Kapustin and Lev Rozansky.
\newblock Three-dimensional topological field theory and symplectic algebraic
  geometry {II}.
\newblock {\em Commun. Number Theory Phys.}, 4(3):463--549, 2010.

\bibitem{MR2522724}
Anton Kapustin, Lev Rozansky, and Natalia Saulina.
\newblock Three-dimensional topological field theory and symplectic algebraic
  geometry. {I}.
\newblock {\em Nuclear Phys. B}, 816(3):295--355, 2009.

\bibitem{KS_DQ}
Masaki Kashiwara and Pierre Schapira.
\newblock Deformation quantization modules.
\newblock {\em Ast\'{e}risque}, (345):xii+147, 2012.

\bibitem{KKP}
Ludmil Katzarkov, Maxim Kontsevich, and Tony Pantev.
\newblock Hodge theoretic aspects of mirror symmetry.
\newblock In {\em From {H}odge theory to integrability and {TQFT}
  tt*-geometry}, volume~78 of {\em Proc. Sympos. Pure Math.}, pages 87--174.
  Amer. Math. Soc., Providence, RI, 2008.

\bibitem{khan2021categorical}
Ahsan Khan.
\newblock {\em Categorical Aspects of BPS States}.
\newblock PhD thesis, Rutgers The State University of New Jersey, School of
  Graduate Studies, 2021.

\bibitem{khan2020categorical}
Ahsan Khan and Gregory Moore.
\newblock Categorical {W}all-{C}rossing in {L}andau-{G}inzburg {M}odels.
\newblock {\em arXiv preprint arXiv:2010.11837}, 2020.

\bibitem{MR1855264}
Maxim Kontsevich.
\newblock Deformation quantization of algebraic varieties.
\newblock {\em Lett. Math. Phys.}, 56(3):271--294, 2001.
\newblock EuroConf\'erence Mosh\'e Flato 2000, Part III (Dijon).

\bibitem{KSaffine}
Maxim Kontsevich and Yan Soibelman.
\newblock Affine structures and non-{A}rchimedean analytic spaces.
\newblock In {\em The unity of mathematics}, volume 244 of {\em Progr. Math.},
  pages 321--385. Birkh\"auser Boston, Boston, MA, 2006.

\bibitem{kontsevich2008stability}
Maxim Kontsevich and Yan Soibelman.
\newblock Stability structures, motivic {D}onaldson-{T}homas invariants and
  cluster transformations.
\newblock {\em arXiv preprint arXiv:0811.2435}, 2008.

\bibitem{KSint}
Maxim Kontsevich and Yan Soibelman.
\newblock Wall-crossing structures in {D}onaldson-{T}homas invariants,
  integrable systems and mirror symmetry.
\newblock In {\em Homological mirror symmetry and tropical geometry}, volume~15
  of {\em Lect. Notes Unione Mat. Ital.}, pages 197--308. Springer, Cham, 2014.

\bibitem{KS_HFT}
Maxim Kontsevich and Yan Soibelman.
\newblock Holomorphic {F}loer {T}heory: brane quantization, exponential
  integrals and resurgence.
\newblock {\em In preparation}, 2022.

\bibitem{LinK3}
Yu-Shen Lin.
\newblock Correspondence theorem between holomorphic discs and tropical discs
  on {K}3 surfaces.
\newblock {\em J. Differential Geom.}, 117(1):41--92, 2021.

\bibitem{lu2010instanton}
Wenxuan Lu.
\newblock {I}nstanton {C}orrection, {W}all {C}rossing {A}nd {M}irror {S}ymmetry
  {O}f {H}itchin's {M}oduli {S}paces.
\newblock {\em arXiv preprint arXiv:1010.3388}, 2010.

\bibitem{lurie}
Jacob Lurie.
\newblock On the classification of topological field theories.
\newblock In {\em Current developments in mathematics, 2008}, pages 129--280.
  Int. Press, Somerville, MA, 2009.

\bibitem{M_DQ}
Borislav Mladenov.
\newblock Formality of differential graded algebras and complex lagrangian
  submanifolds.
\newblock {\em arXiv preprint arXiv:2007.05498}, 2020.

\bibitem{NW}
Nikita Nekrasov and Edward Witten.
\newblock The {O}mega deformation, branes, tntegrability and {L}iouville
  theory.
\newblock {\em J. High Energy Phys.}, (9):092, i, 82, 2010.

\bibitem{P_DQ}
Jonathan Pridham.
\newblock Quantisation of derived lagrangians.
\newblock {\em arXiv preprint arXiv:1607.01000}, 2016.

\bibitem{MR3004575}
Markus Reineke and Thorsten Weist.
\newblock Refined {GW}/{K}ronecker correspondence.
\newblock {\em Math. Ann.}, 355(1):17--56, 2013.

\bibitem{RW}
Lev Rozansky and Edward Witten.
\newblock Hyper-{K}\"{a}hler geometry and invariants of three-manifolds.
\newblock {\em Selecta Math. (N.S.)}, 3(3):401--458, 1997.

\bibitem{Hodge_nc}
G\"{u}nter Schwarz.
\newblock {\em Hodge decomposition---a method for solving boundary value
  problems}, volume 1607 of {\em Lecture Notes in Mathematics}.
\newblock Springer-Verlag, Berlin, 1995.

\bibitem{MR1293681}
Nathan Seiberg and Edward Witten.
\newblock Electric-magnetic duality, monopole condensation, and confinement in
  {$N=2$} supersymmetric {Y}ang-{M}ills theory.
\newblock {\em Nuclear Phys. B}, 426(1):19--52, 1994.

\bibitem{MR1306869}
Nathan Seiberg and Edward Witten.
\newblock Monopoles, duality and chiral symmetry breaking in {$N=2$}
  supersymmetric {QCD}.
\newblock {\em Nuclear Phys. B}, 431(3):484--550, 1994.

\bibitem{SW_3d}
Nathan Seiberg and Edward Witten.
\newblock Gauge dynamics and compactification to three dimensions.
\newblock In {\em The mathematical beauty of physics ({S}aclay, 1996)},
  volume~24 of {\em Adv. Ser. Math. Phys.}, pages 333--366. World Sci. Publ.,
  River Edge, NJ, 1997.

\bibitem{Seid1}
Paul Seidel.
\newblock Vanishing cycles and mutation.
\newblock In {\em European {C}ongress of {M}athematics, {V}ol. {II}
  ({B}arcelona, 2000)}, volume 202 of {\em Progr. Math.}, pages 65--85.
  Birkh\"{a}user, Basel, 2001.

\bibitem{Seid2}
Paul Seidel.
\newblock {\em Fukaya categories and {P}icard-{L}efschetz theory}.
\newblock Zurich Lectures in Advanced Mathematics. European Mathematical
  Society (EMS), Z\"{u}rich, 2008.

\bibitem{Smith1}
Ivan Smith.
\newblock Quiver algebras as {F}ukaya categories.
\newblock {\em Geom. Topol.}, 19(5):2557--2617, 2015.

\bibitem{SV}
Jake Solomon and Misha Verbitsky.
\newblock Locality in the {F}ukaya category of a hyperk\"{a}hler manifold.
\newblock {\em Compos. Math.}, 155(10):1924--1958, 2019.

\bibitem{syz}
Andrew Strominger, Shing-Tung Yau, and Eric Zaslow.
\newblock Mirror symmetry is {$T$}-duality.
\newblock {\em Nuclear Phys. B}, 479(1-2):243--259, 1996.

\bibitem{FS_rig}
Donghao Wang.
\newblock The {C}omplex {G}radient {F}low equation and {S}eidel's {S}pectral
  {S}equence.
\newblock {\em arXiv preprint arXiv:2209.02810}, 2022.

\bibitem{wang}
Qiang Wang.
\newblock Wall-crossing structures and application to {${\rm SU}(3)$}
  {S}eiberg-{W}itten integrable systems.
\newblock {\em J. Geom. Phys.}, 157:103834, 15, 2020.

\bibitem{WMorse}
Edward Witten.
\newblock Supersymmetry and {M}orse theory.
\newblock {\em J. Differential Geometry}, 17(4):661--692 (1983), 1982.

\bibitem{Wanalytic}
Edward Witten.
\newblock Analytic continuation of {C}hern-{S}imons theory.
\newblock In {\em Chern-{S}imons gauge theory: 20 years after}, volume~50 of
  {\em AMS/IP Stud. Adv. Math.}, pages 347--446. Amer. Math. Soc., Providence,
  RI, 2011.

\end{thebibliography}

\end{document}